\theoremstyle{plain}
\newtheorem{thm}{Theorem}[section]
\newtheorem{prop}[thm]{Proposition}
\newtheorem{lem}[thm]{Lemma}
\theoremstyle{definition}
\newtheorem{dfn}[thm]{Definition}
\newtheorem{exa}[thm]{Example}
\newtheorem{rem}[thm]{Remark}
\newtheorem{question}[thm]{Question}
\newtheorem*{notation}{Notation}
\newcommand{\N}{\mathbb{N}}
\newcommand{\Z}{\mathbb{Z}}
\newcommand{\Q}{\mathbb{Q}}
\newcommand{\R}{\mathbb{R}}
\newcommand{\C}{\mathbb{C}}
\renewcommand{\P}{\mathbb{P}}
\newcommand{\OO}{\mathcal{O}}
\DeclareMathOperator{\codim}{codim}
\DeclareMathOperator{\Exc}{Exc}
\DeclareMathOperator{\Supp}{Supp}
\DeclareMathOperator{\Id}{Id}
\DeclareMathOperator{\Aut}{Aut}
\DeclareMathOperator{\vol}{vol}
\DeclareMathOperator{\Fix}{Fix}
\DeclareMathOperator{\Bl}{Bl}
\DeclareMathOperator{\Locus}{Locus}
\DeclareMathOperator{\ord}{ord}
\DeclareMathOperator{\Pic}{Pic}
\DeclareMathOperator{\Div}{Div}
\DeclareMathOperator{\Ima}{Im}
\DeclareMathOperator{\GL}{GL}
\DeclareMathOperator{\SL}{SL}
\DeclareMathOperator{\Hol}{Hol}
\DeclareMathOperator{\Sp}{Sp}
\DeclareMathOperator{\Spec}{Spec}
\DeclareMathOperator{\Hilb}{Hilb}
\DeclareMathOperator{\Sym}{Sym}
\DeclareMathOperator{\Amp}{\mathrm{Amp}}
\DeclareMathOperator{\Nef}{\mathrm{Nef}}
\DeclareMathOperator{\Pseff}{\overline{\mathrm{Eff}}}
\DeclareMathOperator{\Movb}{\overline{\mathrm{Mov}}}
\DeclareMathOperator{\NE}{\mathrm{NE}}
\DeclareMathOperator{\NEb}{\overline{\mathrm{NE}}}
\DeclareMathOperator{\BgCn}{\mathrm{Big}}
\DeclareMathOperator{\Mob}{Mob}
\DeclareMathOperator{\bMob}{bMob}
\DeclareMathOperator{\bMobc}{\overline{\mathrm{bMob}}}
\DeclareMathOperator{\Pos}{\mathrm{Pos}}
\DeclareMathOperator{\Posb}{\overline{\mathrm{Pos}}}
\DeclareMathOperator{\sbs}{\mathbf{B}} 
\DeclareMathOperator{\dbs}{\mathbf{B}_{--}} 
\DeclareMathOperator{\abs}{\mathbf{B}_{+}} 
\begin{document}

    \title[MMP for Enriques pairs and singular Enriques varieties]{MMP for Enriques pairs \\ and singular Enriques varieties}
    
    \author[F.\ A.\ Denisi]{Francesco Antonio Denisi}
    \address{\foreignlanguage{german}{Fachrichtung Mathematik, Campus, Geb\"aude E2.4, Universit\"at des Saarlandes, 66123 Saarbr\"ucken, Germany}}
    \email{denisi@math.uni-sb.de}

    \author[Á.\ D.\ Ríos Ortiz]{Ángel David Ríos Ortiz}
    \address{Universit\'e Paris-Saclay, CNRS, Laboratoire de Math\'ematiques d'Orsay, B\^at.\ 307, 91405 Orsay, France}
    \email{angel-david.rios-ortiz@universite-paris-saclay.fr}
    
    \author[N.\ Tsakanikas]{Nikolaos Tsakanikas}
    \address{Institut de Math\'ematiques (CAG), \'Ecole Polytechnique F\'ed\'erale de Lausanne (EPFL), 1015 Lausanne, Switzerland}
    \email{nikolaos.tsakanikas@epfl.ch}
    
    \author[Z.\ Xie]{Zhixin Xie}
    \address{Institut \'Elie Cartan de Lorraine, Universit\'e de Lorraine, 54506 Nancy, France}
    \email{zhixin.xie@univ-lorraine.fr}
    
    \thanks{
        2020 \emph{Mathematics Subject Classification}: Primary: 14E30, 14J42; Secondary: 14J28, 14L30. \newline
        \indent \emph{Keywords}: Enriques manifolds, irreducible holomorphic symplectic manifolds, singular Enriques varieties, symplectic varieties, Minimal Model Program, termination of flips.
    }

    \begin{abstract}
        We introduce and study the class of primitive Enriques varieties, whose smooth members are Enriques manifolds. We provide several examples and we demonstrate that this class is preserved under the operations of the Minimal Model Program (MMP). In particular, given an Enriques manifold $Y$ and an effective $\R$-divisor $B_Y$ on $Y$ such that the pair $(Y,B_Y)$ is log canonical, we prove that any $(K_Y + B_Y)$-MMP terminates with a minimal model $(Y',B_{Y'})$ of $(Y,B_Y)$, where $Y'$ is a $\Q$-factorial primitive Enriques variety with canonical singularities. Finally, we investigate the asymptotic theory of Enriques manifolds.
	\end{abstract}

    \maketitle
    
	\begingroup
		\hypersetup{linkcolor=black}
		\tableofcontents
	\endgroup

    \section{Introduction}
    
    Irreducible holomorphic symplectic (IHS) manifolds are simply connected, compact K\"ahler manifolds carrying a holomorphic symplectic form that generates the space of holomorphic $2$-forms. According to the Beauville--Bogomolov decomposition theorem \cite{Beauville83a}, they constitute one of the three building blocks of compact K\"ahler manifolds with vanishing (real) first Chern class.
    Enriques manifolds, on the other hand, are projective manifolds that are not simply connected and whose universal covering is a projective IHS manifold. In particular, they can be viewed as finite \'etale quotients of the latter. They were defined and studied independently by Oguiso and Schr\"oer \cite{OS11a,OS11b} and by Boissière, Nieper-Wi{\ss}kirchen and Sarti \cite{BNWS11}.
    IHS and Enriques manifolds are regarded as higher-dimensional generalizations of K3 and Enriques surfaces, respectively. It is natural to extend these two notions to the singular setting as well.
    
    Primitive symplectic varieties (see \autoref{dfn:PSV}) have been introduced as singular analogs of IHS manifolds. Their class constitutes the most general framework for moduli and deformation theory \cite{BL21,BL22}. It is also preserved under the operations of the Minimal Model Program (MMP). Moreover, given a log canonical pair $(X,B)$ such that $X$ is a projective IHS manifold and $B$ is an effective $\R$-divisor on $X$, Matsushita and Zhang \cite[Theorem 4.1]{MZ13} showed that any sequence of flops starting from $(X,B)$ terminates with another log canonical pair $(X',B')$ such that $X'$ is a projective IHS manifold and $B'$ is a nef and effective $\R$-divisor on $X'$. Lehn and Pacienza \cite[Theorem 1.2]{LehnPac16} established later the following more general result.
    
    \begin{thm}
        \label{thm:termination_MMP_symplectic}
        Let $X$ be a projective IHS manifold and let $B$ be an effective $\R$-divisor on $X$ such that $(X,B)$ is a log canonical pair. Then any $(K_X + B)$-MMP terminates with a minimal model $(X',B')$ of $(X,B)$, where $X'$ is a $\Q$-factorial primitive symplectic variety and $B'$ is a nef and effective $\R$-divisor on $X'$.
    \end{thm}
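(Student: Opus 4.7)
The plan is to view the $(K_X+B)$-MMP as a $B$-MMP, since $K_X \equiv 0$ on an IHS manifold. A step $\phi_i \colon (X_i,B_i) \dashrightarrow (X_{i+1},B_{i+1})$ contracts a $B_i$-negative extremal ray and is either divisorial or a flip. To run the MMP in the log canonical setting, I would first reduce to the klt case via the standard perturbation $(1-\varepsilon)B + \varepsilon H$ with $H$ ample, then invoke existence of extremal contractions and flips for klt pairs, concluding either by a limiting argument as $\varepsilon \to 0$ or directly from the lc-MMP machinery of Birkar and Hacon--Xu.

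Next I would show inductively that each $X_i$ is a $\mathbb{Q}$-factorial primitive symplectic variety with canonical singularities and that $B_i$ is effective. The base case is clear, since an IHS manifold is smooth and primitive symplectic. For the inductive step, $\phi_i$ is an isomorphism in codimension one, so the holomorphic symplectic form on the smooth locus of $X_i$ transports to a holomorphic symplectic form on the smooth locus of $X_{i+1}$; by Namikawa's results the resulting singularities are symplectic, and hence canonical. Preservation of $\mathbb{Q}$-factoriality, effectivity of $B_i$, and the Hodge-theoretic conditions characterizing primitive symplectic varieties (notably $h^{2,0}=1$) then follow from standard MMP bookkeeping combined with the invariance of $H^2$ under birational maps that are isomorphisms in codimension one.

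The main obstacle is termination. Divisorial contractions strictly decrease the Picard number, so only finitely many of them can occur, and the question reduces to termination of sequences of flips between primitive symplectic varieties. In the case $B=0$ of flops on IHS manifolds, termination was established by Matsushita and Zhang using the Beauville--Bogomolov--Fujiki form on $H^2$ together with the finiteness of birational models up to the monodromy action on the movable cone. The crux of the general case is to upgrade this argument to $B \neq 0$: one must control the classes $[B_i]$ inside the movable cone and verify that each flip crosses a wall of a Markman-type chamber decomposition, forcing the sequence of flips to be finite. Combined with a careful organization of the sequence via the MMP with scaling by an ample divisor, this wall-crossing analysis is where I would expect the bulk of the technical work to lie.
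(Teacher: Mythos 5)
First, a point of reference: the paper does not prove \autoref{thm:termination_MMP_symplectic} at all --- it is quoted from Lehn and Pacienza \cite[Theorem 1.2]{LehnPac16} (building on Matsushita--Zhang \cite[Theorem 4.1]{MZ13}), so there is no in-paper proof to compare against. Judged on its own terms, your proposal correctly identifies the standard architecture: since $K_X\equiv 0$, the MMP is a $B$-MMP; divisorial contractions drop the Picard number and hence occur finitely often; the class of $\Q$-factorial primitive symplectic varieties is preserved along the way; and the remaining issue is termination of the flip/flop sequence. One bookkeeping caveat: for a divisorial contraction the map is \emph{not} an isomorphism in codimension one, so "invariance of $H^2$ under maps that are isomorphisms in codimension one" does not show the image is still primitive symplectic; one needs instead an argument in the spirit of \autoref{lem:symplectic_under_maps} combined with \autoref{prop:dom_rational_map_from_PSV} (a dominant rational map from a primitive symplectic variety to a projective symplectic variety forces the target to be primitive).

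The genuine gap is that the entire content of the theorem is the termination of the flop sequence, and your proposal explicitly defers exactly this step ("this wall-crossing analysis is where I would expect the bulk of the technical work to lie"). Asserting that one "must control the classes $[B_i]$ inside the movable cone and verify that each flip crosses a wall of a Markman-type chamber decomposition" is a restatement of the problem, not a solution: you give no argument for local finiteness of the relevant walls along the path traced by the $B_i$, no reason the sequence cannot revisit a chamber, and no account of the fact that once the models become singular the smooth IHS chamber theory does not apply verbatim. A secondary issue is quantification: the theorem asserts that \emph{any} $(K_X+B)$-MMP terminates, so organizing the argument "via the MMP with scaling of an ample divisor" or a perturbation $\varepsilon\to 0$ can at best produce \emph{some} terminating run; to handle an arbitrary sequence of steps you must argue directly with the given sequence, as Matsushita--Zhang and Lehn--Pacienza do by attaching to each flop an invariant built from the Beauville--Bogomolov--Fujiki form and running a finiteness/descent argument. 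As written, the proposal is a plausible road map whose decisive step is missing.
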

    
    This paper has the following two main objectives: (i) to derive an analogous termination result for log canonical pairs $(Y,B_Y)$ such that $Y$ is an Enriques manifold and $B_Y$ is an effective $\R$-divisor on $Y$; and (ii) 
    to characterize the underlying variety of the resulting minimal model $(Y',B_{Y'})$ of $(Y,B_Y)$. To this end, we introduce \emph{primitive Enriques varieties} (see \autoref{dfn:PEV}) as singular analogs of Enriques manifolds. They are finite quasi-\'etale quotients of primitive symplectic varieties by non-symplectic group actions.

    Recall that the \emph{termination of flips conjecture} is one of the central problems in the MMP. It has been completely resolved in dimension $ 3 $ by Kawamata and Shokurov, as well as in many cases in dimension $ 4 $, but it remains widely open in general. We refer to \cite[Section 1]{ChenTsak23} for an overview and to \cite[Corollary 1.4.2]{BCHM10} for a partial result that is valid in any dimension. Our main result is the following and confirms in particular the termination of flips conjecture for the class of Enriques manifolds.
    
    \begin{thm}
        \label{thm:termination_MMP_Enriques}
        Let $Y$ be an Enriques manifold and let $B_Y$ be an effective $\R$-divisor on $Y$ such that $(Y,B_Y)$ is a log canonical pair. Then any $(K_Y + B_Y)$-MMP terminates with a minimal model $(Y',B_{Y'})$ of $(Y,B_Y)$, where $Y'$ is a $\Q$-factorial primitive Enriques variety with canonical singularities and $B_{Y'}$ is a nef and effective $\R$-divisor on $Y'$.
    \end{thm}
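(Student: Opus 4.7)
The plan is to lift the problem to the universal cover. Write $\pi : X \to Y$ for the universal cover, so that $X$ is a projective IHS manifold and $\pi$ is a finite étale Galois cover with group $G := \Aut(X/Y)$. Set $B_X := \pi^*B_Y$, a $G$-invariant effective $\R$-divisor. Since $\pi$ is étale, $K_X + B_X = \pi^*(K_Y + B_Y)$ and the pair $(X,B_X)$ is log canonical.

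Next I would set up a correspondence between $(K_Y + B_Y)$-MMPs on $Y$ and $G$-equivariant $(K_X + B_X)$-MMPs on $X$. A $(K_Y + B_Y)$-negative extremal ray $R \subset \NEb(Y)$ pulls back along $\pi$ to a $G$-stable extremal face of $\NEb(X)$, whose $G$-equivariant contraction exists by an equivariant cone/contraction theorem and descends under the $G$-quotient to the contraction of $R$; if this contraction is small, the $G$-equivariant flip on $X$ (which exists in the log canonical setting) descends to the flip of $R$. Moreover the $G$-action on each intermediate variety $X_i$ remains quasi-étale: if a divisor $D_i \subset X_i$ were pointwise fixed by some $1 \neq g \in G$, then since the birational map $X \dashrightarrow X_i$ is an isomorphism outside a codimension $\geq 2$ locus of $X_i$, $D_i$ would correspond to a divisor in $X$ also fixed by $g$, contradicting the freeness of the $G$-action on $X$. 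Therefore any $(K_Y + B_Y)$-MMP is the $G$-quotient of a $G$-equivariant $(K_X + B_X)$-MMP. The latter terminates by \autoref{thm:termination_MMP_symplectic} (any MMP does, and equivariant ones can be refined to ordinary ones), with output $(X',B_{X'})$, whose $G$-quotient is the sought minimal model $(Y',B_{Y'})$.

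I would then identify $Y'$ as a $\Q$-factorial primitive Enriques variety carrying the required divisor. By \autoref{thm:termination_MMP_symplectic}, $X'$ is a $\Q$-factorial primitive symplectic variety and $B_{X'}$ is nef and effective. The residual $G$-action on $X'$ is quasi-étale by the previous argument and still non-symplectic: the reflexive symplectic form on $X'$ coincides with the one on $X$ over the common codimension-one isomorphism locus, so $G$ acts on $H^0(X',\Omega^{[2]}_{X'})$ by the same non-trivial character as on $H^0(X,\Omega^2_X)$. Hence $Y' = X'/G$ fits the definition of a $\Q$-factorial primitive Enriques variety (\autoref{dfn:PEV}); its singularities are canonical because those of $X'$ are symplectic (hence canonical) and $\pi' : X' \to Y'$ is quasi-étale. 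Finally, $B_{Y'}$ is the unique effective $\R$-divisor with $\pi'^*B_{Y'} = B_{X'}$, and its nefness follows from that of $B_{X'}$ by the finite surjectivity of $\pi'$.

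The hard part is the first step: setting up the equivariant MMP on $X$ rigorously. One must provide an equivariant cone and contraction theorem for the singular log canonical pairs $(X_i, B_{X_i})$ encountered along the way, and check that the flip of a $G$-stable small contraction exists and is $G$-equivariant. These facts are expected to follow from the usual MMP theorems applied with the $G$-action (by averaging, or by taking invariants of the relevant relative section rings), but the verification at each intermediate singular stage requires some care. The preservation of the quasi-étale property under each MMP step, argued above, is a further compulsory check, but not a serious obstacle.
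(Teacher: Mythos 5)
Your overall architecture coincides with the paper's: pass to the universal covering $\pi \colon X \to Y$, lift the given $(K_Y+B_Y)$-MMP to a $\pi_1(Y)$-equivariant $(K_X+B)$-MMP on the IHS manifold $X$ (this is \autoref{prop:lifting_MMP}), deduce termination from \autoref{thm:termination_MMP_symplectic}, and descend the properties of the output. Two points, however, need repair.

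First, your justification that $Y'$ has canonical singularities is incorrect. You argue that canonicity descends from $X'$ to $Y' \simeq X'/G$ because the quotient map is quasi-\'etale; but a quasi-\'etale quotient of a variety with canonical singularities is in general only klt: the formula $a(F,X')+1 = r\big(a(E,Y')+1\big)$ allows $a(E,Y')<0$ whenever the ramification index $r$ of an exceptional divisor $E$ over $Y'$ exceeds $a(F,X')+1$. The paper records exactly this in \autoref{rem:singularities_quasi-etale_cover}(i), and \autoref{example:uniruled_Enriques} exhibits a quasi-\'etale quotient of a smooth K3 surface that is strictly klt and even uniruled. The correct argument works entirely downstairs: $Y$ is smooth with $K_Y\equiv 0$, and each MMP step starting from a numerically $K$-trivial variety with canonical singularities preserves canonicity (the step is $K$-crepant by the Negativity lemma); this is the content of Subsection \ref{subsection:MMP_step} and is how \autoref{lem:MMP_step_primitive_Enriques} is used in the paper.

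Second, the parenthetical \enquote{equivariant ones can be refined to ordinary ones} hides the main technical point, and your closing paragraph misplaces the difficulty. The equivariant cone, contraction and flip theorems you worry about are quoted from Prokhorov's survey and are not the issue. The issue is that a step of a $G$-equivariant MMP contracts a whole $G$-orbit of extremal rays, hence an extremal \emph{face}, and is therefore not a priori a composition of steps of an ordinary MMP, so \autoref{thm:termination_MMP_symplectic} cannot be applied as it stands. The paper bridges this with \autoref{prop:G-equivariant_termination}: each equivariant step yields the relative canonical model over its base $Z_i$, relative minimal models of lc pairs exist (Birkar, Hashizume--Hu), and running an ordinary relative MMP with scaling over each $Z_i$ produces a genuine ordinary $(K_X+B)$-MMP crepant over the equivariant one, whose termination forces that of the equivariant sequence. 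Without this step (or an equivalent decomposition of each face contraction into ordinary ray contractions and flips), your termination claim for the equivariant MMP is unproved.
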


    We present now the strategy for the proof of \autoref{thm:termination_MMP_Enriques}. Given an Enriques pair $(Y,B_Y)$ as above, run any $(K_Y+B_Y)$-MMP. Consider the universal covering $\gamma \colon X \to Y$ of $Y$ and the corresponding IHS pair $(X,B)$, where $B \coloneqq \gamma^* B_Y$. To prove that the given $(K_Y+B_Y)$-MMP terminates, we first show that it can be lifted to a $\pi_1(Y)$-equivariant $(K_X+B)$-MMP. 
    We refer to Prokhorov's survey \cite{Pro21} for the equivariant MMP. We also stress that a step of an equivariant MMP need not be a composition of steps of a usual MMP, so \autoref{thm:termination_MMP_symplectic} cannot yet be applied to $(X,B)$ to derive \autoref{thm:termination_MMP_Enriques}.
    Therefore, we set $(X',B') \coloneqq (X, B)$ and we prove next that the $\pi_1(Y)$-equivariant $(K_X+B)$-MMP can in turn be lifted to a usual $(K_{X'} + B')$-MMP. 
    Since the $(K_{X'} + B')$-MMP terminates by \autoref{thm:termination_MMP_symplectic}, we conclude that the original $(K_Y+B_Y)$-MMP also terminates.
    
    \medskip

    Most of the examples of primitive Enriques varieties that we construct in this paper are finite quasi-\'etale quotients of irreducible symplectic varieties (see \autoref{dfn:IS_variety}) by non-symplectic group actions. We call these quotients \emph{irreducible Enriques varieties} (see \autoref{dfn:IEV}). 
    Our definition was inspired by the fundamental paper \cite{GGK19} of Greb, Guenancia and Kebekus, who characterized irreducible symplectic varieties in terms of holonomy groups. Building crucially on their results, we obtain a similar characterization of irreducible Enriques varieties. We refer to \autoref{subsection:def_Enriques} for the details.
    
    We point out that Samuel Boissi\`ere, Chiara Camere and Alessandra Sarti introduced independently in \cite{BCS24} another notion of singular Enriques varieties, which they call \emph{logarithmic Enriques varieties}. Our definitions of \emph{irreducible} and \emph{primitive} Enriques varieties differ slightly from their definitions of logarithmic Enriques varieties \emph{of ISV type} and \emph{of PSV type}, respectively, because singular Enriques varieties in our sense are also allowed to have trivial canonical class, whereas they exclude this possibility.
    
    \medskip
    
    The main body of the existing theory of Enriques manifolds is contained in the papers \cite{BNWS11,OS11a,OS11b}. Our last goal in this paper is to further develop this theory by studying the asymptotic properties of real divisors on Enriques manifolds.
    For instance, the following result is an analog of \cite[Theorem 1.6(2)]{Denisi23} for Enriques manifolds and characterizes their volume function, demonstrating that it is piecewise polynomial (see \autoref{dfn:piecewise_polynomial}).
    Our result should be compared with the second theorem on \cite[p.\ 210]{BKS04}, see also \cite[Subsection 3.3]{BKS04}.
    
    \begin{thm}
        \label{thm:volume_function_Enriques}
        If $Y$ is an Enriques manifold of dimension $2n$, then its volume function
        \[
            \vol_Y(-) \colon N^1(Y)_\R \to \R_{\geq 0} , \ \alpha \mapsto \vol_Y(\alpha)
        \]
        is homogeneous of degree $2n$ and piecewise polynomial.
    \end{thm}
    %
    
    Additionally, we address \cite[Problem]{Payne06} in the setting of Enriques manifolds. More precisely, Payne asks whether the closure of the cone of $k$-ample classes (see \autoref{dfn:amp_k-cone}) and the cone of birationally $k$-movable curves (see \autoref{dfn:birationally_k-movable_cone}) are dual. This problem can be viewed as an attempt to generalize the duality between the pseudoeffective cone and the cone of movable curves, obtained by \cite[Theorem 2.2]{BDPP13}. We establish here the following duality result, which is an analog of \cite[Theorem B]{DRO25} for Enriques manifolds.
    
    \begin{thm}
        \label{thm:cone_duality_Enriques}
        If $Y$ is an Enriques manifold of dimension $2n$, then for any integer
        $k \in \{ 1, \dots, 2n \}$
        we have
        \[
            \Amp^k(Y)^{\vee} = \bMobc_k(Y) . 
        \]
        In particular, if $1 \leq k \leq n$, then 
        \[ 
            \Amp^k(Y)^{\vee} = \NEb(Y) .
        \]
    \end{thm}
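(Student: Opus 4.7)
My plan to prove the first assertion is to descend the analogous IHS cone duality via the universal cover $\gamma \colon X \to Y$, where $X$ is a projective IHS manifold of dimension $2n$ and $G \coloneqq \pi_1(Y)$ is a finite group acting freely on $X$ with quotient $Y$. Since $\gamma$ is étale of degree $|G|$, pullback identifies $N^1(Y)_\R$ with the $G$-invariants $(N^1(X)_\R)^G$ and $N_1(Y)_\R$ with $(N_1(X)_\R)^G$, and the intersection pairing on $Y$ corresponds, up to a factor of $|G|$, to the restriction of the pairing on $X$.

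The next step is to verify that $\gamma^*$ identifies the $k$-ample cone $\Amp^k(Y)$ with $\Amp^k(X) \cap (N^1(X)_\R)^G$ and the birationally $k$-movable cone $\bMobc_k(Y)$ with $\bMobc_k(X) \cap (N_1(X)_\R)^G$. For the $k$-ample cones this follows from the fact that étale maps preserve fiber dimensions: any contraction $\varphi \colon Y \to Z$ with fibers of dimension $\leq k$ pulls back to a $G$-equivariant contraction of $X$ with the same fiber dimensions, and any $G$-equivariant such contraction of $X$ descends to $Y$ by taking the quotient. For the birationally $k$-movable cones one uses analogously that any birational modification of $Y$ lifts to a $G$-equivariant birational modification of $X$ via the (normalized) fiber product, and conversely $G$-equivariant birational modifications of $X$ descend to $Y$; moreover, families of curves sweeping out subvarieties of the appropriate codimension are preserved in both directions.

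The first equality then follows by combining the IHS case $\Amp^k(X)^\vee = \bMobc_k(X)$ with the following linear algebra fact: if a finite group $G$ acts linearly on a finite-dimensional real vector space $V$ with invariant convex cone $C \subseteq V$, then
\[
    (C \cap V^G)^\vee = C^\vee \cap (V^*)^G
\]
in $(V^G)^* \cong (V^*)^G$. This is proved using the averaging projection $p = \frac{1}{|G|}\sum_{g \in G} g$, which satisfies $p(C) \subseteq C \cap V^G$ by convexity and $G$-invariance, together with the identity $\varphi \circ p = \varphi$ for every $\varphi \in (V^*)^G$.

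For the second assertion, when $k \leq n$ one invokes the corresponding equality $\bMobc_k(X) = \NEb(X)$ for IHS manifolds of dimension $2n$, which reflects the abundance of birational deformations in the symplectic setting, and descends via $G$-invariants as above. The principal technical obstacle lies in establishing the compatibility of the birationally $k$-movable cones with the quotient by the finite non-symplectic group action---specifically, in confirming that the descent and lift of birational models, together with the families of sweeping curves that appear in the definition of $\bMobc_k$, interact correctly with the $G$-action; the identification on the $k$-ample side is comparatively straightforward.
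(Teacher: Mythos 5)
Your overall reduction scheme---identify $N^1(Y)_\R$ with $(N^1(X)_\R)^G$, use the averaging lemma $(C \cap V^G)^\vee = C^\vee \cap (V^*)^G$, and quote the IHS duality of \cite{DO23}---is sound linear algebra, and the identification $\overline{\Amp^k}(Y) \cong \overline{\Amp^k}(X) \cap (N^1(X)_\R)^G$ does hold; but note that $\Amp^k$ here is defined via the dimension of the augmented base locus (\autoref{dfn:amp_k-cone}), not via fiber dimensions of contractions in Sommese's sense, so the correct justification is the formula $\abs(\pi^*D) = \pi^{-1}\big(\abs(D)\big)$ for the finite cover $\pi$ (as in the proof of \autoref{prop:abl_Enriques}), together with finiteness of $\pi$ preserving dimensions.

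The genuine gap is the claimed identification $\bMobc_k(Y) = \bMobc_k(X) \cap (N_1(X)_\R)^G$, and specifically the descent inclusion ``$\supseteq$'', which is exactly what your strategy needs to obtain $\Amp^k(Y)^\vee \subseteq \bMobc_k(Y)$. The cone $\bMob_k(X)$ is a sum over \emph{all} birational maps $f \colon X \dashrightarrow X'$ to $\Q$-factorial terminal models, and these maps carry no compatibility with the $G$-action: averaging a class $\gamma \in \Mob_{k,f}(X,X')$ over $G$ produces $\frac{1}{|G|}\sum_g g_*\gamma$ with $g_*\gamma \in \Mob_{k, f\circ g^{-1}}(X,X')$, i.e.\ a $G$-invariant class assembled from models that admit no induced $G$-action and hence no quotient model of $Y$ to which the sweeping families could descend. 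You flag this as the ``principal technical obstacle'' but do not resolve it, and it is not a routine verification. Even the lifting direction has a wrinkle: the cover of a $\Q$-factorial terminal primitive Enriques variety $Y'$ is only $G\Q$-factorial, hence not literally a model in the sense of $\bMob_k(X)$; one must pass to a further small $\Q$-factorialization, which is precisely why the paper's diagram \eqref{eq:Enriques_diagram} has three rows. The paper avoids the comparison of the two $\bMob_k$ cones altogether: it proves the easy inclusion directly on $Y$ by moving curves off augmented base loci, and for the hard inclusion it runs a $(K_Y+P)$-MMP on $Y$ itself (termination and the nature of the output being guaranteed by \autoref{thm:termination_MMP_Enriques}), lifts only the \emph{specific} models that arise to the symplectic side via \autoref{prop:lifting_MMP}, produces the needed rational curves on the IHS resolution $X'$ using Markman's and Kawamata's results on exceptional divisors (Step 2a) and the contraction argument of \cite{TX23} (Step 2b), and then pushes those curves \emph{down} to $Y'$ --- the opposite direction from the descent your argument requires.
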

    
    The proof of the above theorem follows closely the strategy of the proof of \cite[Theorem B]{DRO25}, which settled Payne's question for IHS manifolds, while the new inputs are the MMP lifting constructions used to establish \autoref{thm:termination_MMP_Enriques}. It is worth mentioning that, even though the statement of \autoref{thm:cone_duality_Enriques} only involves an Enriques manifold, it is currently indispensable to pass to possibly singular birational models of the given Enriques manifold to conclude. Such models are $\Q$-factorial primitive Enriques varieties with terminal singularities in this context.

    \medskip

    \textbf{Organization of the paper}:
    In \autoref{section:preliminaries} we recall some definitions and facts about quasi-\'etale and Galois covers, the augmented irregularity, varieties with numerically trivial canonical divisor, and the usual or equivariant MMP.
    
    In \autoref{section:MMP} we establish the main ingredients for the proof of \autoref{thm:termination_MMP_Enriques}. Specifically, we first show in \autoref{prop:lifting_MMP} that any usual $(K_V + B)$-MMP can be lifted to an equivariant 
    MMP on a Galois quasi-\'etale cover of $V$.
    We then discuss in \autoref{prop:G-equivariant_termination} how the termination of any equivariant MMP can be deduced from the termination of usual MMPs.
    
    In \autoref{section:symplectic_varieties} we review the basic properties and some examples of irreducible and primitive symplectic varieties. We discuss next finite non-symplectic group actions on primitive symplectic varieties and the MMP for symplectic log canonical pairs. Everything will be used later in the study of primitive Enriques varieties.
    
    The heart of the paper, \autoref{section:Enriques_varieties}, is devoted to the definition and investigation of the singular generalizations of Enriques manifolds. In \autoref{subsection:def_Enriques} we define irreducible and primitive Eniques varieties and we establish their basic properties; see Propositions \ref{prop:characterization_IEV}, \ref{prop:algebra_reflexive_forms_IEV}, \ref{prop:basic_prop_PEV} and \ref{prop:PEV_closed_under_bir_contraction}.
    In \autoref{subsection:MMP_Enriques} we prove our \autoref{thm:termination_MMP_Enriques}, while in \autoref{subsection:examples_Enriques} we provide various examples of primitive Enriques varieties.

    In \autoref{section:asymptotic_theory_EM} we deal with the asymptotic theory of divisors on Enriques manifolds. In particular, we prove our Theorems \ref{thm:volume_function_Enriques} and \ref{thm:cone_duality_Enriques}.
    
    \medskip
    
    \textbf{Acknowledgements}: 
    The authors thank Chenyu Bai, Stefano Filipazzi, C\'ecile Gachet, Franco Giovenzana, Daniel Greb, Vladimir Lazi\'c, Alessandra Sarti, Martin Schwald and Sokratis Zikas for many valuable conversations related to this project. They are also grateful to Mirko Mauri not only for showing them \autoref{lem:GGK_cover} and \autoref{example:uniruled_Enriques} but also for suggesting many useful references, and to Gianluca Pacienza for a valuable conversation at an early stage of this work and for useful comments on a draft of the paper. The authors would finally like to thank the anonymous referees for pointing out some inaccuracies in the previous version of \autoref{lem:lifting_MMP_step}, for suggesting the current proof of \autoref{prop:PEV_closed_under_bir_contraction}, as well as for providing valuable feedback, which resulted in improving the exposition and polishing the paper.
    
    FAD and ÁDRO were supported by the European Research Council (ERC) under the European Union's Horizon 2020 research and innovation programme (ERC-2020-SyG-854361-HyperK). NT acknowledges support by the ERC starting grant $\#804334$.

    \section{Preliminaries}
    \label{section:preliminaries}

    We work throughout the paper over the field $\C$ of complex numbers, and a variety is an integral separated $\C$-scheme of finite type.
    We use the terminology of \cite{Laz04book_I} for notions of positivity and of \cite{KM98,Fuj17book} for standard notions in birational geometry, e.g., singularities of pairs and the Minimal Model Program (MMP). In particular, a \emph{pair} $(X,B)$ consists of a normal projective variety $X$ and an effective $\R$-divisor $B$ on $X$ such that $K_X+B$ is $\R$-Cartier.
    
    We frequently work also in the equivariant setting and we use \cite{Pro21} as our main reference for the equivariant MMP.
    In particular, a \emph{$G$-pair} consists of the following data: a pair $(X,B)$ as above, together with a finite subgroup $G$ of the automorphism group $\Aut(X)$ of $X$, such that $X$ is a $G$-variety and $B$ is a $G$-invariant divisor. In this setting, the $G$-variety $X$ is said to be \emph{$G\Q$-factorial} if any $G$-invariant Weil divisor on $X$ is $\Q$-Cartier.

    Given a normal projective variety $X$ and a finite subgroup $G$ of $\Aut(X)$, 
    we denote by $G_x$ the \emph{stabilizer} of a point $x\in X$, by $\Fix(g)$ the \emph{fixed locus} of an element $g\in G$, and by $X^\text{nf}$ the \emph{non-free locus} of the $G$-action.
    We say that $G$ acts \emph{freely in codimension one} on $X$ if $X^\text{nf}$ is a closed subset of $X$ with $\codim_X (X^\text{nf}) \geq 2$. If additionally $X^\text{nf} = \emptyset$, then we say that $G$ acts \emph{freely} on $X$.
    Finally, note that if $g \in \Aut(X)$ is a prime order automorphism of $X$, then by setting $G \coloneqq \langle g \rangle$, we have $ \Fix(g) = X^\mathrm{nf} $.

    \medskip
    
    A normal projective variety $X$ is said to be \emph{uniruled} if it is covered by rational curves. For the basic properties of uniruled varieties we refer to \cite{Debarre01book}, remarking only that uniruledness is a birationally invariant property. We also recall the following immediate corollary of \cite[Lemma 3.18]{LMPTX23}, which relies on \cite[Corollary 0.3]{BDPP13}.
    
    \begin{lem}
        \label{lem:uniruled_K-trivial}
        If $X$ is a normal projective variety such that $K_X \sim_\Q 0$, then $X$ is uniruled if and only if $X$ does not have canonical singularities.
    \end{lem}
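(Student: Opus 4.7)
The plan is to pass to a log resolution $f \colon \widetilde{X} \to X$ and reduce the statement to the Boucksom--Demailly--P\u{a}un--Peternell characterization of uniruledness \cite[Corollary 0.3]{BDPP13}, which asserts that a smooth projective variety is uniruled if and only if its canonical divisor is not pseudo-effective. Writing $K_{\widetilde{X}} = f^{\ast} K_X + \sum_i a_i E_i$, where the $E_i$ are the $f$-exceptional prime divisors and the $a_i$ are the corresponding discrepancies, and using the hypothesis $K_X \sim_{\Q} 0$, one obtains $K_{\widetilde{X}} \sim_{\Q} \sum_i a_i E_i$. Since uniruledness is a birational invariant of normal projective varieties, $X$ is uniruled if and only if $\widetilde{X}$ is, and by \cite[Corollary 0.3]{BDPP13} this in turn is equivalent to $\sum_i a_i E_i$ not being pseudo-effective.

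If $X$ has canonical singularities, then by definition every $a_i \geq 0$, so $\sum_i a_i E_i$ is effective and \emph{a fortiori} pseudo-effective. This already yields that $\widetilde{X}$, and hence $X$, is not uniruled, which is the easier half of the equivalence.

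For the converse, assume that $X$ has worse than canonical singularities, so that some $a_j < 0$. The key input is then \cite[Lemma 3.18]{LMPTX23}, whose content is precisely that a $\Q$-linear combination of $f$-exceptional prime divisors with at least one negative coefficient cannot be pseudo-effective. Once this non-pseudo-effectivity is in hand, \cite[Corollary 0.3]{BDPP13} forces $\widetilde{X}$, and therefore $X$, to be uniruled, completing the proof.

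The genuine obstacle lies in this second half: \emph{a priori} a sum of exceptional divisors with mixed-sign coefficients could still be pseudo-effective, so ruling this out requires exploiting the rigidity of $f$-exceptional loci via a negativity-type argument combined with BDPP itself. Since \cite[Lemma 3.18]{LMPTX23} carries out exactly this analysis in full generality, the statement above follows from it as an immediate specialization to the $K_X$-trivial case.
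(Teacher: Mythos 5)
Your argument is correct and is essentially the paper's own route: the paper supplies no proof at all, deferring entirely to \cite[Lemma 3.18]{LMPTX23} together with \cite[Corollary 0.3]{BDPP13}, and your resolution-plus-discrepancy reduction to BDPP is exactly the standard derivation behind that citation. One caveat: your paraphrase of \cite[Lemma 3.18]{LMPTX23} as the statement that an exceptional divisor with a negative coefficient is never pseudo-effective is almost certainly not what that lemma literally says --- the paper calls the present lemma an \emph{immediate corollary} of it, so it is more plausibly a uniruledness criterion itself; the fact you actually need (an $f$-exceptional pseudo-effective divisor is effective) is nonetheless true and standard, following from Nakayama's theory of the $\sigma$-decomposition \cite[Chapter III]{Nak04}, so this is a misattribution rather than a mathematical gap.
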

    
    Given normal varieties $X$ and $Y$, we frequently consider the following types of maps between them. A \emph{fibration} is a projective surjective morphism $X \to Y$ with connected fibers. A \emph{birational contraction} is a birational map $X \dashrightarrow Y$ whose inverse does not contract any divisor. A \emph{small morphism} is a projective birational morphism $X \to Y$ which is an isomorphism in codimension one.
    Finally, a \emph{resolution} of $X$ is a projective birational morphism $W \to X$ from a smooth variety $W$.
    
    \begin{rem}
        \label{rem:contraction_from_K-trivial}
        Let $f \colon X \to Y$ be a proper birational morphism between normal varieties. If $K_X \sim_\Q 0$, then $K_Y \sim_\Q 0$, since $\codim_Y f \big( \Exc(f) \big) \geq 2$, and the birational morphism $f$ is actually crepant by the Negativity lemma \cite[Lemma 3.39(1)]{KM98}.
    \end{rem}

    \begin{rem}
        \label{rem:Q-fact_very_sing}
        If $X$ is a normal projective variety with klt but not canonical singularities and $K_X \sim_\Q 0$, then the underlying variety $T$ of any $\Q$-factorial terminal modification $h \colon (T, \Delta_T) \to X$ of $X$ satisfies $K_T \not\sim_\Q 0$. Indeed, by assumption and by construction (see \cite[Corollary 1.4.3]{BCHM10}) we know that $\Delta_T \neq 0$ is an effective $h$-exceptional $\Q$-divisor on $T$ and we also have $K_T + \Delta_T \sim_\Q h^* K_X \sim_\Q 0$, which yields the claim.  
    \end{rem}

    \subsection{Reflexive differential forms}

    Let $X$ be a normal variety. We denote by $\Omega^1_X$ the sheaf of K\"ahler differentials on $X$ and by $X_\text{reg}$ the regular locus of $X$, and we consider the natural inclusion $\iota \colon X_\text{reg} \hookrightarrow X$. For any integer $p \in \{ 0, \dots, \dim X \}$ we set
    \[ \Omega_X^{[p]} \coloneqq \big( \Lambda^p \Omega_X^1 \big)^{**} = \big( \Omega_X^p \big)^{**} \simeq \iota_* \Omega_{X_\text{reg}}^p , \]
    and we refer to a global section of $\Omega_X^{[p]}$ as a \emph{reflexive $p$-form} on $X$. 
    Note that
    \[ H^0 \big( X, \Omega_X^{[p]} \big) = H^0 \big( X_\text{reg}, \Omega^p_{X_\text{reg}} \big) . \]
    We will frequently use the next result, which was established by Greb, Kebekus and Peternell, see \cite[Proposition 6.9]{GKP16b}.

    \begin{prop}[Hodge duality for klt spaces]
        \label{prop:Hodge_duality}
        Let $X$ be a normal projective variety which admits the structure of a klt pair.\footnote{The phrase \enquote{$X$ admits the structure of a klt pair} means that there exists an effective $\R$-divisor $\Delta$ on $X$ such that $(X,\Delta)$ is a klt pair. By \cite[Lemma 2.14(i)]{TX25} this is equivalent to assuming that there exists an effective $\Q$-divisor $\Delta'$ on $X$ such that $(X,\Delta')$ is a klt pair.} For any integer $p \in \{ 0, \dots, \dim X \}$ there are $\C$-linear isomorphisms
        \[ 
            H^0 \big( X, \Omega_X^{[p]} \big) \simeq \overline{H^p \big( X,\OO_X \big)} . 
        \]
    \end{prop}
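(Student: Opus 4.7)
The plan is to reduce the statement to the classical Hodge decomposition on a smooth model by combining two deep inputs about klt singularities: the extension theorem of Greb-Kebekus-Kov\'acs-Peternell for reflexive differential forms, together with the fact that klt singularities are rational.

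First, let $(X, \Delta)$ be a klt pair structure on $X$, whose existence is guaranteed by hypothesis, and fix a log resolution $\pi \colon W \to X$ of $(X, \Delta)$. The Greb-Kebekus-Kov\'acs-Peternell extension theorem for reflexive differentials on klt spaces then yields a canonical sheaf isomorphism $\pi_* \Omega_W^p \simeq \Omega_X^{[p]}$ for every $p$; passing to global sections gives
\[
H^0 \bigl( X, \Omega_X^{[p]} \bigr) \simeq H^0 \bigl( W, \Omega_W^p \bigr).
\]

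Next, since $W$ is smooth and projective, classical Hodge theory supplies a canonical complex-linear isomorphism $H^0(W, \Omega_W^p) \simeq \overline{H^p(W, \OO_W)}$ via the identity $H^{p,0}(W) = \overline{H^{0,p}(W)}$ inside the Hodge decomposition of $H^p(W, \C)$. To transport this back to $X$, recall that any variety admitting the structure of a klt pair has rational singularities; in particular $R^i \pi_* \OO_W = 0$ for all $i \geq 1$, and the degeneration of the Leray spectral sequence produces canonical isomorphisms $H^p(W, \OO_W) \simeq H^p(X, \OO_X)$ compatible with the complex structures. Chaining the three isomorphisms yields the desired identification.

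The main obstacle is the extension theorem $\pi_* \Omega_W^p \simeq \Omega_X^{[p]}$, which is the deep input of Greb-Kebekus-Kov\'acs-Peternell and is highly non-trivial even when $p = \dim X$; the other two ingredients are essentially formal consequences of classical Hodge theory on smooth projective varieties combined with the rationality of klt singularities. Both non-trivial inputs genuinely use the klt hypothesis on $X$, which explains why the proposition is stated in this generality.
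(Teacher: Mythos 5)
Your argument is correct and is essentially the standard proof of this statement: the paper itself gives no proof but simply cites \cite[Proposition 6.9]{GKP16b}, and the argument there is exactly the one you outline (reflexivity of $\pi_* \Omega_W^p$ from the Greb--Kebekus--Kov\'acs--Peternell extension theorem, Hodge symmetry on the resolution, and the Leray isomorphism $H^p(W,\OO_W) \simeq H^p(X,\OO_X)$ coming from the rationality of klt singularities).
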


    \subsection{Galois and quasi-\'etale covers}

    The following definitions are taken from \cite[Section 3]{GKP16a}.
    
    \begin{dfn}
        Let $X$ and $Y$ be normal varieties.
        \begin{enumerate}[(a)]
            \item A morphism $\gamma \colon X \to Y$ is called \emph{quasi-\'etale} if $ \dim X = \dim Y $ and if there exists a closed subset $ Z \subset X $ of codimension $\codim_X Z \geq 2$ such that $ \gamma |_{X \setminus Z} \colon X \setminus Z \to Y $ is \'etale.
            
            \item A \emph{cover} or \emph{covering map} is a finite surjective morphism $\gamma \colon X \to Y$.

            \item Assume now that $X$ and $Y$ are also quasi-projective. A covering map $\gamma \colon X \to Y$ is called \emph{Galois} if there exists a finite subgroup $G$ of $\Aut(X)$ such that $\gamma$ is isomorphic to the quotient map $X \twoheadrightarrow X / G$.
        \end{enumerate}
    \end{dfn}

    
    \begin{rem}~
        \label{rem:Galois_and_quasi-etale}
        \begin{enumerate}[(i)]
            \item If $\gamma \colon X \to Y$ is a quasi-\'etale cover of a normal variety $Y$, then $\gamma$ is \'etale over $Y_\text{reg}$ by purity of the branch locus \cite[Tag 0BMB]{Stacks}. Therefore, a quasi-\'etale cover of a smooth variety is \'etale.

            \item If $\gamma \colon X \to Y$ is a Galois cover determined by a finite group $G \subseteq \Aut(X)$, then $\gamma$ is \'etale (resp.\ quasi-\'etale) if and only if $G$ acts freely (resp.\ freely in codimension one) on $X$.

            \item If $\gamma$ and $G$ are as in part (ii), then $X$ is $G\Q$-factorial if and only if $Y \simeq X /G$ is $\Q$-factorial by \cite[Lemma 1.1.3]{Pro21}.
            
            \item If $\gamma \colon X \to Y$ is a Galois quasi-\'etale cover determined by a finite group $G \subseteq \Aut(X)$, then 
            \[ 
                H^0 \big(Y,\Omega_Y^{[k]} \big) \simeq H^0 \big(X,\Omega_X^{[k]} \big)^G \ \text{ for any integer } \, 0 \leq k \leq \dim X = \dim Y .
            \]
        \end{enumerate}
    \end{rem}
    
    \begin{rem}
        \label{rem:singularities_quasi-etale_cover}
        Given a quasi-\'etale cover $\gamma \colon X \to Y$ of a normal variety $Y$, by \cite[Proposition 5.20]{KM98}
        we obtain the following:
        \begin{enumerate}[(i)]
            \item $K_X$ is $\Q$-Cartier if and only if $K_Y$ is $\Q$-Cartier, and we also have $K_X \sim_\Q \gamma^* K_Y$. In particular, $K_Y \sim_\Q 0$ (resp.\ $K_Y \equiv 0)$ if and only if $K_X \sim_\Q 0$ (resp.\ $K_X \equiv 0)$; see \cite[Chapter 7, Theorem 2.18]{Liu02book} and \cite[Tag 02RT]{Stacks} for the implication $K_X \sim_\Q 0 \implies K_Y \sim_\Q 0$.
            
            Moreover, if $Y$ has canonical (resp.\ terminal) singularities, then $X$ has canonical (resp.\ terminal) singularities. On the other hand, if $X$ has canonical singularities, then $Y$ has at worst klt singularities. An example of a klt quotient of a smooth surface is given in \autoref{example:uniruled_Enriques}.

            \item If $B_Y$ is an effective $\R$-divisor on $Y$ and if we set $B_X \coloneqq \gamma^* B_Y$, then we have 
            \[ K_X + B_X \sim_\Q \gamma^* (K_Y + B_Y) . \]
            Furthermore, $(X,B_X)$ is a log canonical (resp.\ klt) pair if and only if $(Y,B_Y)$ is a log canonical (resp.\ klt) pair.
        \end{enumerate}
    \end{rem}    
    
    \begin{lem}
        \label{lem:NeronSeveri_GaloisCovering}
        Let $\gamma \colon X\to Y \simeq X/G$ be a Galois cover between normal projective varieties determined by a finite subgroup $G$ of $\Aut(X)$. Then 
        \[ 
            N^1(X)_{\R}^G \simeq N^1(Y)_{\R} \quad \text{ and } \quad N_1(X)_{\R}^G \simeq N_1(Y)_{\R} .
        \]
    \end{lem}
        
    \begin{proof}
        First, as $\gamma$ is the quotient map $X \to X/G \simeq Y$, note that the pullback of Cartier divisors induces an injective map 
        \[ 
            \gamma^* \colon N^1(Y)_{\R} \hookrightarrow N^1(X)_{\R}^G .
        \]
        To prove its surjectivity, and establish thus the isomorphism $N^1(X)_{\R}^G \simeq N^1(Y)_{\R}$, let $D$ be a $G$-invariant $\R$-Cartier $\R$-divisor on $X$. As 
        \[ 
            N^1(X)_{\R}^G \simeq N^1(X)^G \otimes \R , 
        \]
        we may assume that $D$ is an integral Cartier divisor.
        Since $\gamma$ is finite, arguing as in the first part of the proof of \cite[Lemma 1.1.3]{Pro21}, we infer from \cite[Chapter 7, Remark 2.19]{Liu02book} that $\gamma_* (D)$ is a $\Q$-Cartier divisor on $Y$, 
        and we also have $\gamma^* \gamma_* (D) = |G|D$,
        %
        %
        %
        as shown in the second part of the proof of \cite[Lemma 1.1.3]{Pro21}. This implies that $\gamma^*$ is surjective, as desired.
        
        Second, note that the usual proper push-forward of $1$-cycles induces a surjective map of finite-dimensional $\R$-vector spaces 
        \[ \gamma_*\colon N_1(X)_{\R}^G \twoheadrightarrow N_1(Y)_{\R} . \]
        To prove the isomorphism $N_1(X)_{\R}^G \simeq N_1(Y)_{\R}$, it remains to check that $\gamma_*$ is injective. Supposing that $\alpha\in N_1(X)_{\R}^G$ is nonzero, we now show that so is $\gamma_*\alpha \in N_1(Y)_\R$. Indeed, by assumption, there exists a Cartier divisor $D'$ on $X$ such that $D'\cdot \alpha\neq 0$. Since $\alpha$ is $G$-invariant, for every $g\in G$ we have $D'\cdot \alpha = g^*D'\cdot g^*\alpha = g^*D'\cdot \alpha$. In particular, if we set $D_X \coloneqq \sum_{g\in G}g^*D'$, then we have $D_X\cdot \alpha \neq 0$. Since $D_X$ is $G$-invariant by construction, there exists a Cartier divisor $D_Y$ on $Y$ such that $\gamma^*D_Y=D_X$. The projection formula gives $D_Y\cdot \gamma_*\alpha=D_X\cdot \alpha \neq 0$, and we are done.
    \end{proof}

    \subsection{Augmented irregularity}
    
    Given a normal projective variety $X$, denote by 
    $q(X) \coloneqq h^1(X,\OO_X)$ 
    the \emph{irregularity} of $ X $, and by
	\[ \widetilde{q}(X) \coloneqq \sup \big\{ q(W) \mid W \text{ is a quasi-\'etale cover of } X \big\} \in \N \cup \{\infty\} \]
	the \emph{augmented irregularity} of $ X $. 
    If $X$ has klt singularities, then 
    $ q(X) = h^0 \big( X, \Omega_X^{[1]} \big) $
    by \autoref{prop:Hodge_duality}, and if additionally $ X $ has numerically trivial canonical divisor, then $\widetilde{q}(X) \leq \dim X$ by \cite[Lemma 2.19.2]{GGK19}.
    
    \begin{rem}~
        \label{rem:augmented_irreg}
        \begin{enumerate}[(i)]
            \item The irregularity is a birational invariant of normal projective varieties with rational singularities
            (e.g., with klt singularities by \cite[Theorem 5.22]{KM98}).

            \item The augmented irregularity is a birational invariant for smooth projective varieties; see \cite[Section 2]{Anella21b} for the proof of this fact. However, this is no longer true for normal projective varieties even with canonical singularities; see \autoref{example:singular_Kummer_surface}.

            \item The augmented irregularity is invariant under quasi-\'etale covers by \cite[Lemma 2.19.1]{GGK19}.
        \end{enumerate}
    \end{rem}    
    
    %

    \subsection{Reminders about an MMP step}
    \label{subsection:MMP_step}

    Let $(X,B)$ be a log canonical pair. Assume that $K_X + B$ is not nef and consider a step of a $(K_X+B)$-MMP:
    \begin{center}
        \begin{tikzcd}
            (X, B) \arrow[dr, "\eta" swap] \arrow[rr, "\varphi", dashed] && (X^+, B^+) \arrow[dl, "\eta^+"] \\
            & Z ,
        \end{tikzcd}
    \end{center}
    where $B^+ = \varphi_* B$. The pair $(X^+, B^+)$ is log canonical by \cite[Lemma 2.3.27]{Fuj17book}, and if $X$ is $\Q$-factorial, then so is $X^+$ by \cite[Propositions 4.8.18 and 4.8.20]{Fuj17book}.
        
    Assume now that $K_X \equiv 0$. It follows from \cite[Theorem 4.5.2(4)(iii)]{Fuj17book} that $K_{X^+} \equiv 0$.
    Hence, if $X$ has canonical (resp.\ klt) singularities, then \cite[Lemma 3.38]{KM98} implies that $X^{+}$ also has canonical (resp.\ klt) singularities, and thus both $X$ and $X^+$ have rational singularities by \cite[Theorem 5.22]{KM98}.
    
    \medskip
    
    We demonstrate next that the same statements as above also hold in the equivariant setting, but we also provide more information about the equivariant MMP, which are necessary for our purposes in this paper.

    \medskip

    Let $(X,B)$ be a log canonical $G$-pair.
    Assume that $K_X+B$ is not nef and let $\eta \colon X \to Z$ be the contraction of a $(K_X+B)$-negative extremal ray $R$ of $\NEb(X)^G$, which exists by \cite[Theorem 3.3.1]{Pro21}. We distinguish two cases:

    \medskip

    \emph{Case A}:
    If $\eta$ is a flipping $G$-contraction (i.e., $\codim_X \Exc(\eta) \geq 2$), then by \cite[Theorems 3.4.2 and 3.4.3]{Pro21} we obtain the following $G$-equivariant commutative diagram:
    \begin{center}
        \begin{tikzcd}
            (X, B) \arrow[dr, "\eta" swap] \arrow[rr, "\varphi", dashed] && (X^+, B^+) \arrow[dl, "\eta^+"] \\
            & Z ,
        \end{tikzcd}
    \end{center}
    where $B^+ = \varphi_* B$, the $G$-pair $(X^+, B^+)$ is log canonical, and when $X$ is $G\Q$-factorial, then so is $X^+$ by \cite[Theorem 3.4.2(ii)]{Pro21}. Furthermore, in case that $G$ acts freely in codimension one on $X$, then it also acts freely in codimension one on both $X^+$ and $Z$, since the maps $\eta$, $\eta^+$ and $\varphi$ are all isomorphisms in codimension one.

    \medskip

    \emph{Case B}:
    If $\eta$ is a divisorial $G$-contraction (i.e., $\codim_X \Exc(\eta) = 1$, but the closed subscheme $\Exc(\eta) \subset X$ need not be irreducible or equidimensional in general), then by \cite[Theorem 3.3.2]{Pro21} we obtain the following commutative diagram:
    \begin{center}
        \begin{tikzcd}
            (X, B) \arrow[dr, "\eta" swap] \arrow[rr, "\varphi \, = \, \eta"] && (Z,B_Z) \arrow[dl, "\eta^+ = \, \Id_Z"] \\
            & Z ,
        \end{tikzcd}
    \end{center}
    where $B_Z = \varphi_* B$, the $G$-pair $(Z,B_Z)$ is log canonical, and if $X$ is $G\Q$-factorial, then so is $Z$ and additionally $\Exc(\eta)$ is a $G$-prime divisor on $X$ by \cite[Theorem 3.3.2(ii)]{Pro21}. Furthermore, in case that $G$ acts freely in codimension one on $X$, then it also acts freely in codimension one on $Z$, since the indeterminacy locus of the inverse $\varphi^{-1} = \eta^{-1} \colon Z \dashrightarrow X$ has codimension at least two in $Z$.
        
    \medskip
        
    In conclusion, we have in any case a \emph{$G$-equivariant MMP step}, that is, a commutative diagram of the form
    \begin{center}
        \begin{tikzcd}
            (X, B) \arrow[dr, "\eta" swap] \arrow[rr, "\varphi", dashed] && (X^+, B^+) \arrow[dl, "\eta^+"] \\
            & Z .
        \end{tikzcd}
    \end{center}
    We now make the following observations about it:
    
    \begin{rem}~
        \label{rem:G-equivariant_K-trivial_canonical_sing}
        \begin{enumerate}[(i)]
            \item The pair $(X^+,B^+)$ is the \emph{canonical model} of $(X,B)$ over $Z$; see \cite[Definition 4.8.1]{Fuj17book} and the proof of \cite[Proposition 4.2.2]{Pro21}.

            \item The pair $(X,B)$ has a minimal model $(X^m, B^m)$ over $Z$, which is obtained by running some $(K_X+B)$-MMP over $Z$ with scaling of an ample divisor. Indeed, by (i) and since a relative canonical model is by definition a relative weak canonical model, the assertion follows from \cite[Corollary 3.7]{Bir12a} and \cite[Theorem 1.7]{HH20}.
            Moreover, by (i) and by \cite[Lemma 4.8.4]{Fuj17book} there exists a projective birational morphism $ f \colon X^m \to X^+ $ such that 
            \[ 
                K_{X^m} + B^m \sim_\R f^* (K_{X^+} + B^+). 
            \]
            Thus, $(X^m, B^m)$ is actually a good minimal model of $(X,B)$ over $Z$.
            
            \item Assume now that $K_X \equiv 0$. 
            We claim that $K_{X^+} \equiv 0$. Indeed, since $G \subseteq \Aut(X)$ is finite, by \cite[Lemma 1.1.2]{Pro21} there exists a $G$-invariant $\Q$-divisor $D_X$ on $X$ such that $D_X \sim_\Q K_X$. 
            Since $K_X \equiv 0$, by \cite[Theorem 3.3.1(iii)]{Pro21} there exists a $G$-invariant $\Q$-Cartier $\Q$-divisor $D_Z$ on $Z$ such that $K_X \sim_\Q D_X \sim_\Q \eta^* D_Z$, whence $D_Z \equiv 0$. It follows now that $K_{X^+} \equiv 0$, as claimed. 
            
            Thus, by \cite[Lemma 3.38]{KM98} we deduce that if $X$ has canonical (resp.\ klt) singularities, then $X^+$ also has canonical (resp.\ klt) singularities, and hence both $X$ and $X^+$ have rational singularities by \cite[Theorem 5.22]{KM98}.
        \end{enumerate}
    \end{rem}

    \section{Equivariant MMP and Galois quasi-\'etale covers}
    \label{section:MMP}
            
    \begin{lem}
        \label{lem:invariant_extremal_ray}
        Let $\gamma \colon X\to Y \simeq X/G$ be a Galois cover between normal projective varieties determined by a finite subgroup $G$ of $\Aut(X)$.
        \begin{enumerate}[\normalfont (i)]
            \item The push-forward of $1$-cycles induces a bijection $\NEb(X)^G \to \NEb(Y)$.
            
            \item Let $R$ be an extremal ray of $\NEb(Y)$. If $\alpha \in \NEb(X)^G$ satisfies $\gamma_*(\alpha) \in R$, then it spans an extremal ray $R^G \coloneqq \R_{\geq 0} \, \alpha$ of $\NEb(X)^G$. In particular, if $R = \R_{\geq 0} [C]$ for some irreducible curve $C$ on $Y$, then $R^G = \R_{\geq 0} \big[ \sum_{g\in G} g(C') \big]$ for some irreducible curve $C'$ on $X$ such that $\gamma(C') = C$.
            
            \item Suppose now that $\gamma$ is quasi-\'etale. Let $B_Y$ be an effective $\R$-divisor on $Y$ such that $(Y, B_Y)$ is a log canonical pair, and consider the log canonical pair $(X,B_X)$, where $B_X \coloneqq \gamma^* B_Y$.
            If $R$ is a $(K_Y+B_Y)$-negative extremal ray of $\NEb(Y)$, then $R^G$ is a $(K_X+B_X)$-negative extremal ray of $\NEb(X)^G$.
            Moreover, 
            \[ 
                \Locus \left( R^G \right) = \gamma^{-1} \big( \Locus(R) \big) . 
            \]  
        \end{enumerate}
    \end{lem}

    Before proceeding with the proof of the previous lemma, we recall the notation introduced in part (iii) above, which is inspired from \cite[Section 6.5, p.\ 154]{Debarre01book}: Given a log canonical pair $(V,B_V)$, a $(K_V+B_V)$-negative extremal ray $R$ of $\NEb(V)$, and the associated contraction morphism $c_R \colon V \to W$ which contracts an irreducible curve $C \subseteq V$ if and only if $[C] \in R$, see \cite[Theorem 4.5.2]{Fuj17book}, we call the \emph{locus} of $R$ the following closed subset of $V$:
    \[ 
        \Locus(R) \coloneqq \bigcup_{\substack{C: \; \textrm{irred. curve} \\[0.1em] \textrm{on $V$ s.t.} \, [C] \in R}} \Supp(C) .
    \]
    
    \begin{proof}
        We denote by 
        \[ \gamma_* \colon N_1(X)_\R \to N_1(Y)_\R \] 
        the usual proper push-forward of $1$-cycle classes and we note that 
        \begin{equation}
            \label{eq:ker_pushforward}
            \ker (\gamma_*) \subseteq \big( N_1(X)_\R \setminus \NEb(X) \big) \cup \{ 0 \} ,
        \end{equation}
        because a limit of effective classes intersects positively ample classes by \cite[Theorem 1.44]{KM98}. Indeed, take a nonzero element $\overline{\zeta} \in \NEb(X)$. There is a sequence $\{\zeta_n\}_n$, where $\zeta_n \in \NE(X)$ for every $n \geq 1$, such that $ \zeta_n \to \overline{\zeta}$. The push-forward of $1$-cycles gives a sequence $\big\{ \gamma_*(\zeta_n) \big\}_n$, where $\gamma_* (\zeta_n) \in \NE(Y)$ for every $n \geq 1$, such that $\gamma_*(\zeta_n) \to \gamma_*(\overline{\zeta})$. Thus, for any ample divisor $A$ on $Y$, the projection formula yields
        \[ 
            0\neq \gamma^*[A]\cdot \overline{\zeta}=[A]\cdot \gamma_*(\overline{\zeta}) ,
        \]
        whence $\gamma_*(\overline{\zeta})$ is not the zero element. This proves \eqref{eq:ker_pushforward}.
        
        \medskip

        \noindent (i)
        By \autoref{lem:NeronSeveri_GaloisCovering} and by \eqref{eq:ker_pushforward} the linear map $\NEb(X)^G\to \NEb(Y)$ induced by $\gamma_*$ is injective. To show its surjectivity, pick an element $\overline{\zeta'} \in \NEb(Y)$ and a sequence $\{\zeta'_n\}_n$ of effective cycles such that $\zeta'_n \to \overline{\zeta'}$. Since $\gamma_*$ is surjective on effective cycles, for any element $\zeta'_n \in \NE(Y)$ there is a unique element $\zeta_n\in \NE(X)^G$ such that $\zeta_n \mapsto \zeta_n'$. 
        Then the limit $\overline{\zeta}$ of $\{\zeta_n\}_n$ exists in $\NEb(X)^G$ and it is the preimage of $\overline{\zeta'}$ by the continuity of $\gamma_*$.

        \medskip

        \noindent (ii)
        The second item is a direct consequence of the first one. 
        Indeed, write $\alpha = \beta_1 + \beta_2$ with $\beta_1, \beta_2 \in \NEb(X)^G \setminus \{0\}$. Then $\gamma_*(\beta_1 + \beta_2)=\gamma_*(\beta_1) + \gamma_*(\beta_2) \in R$ by assumption, so both $\gamma_*(\beta_1)$ and $ \gamma_*(\beta_2)$ lie in $R$ by the injectivity of $\gamma_* |_{\NEb(X)^G}$ and the extremality of $R$. This implies that $\alpha=l_1 \beta_1$ and $\alpha=l_2 \beta_2$ for some $l_1, l_2 \in \R_{>0}$, 
        whence both $\beta_1$ and $\beta_2$ belong to $R^G = \R_{\geq 0} \, \alpha$. 
        
        Suppose now that the ray $R$ is spanned by the class of an irreducible curve $C$ in $Y$. Let $C'$ be an irreducible curve in $X$ mapping onto $C$. Then the class $\beta$ of the $1$-cycle $\sum_{g \in G} g(C')$ is $G$-invariant and maps to an element of $R$ under $\gamma_*$. Thus, by the argument above, $\beta$ spans the extremal ray $R^G$ constructed above.

        \medskip

        \noindent (iii)
        Let $C'$ be any irreducible curve in $\Locus \left( R^G \right)$. We have 
        \[ 
            0 > (K_X+B_X) \cdot C' = \gamma^*(K_Y+B_Y) \cdot C' =(K_Y+B_Y) \cdot \gamma_*(C') 
        \]
        and $[\gamma_* (C')] \in R$ by construction of $R^G$.
        Therefore, $\gamma\big( \Locus(R^G) \big) \subseteq \Locus(R)$, so 
        \[ 
            \Locus \left( R^G \right) \subseteq \gamma^{-1} \big( \Locus(R) \big) .
        \]
        Conversely, let $C$ be an irreducible curve in $\Locus(R)$, consider its preimage $\gamma^{-1}(C)$ in $X$, and let $C_1'$ be an irreducible component of $\gamma^{-1}(C)$. We have 
        \[
            (K_X+B_X) \cdot C_1' = \gamma^*(K_Y+B_Y) \cdot C_1' = (K_Y+B_Y) \cdot \gamma_*(C_1') < 0 .
        \]
        Since this can be done for every irreducible component of $\gamma^{-1}(C)$, by construction of $R^G$ we deduce that 
        \[
            \Locus \left( R^G \right) \supseteq \gamma^{-1} \big( \Locus(R) \big) .
        \]
        This concludes the proof.
    \end{proof}
    
    \begin{lem}
        \label{lem:lifting_MMP_step}
        Let $\gamma_1 \colon X'_1 \to X_1$ be a Galois quasi-\'etale cover between normal projective varieties with numerically trivial canonical divisor. Denote by $G$ the finite subgroup of $\Aut(X'_1)$ defining $\gamma_1$ and assume that $X_1'$ is $G\Q$-factorial or, equivalently, that $X_1 \simeq X_1' /G$ is $\Q$-factorial. Let $B_1$ be an effective $\R$-divisor on $X_1$ such that $(X_1, B_1)$ is a log canonical pair, and consider the log canonical pair $(X'_1,B'_1)$, where $B'_1 \coloneqq (\gamma_1)^* B_1$. Assume that $K_{X_1} + B_1$ is not nef and consider a step of a $(K_{X_1}+B_1)$-MMP:
        \begin{center}
            \begin{tikzcd}
                (X_1, B_1) \arrow[rr, "\varphi_1", dashed] \arrow[dr, "\eta_1" swap] && (X_2, B_2) \arrow[dl, "\eta_1^+"] \\
                & Z_1 ,
            \end{tikzcd}
        \end{center}
        where $B_2 = (\varphi_1)_* B_1$.
        Then there exists a commutative diagram:
        \begin{center}
            \begin{tikzcd}
                (X'_1,B'_1) \arrow[dd, "\gamma_1" swap] \arrow[rr, "\varphi'_1", dashed] \arrow[dr, "\eta'_1" swap] && (X'_2, B'_2) \arrow[dl, "(\eta'_1)^+"] \arrow[dd, "\gamma_2"] \\
                & Z'_1 \arrow[dd, "\mu_1", pos=0.3, swap] \\
                (X_1, B_1) \arrow[rr, "\varphi_1", pos=0.65, dashed, crossing over] \arrow[dr, "\eta_1" swap] && (X_2, B_2) \arrow[dl, "\eta_1^+"] \\
                & Z_1 ,
            \end{tikzcd}
        \end{center}
        where 
        \begin{itemize}
            \item $\varphi'_1 \colon X'_1 \dashrightarrow X'_2$ is a step of a $G$-equivariant $(K_{X'_1}+B'_1)$-MMP,

            \item $\mu_1 \colon Z'_1 \to Z_1$ and $\gamma_2 \colon X'_2 \to X_2$ are Galois quasi-\'etale covers defined by $G$ between normal projective varieties with numerically trivial canonical divisor, and

            \item $B'_2 \coloneqq (\varphi'_1)_* B'_1 = (\gamma_2)^* B_2$.
        \end{itemize}
        In particular, if $\varphi_1$ is a divisorial contraction (in which case we have $X_2 = Z_1$, $(\eta_1)^+ = \Id_{Z_1}$ and $\varphi_1 = \eta_1$) or a flip, then $\varphi'_1$ is a $G$-equivariant divisorial contraction (in which case we have $X'_2 = Z'_1$, $(\eta'_1)^+ = \Id_{Z'_1}$, $\varphi'_1 = \eta'_1$ and $\mu_1 = \gamma_2$) or a $G$-equivariant flip, respectively.
    \end{lem}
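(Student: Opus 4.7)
The plan is to construct the required commutative diagram in two stages: first build $Z'_1$ and $\eta'_1$ from the given data by Stein factorization, then handle the divisorial and flipping cases separately, invoking the $G$-equivariant MMP machinery of \cite{Pro21} in the latter case.

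I would begin by forming the Stein factorization $\eta_1\circ\gamma_1 = \mu_1\circ\eta'_1$ of the projective morphism $\eta_1\circ\gamma_1\colon X'_1\to Z_1$, which produces a fibration $\eta'_1\colon X'_1\to Z'_1$ onto a normal projective variety and a finite morphism $\mu_1\colon Z'_1\to Z_1$. The $G$-action on $X'_1$ descends to $Z'_1$ by the universal property of Stein factorization, turning $\mu_1$ into a Galois cover with group $G$; it is quasi-\'etale because $\gamma_1$ is so and both $\eta_1$, $\eta'_1$ are birational. By the identity $K_{X'_1}+B'_1 \sim_\Q \gamma_1^\ast(K_{X_1}+B_1)$ from \autoref{rem:singularities_quasi-etale_cover}(ii), the morphism $\eta'_1$ contracts a $G$-invariant face of $\NEb(X'_1)$ on which $K_{X'_1}+B'_1$ is negative, and pulling back the relative ampleness of $-(K_{X_1}+B_1)$ over $Z_1$ shows that this face is a $(K_{X'_1}+B'_1)$-negative extremal ray of $\NEb(X'_1)^G$. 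The numerical triviality $K_{Z'_1}\equiv 0$ then follows from $K_{X'_1}\equiv 0$ together with \autoref{rem:singularities_quasi-etale_cover}(i).

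In the divisorial case $X_2=Z_1$, the reduced preimage $\gamma_1^{-1}(\Exc(\eta_1))$ is $G$-invariant and contracted by $\eta'_1$, so $\eta'_1$ is divisorial; I set $X'_2=Z'_1$, $(\eta'_1)^+=\Id_{Z'_1}$, $\varphi'_1=\eta'_1$ and $\gamma_2=\mu_1$, and all required properties follow immediately. In the flipping case $\eta'_1$ is small, and I apply \cite[Theorems 3.4.2 and 3.4.3]{Pro21} to produce the $G$-equivariant flip $\varphi'_1\colon X'_1\dashrightarrow X'_2$ together with $(\eta'_1)^+\colon X'_2\to Z'_1$, on which $G$ acts by construction. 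The remaining task, which I expect to be the main obstacle, is to identify $X'_2/G$ with $X_2$ and build the quasi-\'etale Galois cover $\gamma_2\colon X'_2\to X_2$: using that both $X_2$ and $X'_2$ are relative log canonical models of the respective pairs over their bases, the $G$-invariants of the relative log canonical algebra of $(X'_1,B'_1)$ over $Z'_1$ coincide with the relative log canonical algebra of $(X_1,B_1)$ over $Z_1$, and taking relative $\Proj$ on both sides yields the desired finite morphism $\gamma_2$. Since $\varphi'_1$ is an isomorphism in codimension one, the free-in-codimension-one $G$-action on $X'_1$ passes to $X'_2$, so $\gamma_2$ is quasi-\'etale; finally $K_{X'_2}\equiv 0$ by \autoref{rem:G-equivariant_K-trivial_canonical_sing}(iii), and the identities $B'_2=(\varphi'_1)_\ast B'_1=\gamma_2^\ast B_2$ follow from the $G$-equivariance of $\varphi'_1$ combined with \autoref{rem:singularities_quasi-etale_cover}(ii).
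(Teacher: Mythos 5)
Your proposal is correct, but it reaches the diagram by a genuinely different route from the paper's. The paper works from the bottom up: it picks an integral curve $C_1$ spanning the contracted extremal ray of $\NEb(X_1)$, lifts it to a curve $C_1'$ on $X_1'$, shows that the $G$-average $\left[(C_1')^G\right]$ spans a $B_1'$-negative extremal ray of $\NEb(X_1')^G$ (otherwise a decomposition would push forward to a decomposition of $[C_1]$), invokes \cite{Pro21} to produce the $G$-equivariant contraction or flip upstairs, and only then descends by taking $G$-quotients, identifying the quotient with the given step via uniqueness of the divisorial contraction, respectively uniqueness of the flip (relative ampleness of $\tfrac{1}{d}(\gamma_2)_*B_2'$). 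You instead manufacture $Z_1'$ and $\eta_1'$ directly by Stein factorization of $\eta_1\circ\gamma_1$, which has the advantage that $Z_1'/G\simeq Z_1$ and the commutativity of the lower square come for free from $\big((\eta_1)_*(\gamma_1)_*\OO_{X_1'}\big)^G=\OO_{Z_1}$, and you identify $X_2'/G$ with $X_2$ through $G$-invariants of the relative log canonical algebra rather than through uniqueness of the flip; both identifications are legitimate. The one place where your write-up is thinner than it should be is the assertion that the face contracted by $\eta_1'$ meets $\NEb(X_1')^G$ in a single $(K_{X_1'}+B_1')$-negative extremal ray: pulling back relative ampleness only gives negativity and extremality of the face $F=(\gamma_1)_*^{-1}\big(\R_{\geq 0}[C_1]\big)\cap\NEb(X_1')$; that $F\cap N_1(X_1')^G$ is one-dimensional needs the standard fact that $(\gamma_1)_*$ restricts to an isomorphism $N_1(X_1')^G\to N_1(X_1)$ (this is exactly what the paper's averaging of $[C_1']$ encodes). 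With that point made explicit, your $\eta_1'$ is the contraction of a $G$-extremal ray in the sense of \cite{Pro21}, the flip existence theorems apply, and the rest of your argument goes through.
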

    
    \begin{proof}
        By \autoref{rem:singularities_quasi-etale_cover}(ii) we know that $(X_1',B_1')$ is a log canonical pair such that 
        \[ K_{X_1'} + B_1' = (\gamma_1)^* (K_{X_1} + B_1) . \]
        Set $d\coloneqq |G|$, observe that $\gamma_1$ is a finite morphism of degree $d$, and note that $B_1' = (\gamma_1)^* B_1$ is a $G$-invariant divisor; indeed, since $\gamma_1$ is Galois and \'{e}tale in codimension one, for any irreducible component $P$ of $B_1$ we have $\gamma_1^* (P)=\sum_{k=1}^d P'_k$, where 
        $\big\{ P_k' \mid 1 \leq k \leq d \big\}$ 
        is the $G$-orbit of a prime divisor $P_1'$ on $X_1'$ which maps onto $P$ under $\gamma_1$.
    
        Observe now that the given $(K_{X_1}+B_1)$-MMP step $\varphi_1 \colon (X_1,B_1)\dashrightarrow (X_2,B_2)$ is $B_1$-negative, since $K_{X_1} \equiv 0$. Denote by $C_1$ an integral curve on $X_1$ whose class spans the $(K_{X_1}+B_1)$-negative extremal ray $R_1$ that corresponds to $\varphi_1$. By \autoref{lem:invariant_extremal_ray}(iii) we obtain a $(K_{X_1'}+B_1')$-negative extremal ray $R_1^G$ of $\NEb(X_1')^G$ such that $\Locus \left( R_1^G \right) = \gamma_1^{-1} \big( \Locus(R_1) \big)$, and by \autoref{lem:invariant_extremal_ray}(ii) we know that if $C_1'$ is any irreducible curve on $X_1'$ mapping onto $C_1$, then the class $\big[ \sum_{g\in G}g(C_1') \big]$ spans the extremal ray $R^G$.
        Therefore, due to \cite[Theorems 3.3.1, 3.3.2 and 3.4.2]{Pro21}, there exists either a $G$-equivariant divisorial contraction: 
        \begin{center}
    		\begin{tikzcd}[column sep = 2em, row sep = large]
    			(X_1',B_1') \arrow[dr, "\eta'_1" swap] \arrow[rr, "\varphi'_1 \, \coloneqq \, \theta'_1"] && (X_2',B_2') \arrow[dl, "(\eta_1')^+ \, \coloneqq \, \Id_{X_2'}"]  \\
    			& Z'_1 = X_2' ,
    		\end{tikzcd}
    	\end{center}
        or a $G$-equivariant $(K_{X_1'}+B_1')$-flip (which is a $G$-equivariant $B_1'$-flop):
        \begin{center}
    		\begin{tikzcd}[column sep = 2em, row sep = large]
    			(X_1',B_1') \arrow[dr, "\eta'_1" swap] \arrow[rr, dashed, "\varphi'_1"] && (X_2',B_2') \arrow[dl, "(\eta_1')^+"]  \\
    			& Z'_1 ,
    		\end{tikzcd}
    	\end{center}
        associated with the extremal ray $R_1^G$. Note that, in any case, the following hold: 
        \[ K_{X'_2} \equiv 0 \] 
        by \autoref{rem:G-equivariant_K-trivial_canonical_sing}(iii),
        \begin{equation}
            \label{eq:2_lifting_MMP}
            B_2'\coloneq (\varphi'_1)_* B_1' \ \text{ is } \, (\eta'_1)^+ \text{-ample}
        \end{equation}
        by construction, and $G$ acts freely in codimension one on $X_1'$, $Z_1'$ and $X_2'$ by \autoref{rem:Galois_and_quasi-etale}(ii), see also \autoref{subsection:MMP_step}. In particular, the quotient varieties $X_1' / G \simeq X_1$ and $X_2' / G$ have numerically trivial canonical divisor by \autoref{rem:singularities_quasi-etale_cover}(i), and the quotient maps 
        \[  
            \gamma_1 \colon X_1' \to X_1' / G , \quad \mu_1 \colon Z_1' \to Z_1' / G , \ \text{ and } \ \gamma_2 \colon X_2' \to X_2' / G
        \]
        are all Galois quasi-\'etale covers of the same degree $d$.
        
        We now distinguish two cases:
        
        \medskip

        \emph{Case A}: 
        If the obtained map $\varphi'_1 \colon X_1' \to X_2'$ is a $G$-equivariant divisorial contraction, by considering the quotients $X_1' / G \simeq X_1$ and $X_2' / G$ we get a unique birational morphism $\psi_1 \colon X_1' / G \to X_2' / G$ such that the following diagram commutes:
        \begin{center}
            \begin{tikzcd}[row sep = large, column sep = large]
                X_1' \arrow[d, "\gamma_1" swap] \arrow[r, "\varphi'_1"] & X_2' \arrow[d, "\gamma_2"] \\
                X_1' / G \arrow[r, "\psi_1" swap] & X_2'/G .
            \end{tikzcd}
        \end{center}
        By construction, the map
        $\psi_1 \colon X_1' / G \to X_2' / G$ contracts the extremal ray $R_1 = \R_{\geq 0}[C_1]$, so it coincides with the given divisorial contraction $\varphi_1 \colon X_1 \to X_2$ by \cite[Proposition 1.14(b)]{Debarre01book}.
        This completes the proof in this case, where we clearly also have that $\mu_1 = \gamma_2$.
        
        \medskip

        \emph{Case B}: 
        If the obtained map $\varphi'_1 \colon X_1' \dashrightarrow X_2'$ is a $G$-equivariant flip, by using the fact that both $\eta_1'$ and $(\eta_1')^+$ are $G$-equivariant and by considering the quotients $X_1' / G \simeq X_1$, $X_2' / G$ and $Z_1' / G$ we get unique birational morphisms $\zeta_1 \colon X_1' / G \to Z_1' / G$ and $\zeta_1^+ \colon X_2' / G \to Z_1' / G$ such that the following diagram commutes:
        \begin{center}
            \begin{tikzcd}
                X'_1 \arrow[dd, "\gamma_1" swap] \arrow[rr, "\varphi'_1", dashed] \arrow[dr, "\eta'_1" swap] && X'_2 \arrow[dl, "(\eta'_1)^+"] \arrow[dd, "\gamma_2"] \\
                & Z'_1 \arrow[dd, "\mu_1", pos=0.3, swap] \\
                X_1' / G \arrow[rr, "\psi_1", pos=0.65, dashed, crossing over] \arrow[dr, "\zeta_1" swap] && X_2' / G \arrow[dl, "\zeta_1^+"] \\
                & Z_1' / G ,
            \end{tikzcd}
        \end{center}
        and in this case we set $\psi_1 \coloneqq (\zeta_1)^+ \circ \zeta_1^{-1} \colon X_1' / G \dashrightarrow X_2' / G$. By construction, the map
        $\zeta_1 \colon X_1' / G \to Z_1' / G$ contracts the extremal ray $R_1 = \R_{\geq 0}[C_1]$, so it coincides with the given flipping contraction $\theta_1 \colon X_1 \to Z_1$. Moreover, since $\gamma_1$ and $\gamma_2$ are finite surjective morphisms of degree $d$, we have
        \[
            (\varphi_1)_* B_1 = \frac{1}{d} (\gamma_2)_*(B_2') ,
        \]
        which is ample over $Z_1'/G$ by \eqref{eq:2_lifting_MMP}. By uniqueness of a $(K_{X_1}+B_1)$-flip (or a $B_1$-flop), we conclude that the map $\psi_1 \colon X_1 \simeq X_1' / G \dashrightarrow X_2' / G$ coincides with $\varphi_1 \colon X_1 \dashrightarrow X_2$. This completes the proof in this case as well.
    \end{proof}
    
    \begin{prop}
        \label{prop:lifting_MMP}
        Let $\gamma_1\colon X'_1\to X_1$ be a Galois quasi-\'etale cover between normal projective varieties with numerically trivial canonical divisor. Denote by $G$ the finite subgroup of $\Aut(X'_1)$ defining $\gamma_1$ and assume that $X_1'$ is $G\Q$-factorial or, equivalently, that $X_1 \simeq X_1' /G$ is $\Q$-factorial. Let $B_1$ be an effective $\R$-divisor on $X_1$ such that $(X_1, B_1)$ is a log canonical pair, and consider the log canonical pair $(X'_1,B'_1)$, where $B'_1 \coloneqq (\gamma_1)^* B_1$. Consider a $(K_{X_1}+B_1)$-MMP:
        \begin{center}
            \begin{tikzcd}[column sep = 2em]
            (X_1,B_1) \arrow[rr, dashed, "\varphi_1"] && (X_2,B_2) \arrow[rr, dashed, "\varphi_2"] && (X_3,B_3) \arrow[rr, dashed, "\varphi_3"] && \dots 
            \end{tikzcd}
        \end{center}
        Then there exists a commutative diagram:
        \begin{center}
    		\begin{tikzcd}[column sep = 2em, row sep = large]
    			(X_1',B_1') \arrow[d, "\gamma_1" swap] \arrow[rr, dashed, "\varphi'_1"] && (X_2',B_2') \arrow[d, "\gamma_2" swap] \arrow[rr, dashed, "\varphi'_2"] && (X_3',B_3') \arrow[d, "\gamma_3" swap] \arrow[rr, dashed, "\varphi'_3"] && \dots 
    			\\ 
    			(X_1,B_1) \arrow[rr, dashed, "\varphi_1"] && (X_2,B_2) \arrow[rr, dashed, "\varphi_2"] && (X_3,B_3) \arrow[rr, dashed, "\varphi_3"] && \dots
    		\end{tikzcd}
	    \end{center}
        where, for each $i\geq 1$, the map $\gamma_i \colon X_i' \to X_i$ is a Galois quasi-\'etale cover of $X_i$, the map $\varphi'_i \colon X_i'\dashrightarrow X_{i+1}'$ is a step of a $G$-equivariant $(K_{X_i'}+B_i')$-MMP, and $B_i' = \gamma_i^* B_i = (\varphi_i')_* (B_{i-1}')$. In particular, if $\varphi_i \colon X_i \dashrightarrow X_{i+1}$ is a divisorial contraction or a flip, then $\varphi'_i \colon X_i'\dashrightarrow X_{i+1}'$ is a $G$-equivariant divisorial contraction or a $G$-equivariant flip, respectively, and the sequence on top of the above diagram is a $G$-equivariant $(K_{X_1'} + B_1')$-MMP.
    \end{prop}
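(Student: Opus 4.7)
The plan is to construct the lifted diagram inductively, iterating \autoref{lem:lifting_MMP_step}. The base case is supplied by the hypothesis: the cover $\gamma_1 \colon X_1' \to X_1$ is a quasi-\'etale Galois cover defined by $G$, both varieties have numerically trivial canonical divisor, $(X_1,B_1)$ is log canonical, and $B_1' = \gamma_1^* B_1$, so that $(X_1',B_1')$ is log canonical by \autoref{rem:singularities_quasi-etale_cover}(ii).

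For the inductive step, I assume that the first $i$ commutative squares have been constructed, and in particular that $\gamma_i \colon X_i' \to X_i$ is a quasi-\'etale Galois cover defined by $G$ between normal projective varieties with numerically trivial canonical divisor, with $B_i' = \gamma_i^* B_i$. I then apply \autoref{lem:lifting_MMP_step} directly to the $i$-th step $\varphi_i \colon (X_i,B_i) \dashrightarrow (X_{i+1},B_{i+1})$. The lemma produces a $G$-equivariant $(K_{X_i'}+B_i')$-MMP step $\varphi_i' \colon (X_i',B_i') \dashrightarrow (X_{i+1}',B_{i+1}')$ together with a quasi-\'etale Galois cover $\gamma_{i+1} \colon X_{i+1}' \to X_{i+1}$ defined by $G$ between normal projective varieties with numerically trivial canonical divisor, satisfying $B_{i+1}' = (\varphi_i')_* B_i' = \gamma_{i+1}^* B_{i+1}$. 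This is precisely the induction hypothesis for the next iteration, so the construction continues.

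Concatenating the resulting squares yields the desired commutative diagram. The last sentence of the proposition is then automatic: whenever $\varphi_i$ is a divisorial contraction (respectively, a flip), \autoref{lem:lifting_MMP_step} already guarantees that $\varphi_i'$ is a $G$-equivariant divisorial contraction (respectively, a $G$-equivariant flip), so the top row is by construction a $G$-equivariant $(K_{X_1'}+B_1')$-MMP.

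I do not anticipate any real obstacle in executing this plan, since all the delicate work, namely producing the $(K_{X_i'}+B_i')$-negative $G$-invariant extremal ray by averaging the class of a suitable curve over $G$ and using uniqueness of flips and divisorial contractions to identify the relevant quotient with the given step $\varphi_i$, has already been carried out in \autoref{lem:lifting_MMP_step}. The only verification to perform at each iteration is that the induction hypothesis is preserved, and this is built into the output of the lemma.
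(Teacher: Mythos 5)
Your proposal is correct and follows exactly the paper's own argument: the paper's proof is the single sentence that the statement is established by repeating the procedure of \autoref{lem:lifting_MMP_step} step by step, and your write-up simply makes the induction explicit, correctly noting that the lemma's output (a quasi-\'etale Galois cover defined by $G$ between normal projective varieties with numerically trivial canonical divisor, with $B_{i+1}'=\gamma_{i+1}^*B_{i+1}$) reproduces the hypotheses needed for the next application.
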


    \begin{proof}
        The statement is established by applying step by step the procedure described in \autoref{lem:lifting_MMP_step}.
    \end{proof}

    Finally, according to \cite[Proposition 4.2.2]{Pro21}, if any usual MMP (that is, there is no group action involved) for $\Q$-factorial dlt pairs terminates, then any $G$-equivariant MMP for $G\Q$-factorial dlt $G$-pairs terminates as well. Using similar ideas, together with recent developments in the MMP for log canonical pairs, we prove the following statement, which refines \cite[Proposition 4.2.2]{Pro21}.
    
    \begin{prop}
        \label{prop:G-equivariant_termination}
        Let $(X_1,B_1)$ be a log canonical $G$-pair. Consider a $G$-equivariant $(K_{X_1}+B_1)$-MMP:
        \begin{center}
    		\begin{tikzcd}[column sep = 2em, row sep = large]
    			(X_1,B_1) \arrow[dr, "\eta_1" swap] \arrow[rr, dashed, "\varphi_1"] && (X_2,B_2) \arrow[dl, "\eta_1^+"] \arrow[dr, "\eta_2" swap] \arrow[rr, dashed, "\varphi_2"] && (X_3,B_3) \arrow[dl, "\eta_2^+"] \arrow[rr, dashed, "\varphi_3"] && \dots \\
    			& Z_1 && Z_2
    		\end{tikzcd}
    	\end{center}
        Then there exists a commutative diagram:
        \begin{center}
        	\begin{tikzcd}[column sep = 2em, row sep = large]
        		(X_1',B_1') \arrow[d, "f_1" swap] \arrow[rr, dashed, "\rho_1"] && (X_2',B_2') \arrow[d, "f_2" swap] \arrow[rr, dashed, "\rho_2"] && (X_3',B_3') \arrow[d, "f_3" swap] \arrow[rr, dashed, "\rho_3"] && \dots 
        		\\ 
        		(X_1,B_1) \arrow[dr, "\eta_1" swap] \arrow[rr, dashed, "\varphi_1"] && (X_2,B_2) \arrow[dl, "\eta_1^+"] \arrow[dr, "\eta_2" swap] \arrow[rr, dashed, "\varphi_2"] && (X_3,B_3) \arrow[dl, "\eta_2^+"] \arrow[rr, dashed, "\varphi_3"] && \dots 
        		\\
        		& Z_1 && Z_2
        	\end{tikzcd}
        \end{center}
        where $(X_1',B_1') = (X_1,B_1)$, $f_1 = \Id_{X_1}$, and for each $i \geq 1$,
        \begin{itemize}
            \item the map $ \rho_i \colon X_i' \dashrightarrow X_{i+1}' $ is a $(K_{X_i'}+B_i')$-MMP over $ Z_i $, and

            \item the map $f_i \colon X_i' \to X_i$ is a projective birational morphism such that 
            \[ 
                K_{X_i'} + B_i' \sim_\R f_i^* (K_{X_i} + B_i) . 
            \]
        \end{itemize}
        In particular, the sequence on top of the above diagram is a $(K_{X_1} + B_1) = (K_{X_1'} + B_1')$-MMP.
        
        Consequently, if any usual $(K_{X_1}+B_1)$-MMP terminates, then also any $G$-equivariant $(K_{X_1}+B_1)$-MMP terminates.
    \end{prop}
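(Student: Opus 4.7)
The plan is to construct the diagram inductively by running, at each stage, a relative good log minimal model program for the log canonical pair $(X_i',B_i')$ over the base $Z_i$ of the $G$-equivariant contraction, in analogy with the proof of \cite[Proposition 4.2.2]{Pro21}. Set $(X_1',B_1') \coloneqq (X_1,B_1)$ and $f_1 \coloneqq \Id_{X_1}$, which trivially verify the required properties.

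For the inductive step, assume $(X_j',B_j')$ and $f_j \colon X_j' \to X_j$ with $K_{X_j'}+B_j' \sim_\R f_j^*(K_{X_j}+B_j)$ have been constructed for all $j \leq i$. Viewing $(X_i',B_i')$ as a log canonical pair projective over $Z_i$ via $\eta_i \circ f_i$, the discussion in \autoref{rem:G-equivariant_K-trivial_canonical_sing}(ii) (which relies on \cite[Corollary 3.7]{Bir12a} and \cite[Theorem 1.7]{HH20}) shows that any $(K_{X_i'}+B_i')$-MMP over $Z_i$ with scaling of an ample divisor terminates with a good minimal model $(X_{i+1}',B_{i+1}')$ over $Z_i$; call this finite sequence of MMP steps $\rho_i$. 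Since $(X_{i+1},B_{i+1})$ is the canonical model of $(X_i,B_i)$, and hence of the crepantly equivalent pair $(X_i',B_i')$, over $Z_i$ by \autoref{rem:G-equivariant_K-trivial_canonical_sing}(i), \cite[Lemma 4.8.4]{Fuj17book} produces a projective birational morphism $f_{i+1} \colon X_{i+1}' \to X_{i+1}$ with $K_{X_{i+1}'}+B_{i+1}' \sim_\R f_{i+1}^*(K_{X_{i+1}}+B_{i+1})$, completing the induction. Each step of $\rho_i$ is a $(K_{X_i'}+B_i')$-negative divisorial contraction or flip over $Z_i$, and thus a step of a usual $(K_{X_1'}+B_1')$-MMP, so the concatenation of the $\rho_i$'s is a usual MMP on $X_1' = X_1$.

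The termination assertion then follows once we observe that each $\rho_i$ contains at least one step. Indeed, if $\rho_i$ were trivial, then $K_{X_i'}+B_i' \sim_\R f_i^*(K_{X_i}+B_i)$ would be simultaneously nef and anti-nef over $Z_i$, hence numerically trivial over $Z_i$; pushing forward by the surjective morphism $f_i$ would then yield $K_{X_i}+B_i \equiv_{\eta_i} 0$, contradicting the fact that $\eta_i$ contracts a $(K_{X_i}+B_i)$-negative extremal ray. Consequently, an infinite $G$-equivariant MMP would force the concatenated usual MMP on $X_1$ to be infinite, so the assumed termination of the latter implies termination of the former. The main obstacle throughout is to establish the existence and crepancy of $f_{i+1}$ in each inductive step, which rests on recognizing $(X_{i+1},B_{i+1})$ as a relative canonical model over $Z_i$ and invoking Fujino's lemma to match it with the relative good minimal model $(X_{i+1}',B_{i+1}')$ supplied by the recent advances in the log canonical MMP.
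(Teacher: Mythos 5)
Your proposal is correct and follows essentially the same route as the paper: set $(X_1',B_1')=(X_1,B_1)$, inductively run a relative $(K_{X_i'}+B_i')$-MMP with scaling over $Z_i$ (whose termination with a good minimal model comes from \autoref{rem:G-equivariant_K-trivial_canonical_sing}, i.e.\ from Birkar and Hashizume--Hu), and use the fact that $(X_{i+1},B_{i+1})$ is the canonical model of the crepantly equivalent pair $(X_i',B_i')$ over $Z_i$ together with \cite[Lemma 4.8.4]{Fuj17book} to produce the crepant morphism $f_{i+1}$. Your explicit verification that each $\rho_i$ consists of at least one step (since $f_i^*(K_{X_i}+B_i)$ is anti-nef but not numerically trivial over $Z_i$) is a point the paper leaves implicit, and it is a welcome addition.
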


    \begin{proof}
        To prove the assertions, we follow the strategy of the proof of \cite[Theorem 3.2]{TX24} in the setting of usual pairs. However, instead of beginning the lifting construction in op.\ cit.\ with a dlt blow-up of $(X_1,B_1)$, we set $(X_1', B_1') \coloneqq (X_1,B_1)$ and $f_1 \coloneqq \Id_{X_1}$, and we proceed as follows.
        
        By \autoref{rem:G-equivariant_K-trivial_canonical_sing}(i)(ii) there exists a $ (K_{X_1'} + B_1') $-MMP over $Z_1$ with scaling of an ample divisor which terminates with a minimal model $ (X_2',B_2') $ of $ (X_1',B_1') $ over $Z_1$, yielding a map $ \rho_1 \colon X_1' \dashrightarrow X_2' $, and a projective birational morphism $ f_2 \colon X_2' \to X_2 $ such that 
		\begin{equation}
            \label{eq:1_G-equiv_termination}
			K_{X_2'} + B_2' \sim_\R f_2^* (K_{X_2} + B_2) .
		\end{equation}
        We obtain thus the following diagram:
        \begin{center}
        	\begin{tikzcd}[column sep = 2em, row sep = large]
        		(X_1,B_1) \arrow[d, "\Id_{X_1}" swap] \arrow[rr, dashed, "\rho_1"] && (X_2',B_2') \arrow[d, "f_2" swap] 
        		\\ 
        		(X_1,B_1) \arrow[dr, "\eta_1" swap] \arrow[rr, dashed, "\varphi_1"] && (X_2,B_2) \arrow[dl, "\eta_1^+"] \arrow[dr, "\eta_2" swap] \arrow[rr, dashed, "\varphi_2"] && (X_3,B_3) \arrow[dl, "\eta_2^+"] \arrow[rr, dashed, "\varphi_3"] && \dots 
        		\\
        		& Z_1 && Z_2
        	\end{tikzcd}
        \end{center}
        and we want to repeat the same procedure with $(X_2',B_2')$ in place of $(X_1',B_1') = (X_1,B_1)$. To this end, we first note that $(X_3,B_3)$ is the canonical model of $(X_2', B_2')$ over $Z_2$, taking \autoref{rem:G-equivariant_K-trivial_canonical_sing}(i) and \eqref{eq:1_G-equiv_termination} into account. Therefore, again by \autoref{rem:G-equivariant_K-trivial_canonical_sing}(i)(ii) there exists a $ (K_{X_2'} + B_2') $-MMP over $Z_2$ with scaling of an ample divisor which terminates with a minimal model $ (X_3',B_3') $ of $ (X_2',B_2') $ over $Z_2$, yielding a map $\rho_2 \colon X_2' \dashrightarrow X_3'$, and a projective birational morphism $ f_3 \colon X_3' \to X_3 $ such that 
        \[ 
            K_{X_3'} + B_3' \sim_\R f_3^* (K_{X_3} + B_3) .
        \]
        
        By repeating the above procedure, we eventually obtain the following diagram:
        \begin{center}
        	\begin{tikzcd}[column sep = 2em, row sep = large]
        		(X_1',B_1') = (X_1,B_1) \arrow[d, "f_1 = \Id_{X_1}" swap] \arrow[rr, dashed, "\rho_1"] && (X_2',B_2') \arrow[d, "f_2" swap] \arrow[rr, dashed, "\rho_2"] && (X_3',B_3') \arrow[d, "f_3" swap] \arrow[rr, dashed, "\rho_3"] && \dots 
        		\\ 
        		(X,B) = (X_1,B_1) \arrow[dr, "\eta_1" swap] \arrow[rr, dashed, "\varphi_1"] && (X_2,B_2) \arrow[dl, "\eta_1^+"] \arrow[dr, "\eta_2" swap] \arrow[rr, dashed, "\varphi_2"] && (X_3,B_3) \arrow[dl, "\eta_2^+"] \arrow[rr, dashed, "\varphi_3"] && \dots 
        		\\
        		& Z_1 && Z_2
        	\end{tikzcd}
        \end{center}
        where, for each $i \geq 1$, the map $ \rho_i \colon X_i' \dashrightarrow X_{i+1}' $ is a $(K_{X_i'}+B_i')$-MMP over $ Z_i $ with scaling of an ample divisor and the map $f_i \colon X_i' \to X_i$ is a projective birational morphism such that 
        $ K_{X_i'} + B_i' \sim_\R f_i^* (K_{X_i} + B_i) $.
        In particular, the sequence on top of the above diagram is a $(K_{X_1'} + B_1')$-MMP. 
        
        We have thus established the first part of the statement. The last part of the statement follows immediately from the first one and from our construction.
    \end{proof}

    \section{Symplectic varieties}
    \label{section:symplectic_varieties}

    \subsection{Definitions and basic properties}
    \label{subsection:def_symplectic}
    
    A \emph{holomorphic symplectic manifold} is a complex manifold $X$ which carries a \emph{holomorphic symplectic form}, i.e., a holomorphic $2$-form $\sigma \in H^0 (X, \Omega^2_X)$ which is closed and everywhere non-degenerate. Note that if $(X,\sigma)$ is a holomorphic symplectic manifold, then its complex dimension is even, say $\dim_\C X = 2n$, and the top exterior power $\sigma^n \in H^0 (X, \Omega^{2n}_X)$ of $\sigma$ is nowhere vanishing, 
    whence $\omega_X \simeq \OO_X$. 
    An \emph{irreducible holomorphic symplectic} (IHS) \emph{manifold} is a simply connected, compact K\"ahler manifold $X$ such that $H^0 (X, \Omega^2_X) = \C \cdot \sigma$, where $\sigma$ is a holomorphic symplectic form on $X$.
    
    \medskip
    
    In the remainder of this subsection we discuss singular analogs of the aforementioned classes of complex manifolds.

    \subsubsection{Symplectic varieties}
    
    A \emph{symplectic form} on a normal variety $X$ of dimension at least two is a reflexive $2$-form $\sigma \in H^0 \big( X, \Omega_X^{[2]} \big)$ such that $\sigma |_{X_\text{reg}}$ is closed and everywhere non-degenerate. 
    The next definition was introduced by Beauville \cite{Beauville00a}.
    
    \begin{dfn}
        \label{dfn:symplectic_variety}
        A pair $(X,\sigma)$, where $X$ is a normal variety of dimension $\dim X \geq 2$ and $\sigma$ is a symplectic form on $X$, is called a \emph{symplectic variety} if the holomorphic symplectic form $\sigma |_{X_\text{reg}}$ extends to a holomorphic $2$-form on any resolution of singularities of $X$.
    \end{dfn}

    Consider a pair $(X,\sigma)$, where $X$ is a normal variety and $\sigma$ is a symplectic form on $X$. Note that $\dim X = 2n$ for some $n \geq 1$. Observe also that $K_X \sim 0$, since $(\sigma |_{X_\text{reg}})^n$ trivializes the canonical bundle $\omega_{X_\text{reg}}$ of the regular locus $X_\text{reg}$ of $X$ and since $\codim_X \big( X \setminus X_\text{reg} \big) \geq 2$.
    To examine whether $(X,\sigma)$ is a symplectic variety, it suffices to check whether $\sigma |_{X_\text{reg}}$ extends regularly to one resolution of singularities of $X$, since any two resolutions are dominated by a common one.

    If $f \colon W \to X$ is a resolution of singularities of a symplectic variety $(X,\sigma)$ and if the extended holomorphic $2$-form $f^* \sigma$ on $W$ is symplectic, then we say that $f$ is a \emph{symplectic resolution} of $X$. For a discussion about symplectic resolutions we refer to \cite{Fu06} and \cite[Remark 3.3(2) and Example 3.4(5)]{BL21}. Here, we only recall that a resolution of singularities of a symplectic variety is symplectic if and only if it is crepant, and that symplectic resolutions do not always exist.

    If $(X,\sigma)$ is a symplectic variety, then a subvariety $L$ of $X$ is called \emph{Lagrangian} with respect to $\sigma$ if $\dim L = \frac{1}{2} \dim X$, $L \cap X_{\text{reg}} \neq \emptyset$ and $\sigma |_{L_\text{reg} \cap X_{\text{reg}}} = 0$; see \cite[Lemma 15]{Schwald20}. When $L$ is smooth, we also say that $L$ is a \emph{Lagrangian submanifold} of $X$.
    
    Now, in order to give an important characterization of symplectic varieties, we recall the following facts: According to \cite[Corollary 5.24]{KM98}, a normal variety $V$ such that $K_V$ is Cartier has rational singularities if and only if it has canonical singularities, while $V$ is Cohen--Macaulay and canonical of index $1$ if and only if it has rational Gorenstein singularities; see \cite[p.\ 286]{Reid80}. In view of the above and since rational singularities are Cohen-Macaulay by \cite[Theorem 5.10]{KM98}, we can apply \cite[Proposition 1.3]{Beauville00a} and \cite[Theorem 4]{Nam01b} to deduce the following statement: \emph{A normal variety is symplectic if and only if it has rational singularities and admits a symplectic form}. 
    Thus, symplectic varieties have canonical singularities, and hence they are not uniruled by \autoref{lem:uniruled_K-trivial}. 
    
    Finally, we recall the following well-known lemma, whose proof relies essentially on the aforementioned characterization of symplectic varieties.
    
    \begin{lem}
        \label{lem:symplectic_under_maps}
        Let $(X, \sigma)$ be a symplectic variety.
        \begin{enumerate}[\normalfont (i)]
            \item If $\gamma \colon X'\to X$ is a quasi-\'etale cover of $(X, \sigma)$, then $(X', \sigma')$ is also a symplectic variety, where $\sigma' \coloneqq \gamma^{[*]} \sigma \in H^0 \big( X',\Omega_{X'}^{[2]} \big)$.

            \item If $f \colon X \to X'$ is a proper birational morphism to a normal variety $X'$, then $X'$ is also a symplectic variety.

            \item If $g \colon X \dashrightarrow X''$ is a birational contraction to a normal variety $X''$ with rational singularities, then $X''$ is also a symplectic variety.
        \end{enumerate}
    \end{lem}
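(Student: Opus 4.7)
The plan is to verify in each case that the target variety meets the Beauville--Namikawa criterion recalled just before the statement: it must have rational singularities and admit a symplectic form on its smooth locus. I would therefore split each argument into (a) a singularity check and (b) the construction and verification of a candidate symplectic form. Two elementary facts will be used repeatedly: the degeneracy locus of a holomorphic $2$-form on a smooth variety is the zero locus of its Pfaffian, hence either empty or a divisor; and a reflexive form on a normal variety is determined by its restriction to any open subset with complement of codimension at least two.

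For part (i), \autoref{rem:singularities_quasi-etale_cover}(i) immediately gives that $X'$ has canonical, and therefore rational, singularities. By purity of the branch locus, $\gamma$ is étale over $X_{\mathrm{reg}}$, so on the big open subset $\gamma^{-1}(X_{\mathrm{reg}}) \cap X'_{\mathrm{reg}}$ the reflexive pullback $\sigma'$ coincides with the ordinary pullback of $\sigma|_{X_{\mathrm{reg}}}$ and is therefore closed and nowhere degenerate. By the two facts above, $\sigma'|_{X'_{\mathrm{reg}}}$ is a symplectic form, and Namikawa's criterion concludes.

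For part (iii), rational singularities of $X''$ are hypothesized, so only the symplectic form needs attention. By definition of a birational contraction, there is an open $U \subseteq X''$ with $\codim_{X''}(X'' \setminus U) \geq 2$ such that $g^{-1}$ is defined on $U$ and restricts to an isomorphism $g^{-1}(U) \to U$. Transferring $\sigma$ via this isomorphism and extending reflexively produces $\sigma'' \in H^0(X'', \Omega^{[2]}_{X''})$, whose restriction to $X''_{\mathrm{reg}}$ is symplectic by the Pfaffian-plus-codimension-two argument and the analogous propagation of closedness from a dense open.

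Part (ii) is the main obstacle, since rational singularities of $X'$ are not given. I would proceed as follows: \autoref{rem:contraction_from_K-trivial} gives $K_{X'} \sim_{\Q} 0$ and that $f$ is crepant. Picking a resolution $p \colon W \to X$ and setting $q \coloneqq f \circ p \colon W \to X'$, I would compute $K_W - q^* K_{X'}$ using the factorization and the identity $p^* K_X = q^* K_{X'}$; along any $q$-exceptional prime divisor, the coefficient is either a discrepancy of $X$ (nonnegative by canonicity of $X$) or zero (in the case of the strict transform of an $f$-exceptional divisor, since $K_X \sim 0$). Hence $X'$ has canonical, and therefore rational, singularities. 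Finally, any projective birational morphism is a birational contraction (its inverse does not contract divisors, since every divisor on the target pulls back to a divisor on the source), so the construction of the symplectic form from part (iii) applies verbatim to $f$, completing part (ii).
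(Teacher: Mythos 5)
Your proposal is correct and follows exactly the route the paper indicates: the lemma is stated there as well-known, with the only hint being that its proof "relies essentially on" the characterization of symplectic varieties as normal varieties with rational singularities admitting a symplectic form, and you organize all three parts around precisely that criterion (with the singularity checks via \autoref{rem:singularities_quasi-etale_cover}, resp.\ crepancy and canonicity as in \autoref{rem:contraction_from_K-trivial}, and the form transported across a big open subset and controlled by the Pfaffian/codimension-two argument). The only cosmetic point is that in part (ii) the vanishing of the coefficient along the strict transform of an $f$-exceptional divisor comes from its not being $p$-exceptional rather than from $K_X \sim 0$ directly, and one should take $W$ to be a log resolution so that checking discrepancies on a single model suffices; neither affects the argument.
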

    
    \subsubsection{Irreducible symplectic varieties}
    
    The following subclass of symplectic varieties was introduced by Greb, Kebekus and Peternell \cite{GKP16b}.
    
    \begin{dfn}
        \label{dfn:IS_variety}
        A projective symplectic variety $(X,\sigma)$ is called \emph{irreducible} if for every quasi-\'etale cover $\gamma \colon W \to X$ of $X$, in particular for $X$ itself, the exterior algebra of global reflexive forms on $W$ is generated by the reflexive pullback $\gamma^{[*]} \sigma$:
		\[ \bigoplus_{p=0}^{2n} H^0 \big( W, \Omega_W^{[p]} \big) = \C [\gamma^{[*]} \sigma] . \]
    \end{dfn}

    Irreducible symplectic varieties appear as factors in the singular version of the Beauville-Bogo\-molov decomposition theorem, which was established by H\"oring and Peternell \cite{HoerPet19}, building on earlier works of Druel \cite{Druel18} and Greb, Guenancia and Kebekus \cite{GGK19}. They are thus one of the three building blocks of normal projective varieties with klt singularities and numerically trivial canonical divisor.
    
    We collect below some basic properties of irreducible symplectic varieties, which were established in \cite{GKP16b,GGK19}.
    
    \begin{prop}
        \label{prop:basic_prop_ISV}
        Let $X$ be an irreducible symplectic variety of dimension $2n$. The following statements hold:
        \begin{enumerate}[\normalfont (i)]
            \item $\widetilde{q}(X)=0$.

            \item $\pi_1(X) = \{ 1 \}$.
            
            \item For any quasi-\'etale cover $\gamma \colon X'\to X$ of $X$, the variety $X'$ is also an irreducible symplectic variety.

            \item The tangent sheaf of $X$ is strongly stable in the sense of \cite[Definition 7.2]{GKP16b}.

            \item $\chi(X, \OO_X) = n+1$.
        \end{enumerate}
    \end{prop}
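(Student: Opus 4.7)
The plan is to prove the five assertions in the order (iii), (i), (v), (iv), (ii), using the definition of irreducible symplectic variety together with \autoref{prop:Hodge_duality} for the first three, and invoking \cite{GKP16b,GGK19} for the last two.

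I would first prove (iii) directly from \autoref{dfn:IS_variety}. Given a quasi-\'etale cover $\gamma \colon X' \to X$, the reflexive pullback $\sigma' \coloneqq \gamma^{[*]}\sigma$ is a symplectic form on $X'$, making $(X',\sigma')$ a symplectic variety by \autoref{lem:symplectic_under_maps}(i). To verify the generation property on any quasi-\'etale cover $\gamma' \colon W \to X'$, I observe that the composition $\gamma \circ \gamma' \colon W \to X$ is again quasi-\'etale, so the ISV property of $X$ applied to $\gamma \circ \gamma'$ yields that the reflexive exterior algebra of $W$ is generated by $(\gamma \circ \gamma')^{[*]} \sigma = \gamma'^{[*]} \sigma'$, which is precisely the ISV condition for $X'$.

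Next, I would derive (i) and (v) via Hodge duality. A symplectic variety has canonical (in particular klt) singularities, a property preserved under quasi-\'etale covers by \autoref{rem:singularities_quasi-etale_cover}(i), so \autoref{prop:Hodge_duality} applies to any quasi-\'etale cover $W \to X$. The ISV condition forces $H^0\big(W, \Omega_W^{[p]}\big) = 0$ for every odd $p$, and in particular for $p = 1$, so $q(W) = h^0\big(W, \Omega_W^{[1]}\big) = 0$; taking the supremum over all quasi-\'etale covers yields (i). For (v), the same condition on $X$ itself gives $h^0\big(X, \Omega_X^{[2k]}\big) = 1$ for each $0 \leq k \leq n$, spanned by the non-zero reflexive form $\sigma^{\wedge k}$ (which does not vanish since $\sigma^{\wedge n}$ trivialises $K_X$), while $h^0\big(X, \Omega_X^{[2k+1]}\big) = 0$; applying Hodge duality I then compute
\[
    \chi(X, \OO_X) \;=\; \sum_{p=0}^{2n} (-1)^p h^p(X, \OO_X) \;=\; \sum_{k=0}^{n} 1 \;=\; n+1.
\]

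Statements (iv) and (ii) are the most delicate and I would invoke the existing literature rather than reprove them. For (iv), strong stability of $\mathscr{T}_X$ is established in \cite{GKP16b}, via the isomorphism $\mathscr{T}_X \simeq \Omega_X^{[1]}$ induced by the non-degeneracy of $\sigma$ together with the uniqueness of the symplectic form up to scalar on every quasi-\'etale cover, which is built into \autoref{dfn:IS_variety}. The main obstacle will be (ii): although (i), (iii), and the singular Beauville--Bogomolov decomposition of \cite{HP19,GGK19} readily control the algebraic fundamental group, the passage to triviality of the full topological $\pi_1(X)$ requires the holonomy characterisation of irreducible symplectic varieties developed in \cite{GGK19}, whose relevant statement I would cite directly.
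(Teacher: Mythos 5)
Your proposal is correct and follows essentially the same route as the paper: parts (iii) and (v) are deduced from \autoref{dfn:IS_variety}, \autoref{lem:symplectic_under_maps}(i) and \autoref{prop:Hodge_duality}, while (ii) and (iv) are cited from \cite{GGK19} and \cite{GKP16b}, respectively. The only cosmetic difference is that you unpack (i) directly from the definition and Hodge duality (vanishing of odd reflexive forms on every quasi-\'etale cover), whereas the paper simply cites \cite[Remark 8.17]{GKP16b}; the underlying argument is the same.
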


    \begin{proof}
        Parts (i), (ii) and (iv) of the statement are \cite[Remark 8.17]{GKP16b}, \cite[Corollary 13.3]{GGK19} and \cite[Proposition 8.20]{GKP16b}, respectively. Part (iii) (resp.\ part (v)) follows immediately from \autoref{lem:symplectic_under_maps}(i) (resp.\ \autoref{prop:Hodge_duality}) and the definition of an irreducible symplectic variety.
    \end{proof}

    \subsubsection{Primitive symplectic varieties}
    
    Another subclass of symplectic varieties was defined more recently by Schwald \cite{Schwald20}, albeit first studied by Namikawa and Matsushita under stronger hypotheses. They were originally called \enquote{irreducible} \cite{Schwald20}, but are currently called \enquote{primitive} \cite{BL22}. We also stick to the latter terminology in this paper and, in what follows, we recall their definition and gather some basic properties.
    
    \begin{dfn}
        \label{dfn:PSV}
        A projective symplectic variety $(X,\sigma)$ is called \emph{primitive} if 
        \[ H^1(X,\OO_X) = 0 \ \text{ and } \ H^0 \big( X, \Omega_X^{[2]} \big) = \C \cdot \sigma . \]
    \end{dfn}


    \begin{prop}
        \label{prop:basic_prop_PSV}
        Let $X$ be a primitive symplectic variety. The following statements hold:
        \begin{enumerate}[\normalfont (i)]
            \itemsep 2pt
            
            \item 
            $\Pic(X)$ is finitely generated.

            \item 
            $H^0 \big( X,\Omega_X^{[1]} \big) = \{ 0 \} = H^0 \big( X,\Omega_X^{[\dim X - 1]} \big)$.

            \item 
            $H^0 \big( X,\Omega_X^{[\dim X - 2]} \big) = \C \cdot \sigma^{\dim X -1}$.

            \item 
            $H^0(X,\mathscr{T}_X) = \{ 0 \}$, where $\mathscr{T}_X$ denotes the tangent sheaf of $X$.
            
            \item 
            $\Aut(X)$ is a discrete group.
        \end{enumerate}
    \end{prop}

    \begin{proof}
        Part (i) is an easy consequence of 
        the facts that $q(X) = 0$ and that the symplectic variety $X$ has rational singularities, see \cite[Proposition 1.3]{Beauville00a}. Parts (ii) and (iii) follow from the definition of primitive symplectic varieties and \cite[Proposition 6.9 and Corollary 6.10]{GKP16b}. Part (iv) is \cite[Lemma 4.6]{BL22}, while part (v) follows from part (iv) and \cite[Lemma 3.4]{MO67}.
    \end{proof}
    
    \begin{rem}
        \label{rem:irreducible_vs_primitive_symplectic}
        We compare here Definitions \ref{dfn:IS_variety} and \ref{dfn:PSV}.
        \begin{enumerate}[(i)]
            \item In the smooth case, being irreducible symplectic is equivalent to being primitive symplectic, since then both notions recover the notion of an IHS manifold by \cite[Theorem 1]{Schwald22}; see also \cite[Remark 8.19]{GKP16b} and \cite[Remark 2]{Schwald22} for a proof of this assertion for smooth irreducible symplectic varieties without invoking \cite[Theorem 1]{Schwald22}.
        
            \item In the singular setting, irreducible symplectic varieties are primitive symplectic by \autoref{prop:Hodge_duality}, but the converse does not hold: Examples \ref{example:symmetric_product_K3s} and \ref{example:singular_Kummer_surface} below and \cite[Example 26]{Schwald20} constitute examples of singular symplectic varieties which are primitive, but not irreducible.

            \item In contrast with the case of irreducible symplectic varieties, a quasi-\'etale cover of a primitive symplectic variety is not necessarily a primitive symplectic variety; see Examples \ref{example:symmetric_product_K3s} and \ref{example:singular_Kummer_surface}.
        \end{enumerate}
    \end{rem} 
    
    To the best of our knowledge, all known examples of primitive symplectic varieties are simply connected. For instance, this holds for all irreducible symplectic varieties by \autoref{prop:basic_prop_ISV}(ii), as well as for the symplectic torus quotients from \cite[Lemma 5]{Schwald22}, see \cite[Lemma 1.2]{Fujiki83b} and \cite[Section 6]{Schwald22}. We would like to thank Martin Schwald for the relevant discussions about his result.

    \begin{exa}
        \label{example:symmetric_product_K3s}
        Fix a projective K3 surface $S$ and an integer $n \geq 2$. Set $S^n \coloneqq S \times \cdots \times S$ and denote by $\mathfrak{S}_n$ the symmetric group of degree $n$. By \cite[Section 6]{Beauville83a}, the punctual Hilbert scheme $S^{[n]} \coloneqq \Hilb^n(S)$, which is a projective IHS manifold of dimension $2n$, yields a crepant
        resolution of singularities $S^{[n]} \to S^{(n)}$ of the symmetric product $S^{(n)} \coloneqq \Sym^n(S) \coloneqq S^n / \mathfrak{S}_n$, which is thus a primitive symplectic variety.
        However, $S^{(n)}$ is \emph{not} an irreducible symplectic variety, since it admits a quasi-\'etale cover $S^n \to S^{(n)}$ such that $h^0 \big( S^n,\Omega^2_{S^n} \big) = n \neq 1$.
        
        Since $S^{[n]}$ is simply connected, by Remarks \ref{rem:Galois_and_quasi-etale}(i) and \ref{rem:augmented_irreg}(iii) we obtain 
        \[ 
            \widetilde{q} \big( S^{(n)} \big) = \widetilde{q} \big( S^n \big) = q \big( S^n \big) = 0 .
        \]
        Moreover, $S^{(n)}$ is simply connected by \cite[Section 6, Lemma 1]{Beauville83a}. On the other hand, the regular locus of $S^{(n)}$ is not simply connected: we have 
        the short exact sequence
        \[ 
            1 \to \pi_1(S^n) \to \pi_1 \big( {S^{(n)}}_\text{reg} \big) \to \mathfrak{S}_n \to 1 , 
        \]
        whence $\pi_1 \big( {S^{(n)}}_\text{reg} \big) \simeq \mathfrak{S}_n$.
        We refer to \cite[p.\ 156 (symmetric products of K3 surfaces), p.\ 164 (paragraph after the proof of Proposition 3), and Proposition 5]{Perego20} for more information about this example.
    \end{exa}
    
    \begin{exa}
        \label{example:singular_Kummer_surface}
        Consider an abelian surface $A$ and the singular Kummer surface $S \coloneqq A / \{ \pm \Id_A \}$, which has canonical singularities. 
        Recall that the minimal resolution $S' \to S$ of $S$ is a K3 surface $S'$. Therefore, $S$ is a primitive symplectic variety, but we have
        \[ 
            \widetilde{q}(S) = \widetilde{q}(A) = 2 ,
        \]
        see \cite[Lemma 2.19]{GGK19},
        and thus $S$ is \emph{not} an irreducible symplectic variety due to \autoref{prop:basic_prop_ISV}(i).
        
        Furthermore, since the K3 surface $S'$ is simply connected, we have $\widetilde{q}(S') = q(S') = 0$ by \autoref{rem:Galois_and_quasi-etale}(i), and $S$ is also simply connected by \cite[Theorem 1.1]{Tak03}. On the other hand, the regular locus of $S$ is not simply connected: the fundamental group of $S_\text{reg}$ is infinite, since there is a short exact sequence
        \[ 
            1 \to \Z^4 \to \pi_1 \big( S_\text{reg} \big) \to \Z / 2\Z \to 1 . 
        \]
    \end{exa}
    
    Next, we prove a criterion for a projective symplectic variety to be \emph{primitive}, which is similar to \cite[Proposition 3.15]{BCGPSV26}. This result plays a key role in the paper and slightly generalizes the recent \cite[Proposition 3.14]{BCGPSV26}.
    
    \begin{prop}
        \label{prop:dom_rational_map_from_PSV}
        If there exists a dominant rational map $P \dashrightarrow X$ from a primitive symplectic variety $P$ to a projective symplectic variety $X$, then $X$ is a primitive symplectic variety as well.
    \end{prop}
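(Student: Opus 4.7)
The plan is to reformulate primitivity via Hodge duality and then to control reflexive forms on $X$ by comparing them, through a common resolution, to reflexive forms on $P$. Since every symplectic variety has canonical (hence klt) singularities, \autoref{prop:Hodge_duality} shows that a projective symplectic variety $V$ is primitive if and only if $H^0\big(V, \Omega_V^{[1]}\big) = 0$ and $H^0\big(V, \Omega_V^{[2]}\big)$ is one-dimensional. Thus the hypothesis that $P$ is primitive translates exactly to $h^0\big(P, \Omega_P^{[1]}\big) = 0$ and $h^0\big(P, \Omega_P^{[2]}\big) = 1$, and I only need to establish the analogous bounds on $X$.

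Pre-composing $f \colon P \dashrightarrow X$ with a resolution $\tilde{P} \to P$ and resolving the indeterminacy of the resulting map, I obtain a smooth projective variety $W$ equipped with a birational morphism $g \colon W \to P$ and a surjective morphism $h \colon W \to X$; surjectivity follows from the dominance of $f$ together with properness. The strategy is then to produce, for each $p$, an isomorphism $g^{*} \colon H^0\big(P, \Omega_P^{[p]}\big) \xrightarrow{\sim} H^0\big(W, \Omega_W^{p}\big)$ and an injection $h^{*} \colon H^0\big(X, \Omega_X^{[p]}\big) \hookrightarrow H^0\big(W, \Omega_W^{p}\big)$, so that together they realise $H^0\big(X, \Omega_X^{[p]}\big)$ as a subspace of $H^0\big(P, \Omega_P^{[p]}\big)$.

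The isomorphism $g^{*}$ is standard: since $P$ is klt and $g$ is birational from a smooth variety, the extension theorem for reflexive differentials on klt spaces identifies reflexive forms on $P$ with honest forms on $W$ via pullback. The same extension result applied to the klt variety $X$ provides the reflexive pullback $h^{*}$: any $\omega \in H^0\big(X, \Omega_X^{[p]}\big)$ restricts to a regular form on $X_{\text{reg}}$, pulls back canonically to $h^{-1}(X_{\text{reg}}) \subseteq W$, and extends uniquely to a regular form on $W$ since $W$ is smooth and $X$ has klt singularities. Injectivity of $h^{*}$ is exactly where the dominance of $f$ is used: by generic smoothness in characteristic zero, there is a dense open subset of $W$ on which $h$ is smooth and $\mathrm{d}h$ is surjective; if $h^{*}\omega = 0$, then $\omega$ vanishes at the image of every such point and hence on a dense subset of $X_{\text{reg}}$, so $\omega = 0$.

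Applying this to $p = 1$ gives $h^0\big(X, \Omega_X^{[1]}\big) \leq h^0\big(P, \Omega_P^{[1]}\big) = 0$, whence $H^1(X, \OO_X) = 0$ by a second application of \autoref{prop:Hodge_duality}. Applying it to $p = 2$ gives $h^0\big(X, \Omega_X^{[2]}\big) \leq 1$, and the reverse inequality is immediate from the assumption that $X$ carries a symplectic form $\sigma$, so $H^0\big(X, \Omega_X^{[2]}\big) = \C \cdot \sigma$. Both conditions together show that $X$ is primitive symplectic. The only delicate point I expect is the simultaneous use of the reflexive pullback along $h$ together with its injectivity deduced from dominance; both are by now standard in the theory of differential forms on klt spaces, and once they are in place the conclusion is a formal dimension count.
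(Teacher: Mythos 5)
Your proof is correct and follows essentially the same route as the paper's: both pass to a smooth common model dominating $P$ birationally and $X$ by a surjective morphism, use Hodge duality for klt spaces to translate primitivity into statements about reflexive $1$- and $2$-forms, and conclude via the (injective) pullback of reflexive differentials along dominant morphisms to klt targets. The only cosmetic difference is that the paper transfers $h^1(\OO)$ and $h^2(\OO)$ from $P$ to the common model by birational invariance for rational singularities before applying Hodge duality, whereas you identify the spaces of forms directly via the extension theorem.
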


    \begin{proof}
        The proof is similar to the one of \cite[Proposition 3.14]{BCGPSV26}, but we provide all the details for the reader's convenience. We also use tacitly but repeatedly the fact that symplectic varieties have rational singularities, see \cite[Proposition 1.3]{Beauville00a}.
        
        Consider first a resolution of singularities $\rho \colon Y \rightarrow X$ of $X$ and then a resolution of indeterminacies $(r,s) \colon Q \to P \times Y$ of the induced dominant rational map $P \dashrightarrow Y$ such that $Q$ is smooth. We obtain thus the following commutative diagram:
        \begin{center}
            \begin{tikzcd}[row sep = normal]
                Q \arrow[d, "r" swap] \arrow[r, "s"] & Y \arrow[d, "\rho"] \\
                P \arrow[r, dashed] & X .
            \end{tikzcd} 
        \end{center}
        Since $Q$ is by construction birational to the primitive symplectic variety $P$, 
        we obtain 
        \begin{equation}
            \label{eq:1_dom_rational_map_from_PSV}
            q(Q) = h^1(Q,\OO_Q) = h^1(P, \OO_P) = 0
        \end{equation}
        by \autoref{rem:augmented_irreg}(i) and 
        \begin{equation}
            \label{eq:2_dom_rational_map_from_PSV}
            h^0 \big( Q,\Omega^{2}_Q \big) = h^2(Q, \OO_Q) = h^2(P, \OO_P) = h^0 \big( P,\Omega^{[2]}_P \big) = 1
        \end{equation}
        by \autoref{prop:Hodge_duality}. Since the morphism $s \colon Q \to Y$ is surjective, by \cite[Lemma 7.28]{Voisin07book_I} and the preceding paragraph in op.\ cit.\ we obtain $q(Y) = h^1(Y,\OO_Y) = 0$ due to \eqref{eq:1_dom_rational_map_from_PSV}, and hence 
        \[ q(X) = h^1(X,\OO_X) = h^1(Y,\OO_Y) = 0 \]
        by \autoref{rem:augmented_irreg}(i).
        Finally, since the composite map $\rho \circ s \colon Q \to X$ is dominant and since $X$ admits a symplectic form by assumption, it follows from \cite[Proposition 5.8]{Kebekus13a} and \eqref{eq:2_dom_rational_map_from_PSV} that 
        \[ 1 \leq h^0 \big( X,\Omega^{[2]}_X \big) \leq  h^0 \big( Q,\Omega^{2}_Q \big) = 1 , \] 
        whence $h^0 \big( X,\Omega^{[2]}_X \big) = 1$. This finishes the proof.
    \end{proof}

    We conclude this subsection by mentioning two immediate consequences of the previous result. Firstly, \autoref{prop:dom_rational_map_from_PSV} together with \autoref{lem:symplectic_under_maps}(ii) imply that the class of primitive symplectic varieties is preserved under birational morphisms; namely, if $X \to X'$ is a birational morphism from a primitive symplectic variety $X$ to a normal projective variety $X'$, then $X'$ is also a primitive symplectic variety. Secondly, \autoref{prop:dom_rational_map_from_PSV} shows that if a primitive symplectic variety $X$ admits a symplectic resolution $W \to X$, then it is simply connected by \cite[Theorem 1.1]{Tak03}, since $W$ is an IHS manifold in this case.

    \subsection{Finite group actions on primitive symplectic varieties}
    \label{subsection:finite_group_actions}

    We first recall the following standard definitions.
    
    \begin{dfn}
        Let $(X,\sigma)$ be a primitive symplectic variety.
        \begin{enumerate}[(i)]
            \item An automorphism $g \in \Aut(X)$ is called \emph{symplectic} if $g^{[*]} \sigma = \sigma$; otherwise, it is called \emph{non-symplectic}. 

            \item A non-symplectic automorphism $g \in \Aut(X)$ is called an \emph{anti-symplectic involution} if $g^2 = \Id_X$ and $g^{[*]} \sigma = - \sigma$.
              
            \item A non-symplectic automorphism $g \in \Aut(X)$ is called \emph{purely non-symplectic} if there exists a primitive root of unity $\zeta \neq 1$ of the same order as $g$ such that $g^{[*]} \sigma = \zeta \sigma$.

            \item We say that a finite subgroup $G$ of $\Aut(X)$ acts \emph{symplectically} (resp.\ \emph{non-symplec\-tically}) on $X$ if every element $g \in G \setminus \{\Id\}$ is a symplectic (resp.\ non-symplectic) automorphism of $X$.
        \end{enumerate}
    \end{dfn}

    In the remainder of this subsection we study non-symplectic finite group actions on primitive symplectic varieties. We begin with some general observations.
    Let $(X,\sigma)$ be a primitive symplectic variety and let $G$ be a finite subgroup of $\Aut(X)$. Consider the natural map
    \[
        \rho\colon \Aut(X)\to \Aut \left( H^0 \big( X,\Omega_{X}^{[2]} \big) \right), \ g \mapsto g^{[*]}.
    \]
    Due to the facts that $H^0 \big( X,\Omega_{X}^{[2]} \big) \simeq \C$ and $\GL(1,\C)\simeq \C^*$, for any $g\in\Aut(X)$ there exists a root of unity $\zeta$ such that $g^{[*]} \sigma = \zeta \sigma$. We obtain thus a group homomorphism
    $\Aut(X) \to \C^* $, which induces a surjective group homomorphism
    \[ \rho_G \colon G \twoheadrightarrow \mu_m , \]
    where $m \coloneqq |\Ima(\rho_G)|$ and $\mu_m$ is a cyclic group 
    generated by a primitive $m$-th root of unity. By construction, $G_\text{sym} \coloneqq \ker(\rho_G)$ consists of all symplectic automorphisms of $X$ that are contained in $G$, and clearly there is a short exact sequence
    \[ 0 \longrightarrow G_\text{sym} \longrightarrow G \longrightarrow \mu_m \longrightarrow 0 . \]
    
    The previous observations lead to the next useful lemma.
    
    \begin{lem}
        \label{lem:group_acting_nonsymplectically_cyclic}
        Let $X$ be a primitive symplectic variety and let $G$ be a finite subgroup of $\Aut(X)$. Then $G$ acts non-symplectically on $X$ if and only if $G$ is a cyclic group generated by a purely non-symplectic automorphism of $X$.
    \end{lem}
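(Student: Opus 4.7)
The plan is to use the group homomorphism $\rho_G \colon G \twoheadrightarrow \mu_m$ constructed immediately before the lemma, since it essentially encodes everything we need. The key observation is that by definition $G_{\text{sym}} = \ker(\rho_G)$ consists exactly of the symplectic elements of $G$, so the condition that $G$ acts non-symplectically is equivalent to $G_{\text{sym}} = \{\Id_X\}$, i.e., to the injectivity of $\rho_G$.

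For the forward direction, assume $G$ acts non-symplectically. Then $\rho_G$ is injective, and since it is also surjective onto $\mu_m$ by construction, it is an isomorphism. Hence $G$ is cyclic of order $m$. Picking a generator $g \in G$, its image $\rho_G(g)$ must be a generator of $\mu_m$, i.e., a primitive $m$-th root of unity $\zeta$; then $g^{[*]}\sigma = \zeta \sigma$ by definition of $\rho_G$, and the order of $\zeta$ coincides with the order $m$ of $g$, so $g$ is purely non-symplectic.

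For the converse, assume $G = \langle g \rangle$ where $g$ has order $n$ and $g^{[*]} \sigma = \zeta \sigma$ for some primitive $n$-th root of unity $\zeta \neq 1$. Since $\rho$ is a group homomorphism, for every $1 \leq k \leq n-1$ we have $(g^k)^{[*]} \sigma = \zeta^k \sigma$, and primitivity of $\zeta$ forces $\zeta^k \neq 1$, so $g^k$ is non-symplectic. Thus every non-identity element of $G$ is non-symplectic.

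I do not expect any genuine obstacle here: the statement is essentially a direct translation of the defining property of $\rho_G$ together with the elementary fact that a finite subgroup of $\C^*$ is cyclic. The only small care needed is to match the order of $g$ with the order of the root of unity $\zeta$, which follows because $\rho_G$ is an isomorphism in the forward direction and from primitivity of $\zeta$ in the backward direction.
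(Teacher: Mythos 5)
Your proposal is correct and follows essentially the same route as the paper: the forward direction is exactly the paper's argument (injectivity of $\rho_G$ on a non-symplectically acting $G$ forces $\rho_G$ to be an isomorphism onto the cyclic group $\mu_m$, and a generator is then purely non-symplectic), while your converse spells out the step the paper dismisses as clear. No gaps.
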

    
    \begin{proof}
        One direction is clear, so we prove the converse direction. Specifically, assume that $G$ acts non-symplectically on $X$, consider the induced surjective group homomorphism 
        $ \rho_G \colon G \twoheadrightarrow \mu_m $ as above, and note that $\ker (\rho_G) = \{ \Id_X \}$ by assumption. Hence, $\rho_G$ is an isomorphism, and the assertion follows.
    \end{proof}

    In the special case that $g \in \Aut(X)$ is an anti-symplectic involution, its fixed locus $\Fix(g)$ is a finite union of Lagrangian subvarieties of $X$.  Indeed, the proof given in \cite[Lemma 1]{Beauville11} works on $X_{\text{reg}} \cap \Fix(g)$. 
    We now generalize Beauville's argument in order to investigate the non-free locus of a non-symplectic group action. 
    
    \begin{lem}
        \label{lem:prime_order_non-symplectic}
        Let $X$ be a primitive symplectic variety of dimension $2n \geq 4$. If $g \in \Aut(X)$ is a non-symplectic automorphism of prime order, then $\codim_X \big( \Fix(g) \big) \geq 2$.
    \end{lem}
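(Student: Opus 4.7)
My plan is to adapt the classical argument that the fixed locus of an anti-symplectic involution on an IHS manifold is Lagrangian (cf.\ \cite[Lemma 1]{Beauville11}) to the more general setting of a purely non-symplectic automorphism of prime order. First, I would apply \autoref{lem:group_acting_nonsymplectically_cyclic} to $G \coloneqq \langle g \rangle$: since $g$ is non-symplectic of prime order $p$, the group $G$ acts non-symplectically on $X$, and the lemma produces a primitive $p$-th root of unity $\zeta \neq 1$ such that $g^{[*]}\sigma = \zeta \sigma$.

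Next, I would argue by contradiction and assume that $\Fix(g)$ contains an irreducible component $F$ of codimension one in $X$. Since $X$ is normal, $\codim_X(X \setminus X_{\text{reg}}) \geq 2$, so $F \cap X_{\text{reg}}$ is a dense open subset of $F$, and I may pick a point $x \in F \cap X_{\text{reg}}$ that is a smooth point of $F$. Because $g$ has finite order and fixes $x \in X_{\text{reg}}$, a standard linearization argument (averaging local coordinates over $G$) identifies the tangent space $T_xF$ with the $+1$-eigenspace of the induced linear action $dg_x$ on $T_xX$; in particular $\dim_\C T_xF = 2n-1$.

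The final step would then be to exploit the symplectic form. For any $v, v' \in T_xF$, combining $g^{[*]}\sigma = \zeta \sigma$ with $dg_x(v) = v$ and $dg_x(v') = v'$ gives
\[
\zeta \, \sigma_x(v, v') \;=\; (g^{[*]}\sigma)_x(v,v') \;=\; \sigma_x(dg_x v,\, dg_x v') \;=\; \sigma_x(v,v'),
\]
so $(\zeta - 1)\sigma_x(v,v') = 0$, and since $\zeta \neq 1$ this forces $\sigma_x(v,v') = 0$. Thus $T_xF$ is isotropic in the $2n$-dimensional symplectic vector space $(T_xX, \sigma_x)$. But any isotropic subspace has dimension at most $n$, and the hypothesis $n \geq 2$ yields $\dim_\C T_xF = 2n - 1 > n$, a contradiction. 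The main technical subtlety lies in the middle step: one must pass to a smooth point of $F$ that also lies in $X_{\text{reg}}$ before invoking the local linearization to identify $T_xF$ with the $+1$-eigenspace; once that is done, the eigenvalue computation above is essentially formal and the dimension count closes the argument.
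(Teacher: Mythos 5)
Your proof is correct and follows essentially the same route as the paper: both arguments reduce to a regular point $x$ of a component of $\Fix(g)$ lying in $X_{\mathrm{reg}}$, use $g^{[*]}\sigma=\zeta\sigma$ with $\zeta\neq 1$ to show the tangent space there is isotropic for $\sigma_x$, and conclude by the bound $\dim \leq n$ on isotropic subspaces. The only cosmetic differences are that the paper bounds the dimension of every component directly rather than arguing by contradiction on a codimension-one component, and that it only uses the (trivial) inclusion $T_xF\subseteq\ker(dg_x-\Id)$, so your appeal to local linearization is not actually needed.
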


    \begin{proof}
        We may assume that $\Fix(g) \neq \emptyset$, since otherwise we are done, and we may also restrict to the regular locus of $X$, since $\codim_X (X_\text{sing}) \geq 2$. In other words, we need to show that $\codim_{X_\text{reg}} \big( \Fix(g) \cap X_{\text{reg}} \big) \geq 2$. 
        To this end, pick an irreducible component $F$ of $\Fix(g) \cap X_{\text{reg}}$. If it is zero-dimensional, then we are done, so we may assume that it is positive-dimensional. Consider the natural inclusion $j \colon F \hookrightarrow X_\text{reg}$ and fix a point $x \in F_\text{reg}$.
        Note that $g$ acts on the tangent space $T_x X$ of $X$ at the point $x$ via its differential $dg_x \in \Aut(T_x X)$.  
        Since $(g |_{X_\text{reg}}) \circ j = \Id_{F}$, we infer that $(dg_x) |_{T_x F} = \Id_{T_x F}$. It follows that $1$ is an eigenvalue of $dg_x$ with corresponding eigenspace $W$,
        and we have 
        \[ 
            T_x F \subseteq W \subseteq T_x X \simeq T_x (X_\text{reg}) , 
        \] 
        so the local dimension of $F$ at $x$ is bounded above by $\dim_\C W$. We will next estimate this dimension.
        Denote by $\sigma$ the symplectic form on $X_{\text{reg}}$ and by $\sigma_x$ the induced symplectic form on the tangent space $T_x X$. By assumption we have $g^{[*]} \sigma = \zeta_p \, \sigma$, where $p \coloneqq \ord(g)$ and $\zeta_p$ is a primitive $p$-th root of unity. In particular, for any $u,v \in W$ we obtain
        \[ \big( \zeta_p \, \sigma_x \big) (u,v) = \big( g^* \sigma_x \big) (u,v) = \sigma_x \big( dg_x(u), \, dg_x(v) \big) = \sigma_x(u,v) , \]
        which implies that $\sigma_x(u,v)=0$. Therefore, $W$ is isotropic; in particular, we have
        $ \dim_\C W \leq \frac{\dim X}{2} $. Since $\dim X = 2n \geq 4$, we infer that $F$ 
        has codimension at least two in $\Fix(g) \cap X_{\text{reg}}$. In conclusion,
        $ \codim_{X_\text{reg}} \big( \Fix(g) \cap X_{\text{reg}} \big) \geq 2 $, as desired.
    \end{proof}
   
    \begin{prop}
        \label{prop:group_acting_nonsymplectically_free_codim_one}
        Let $X$ be a primitive symplectic variety of dimension $2n \geq 4$. If $g \in \Aut(X)$ is a purely non-symplectic automorphism, then $G \coloneqq \langle g \rangle$ acts on $X$ freely in codimension one.
    \end{prop}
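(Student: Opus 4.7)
The plan is to reduce to the prime-order case already handled by \autoref{lem:prime_order_non-symplectic}. Set $m \coloneqq \ord(g)$ and write $g^{[*]} \sigma = \zeta \sigma$ for a primitive $m$-th root of unity $\zeta$, as in the definition of a purely non-symplectic automorphism. Since $G = \langle g \rangle$ is cyclic of order $m$, the non-free locus decomposes as
\[
    X^{\mathrm{nf}} \;=\; \bigcup_{k=1}^{m-1} \Fix(g^k),
\]
which is a finite union of closed subsets of $X$. It therefore suffices to prove that $\codim_X \Fix(g^k) \geq 2$ for every $k \in \{1, \ldots, m-1\}$.

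Fix such a $k$ and set $d \coloneqq \ord(g^k) = m/\gcd(m,k) > 1$. I would choose any prime $p$ dividing $d$ and consider the automorphism $h \coloneqq (g^k)^{d/p}$, which has order $p$ and satisfies $\Fix(g^k) \subseteq \Fix(h)$. The key technical point, which I plan to verify by a short elementary calculation, is that $h$ is still non-symplectic: writing $k = \gcd(m,k) \cdot k'$ with $\gcd(k',d) = 1$, one has $h = g^{k' m/p}$ and hence $h^{[*]} \sigma = \zeta^{k' m/p} \sigma$, which is a nontrivial $p$-th root of unity times $\sigma$, because $p \mid d$ together with $\gcd(k',d) = 1$ forces $p \nmid k'$.

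With this in hand, \autoref{lem:prime_order_non-symplectic} applies to the non-symplectic automorphism $h$ of prime order $p$ (here the assumption $\dim X = 2n \geq 4$ is used), giving $\codim_X \Fix(h) \geq 2$, whence $\codim_X \Fix(g^k) \geq 2$. Taking the union over $k$ then yields $\codim_X X^{\mathrm{nf}} \geq 2$, which is exactly the statement that $G$ acts freely in codimension one. The argument is essentially a bookkeeping exercise on top of the already-established prime-order case, so I do not foresee a serious obstacle; the only subtlety is the number-theoretic verification that an appropriate prime-order power of each nontrivial $g^k$ remains non-symplectic, and this uses in an essential way the purely non-symplectic hypothesis, rather than merely non-symplectic.
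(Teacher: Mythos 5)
Your proof is correct and follows essentially the same route as the paper: both cover the non-free locus by finitely many fixed loci of prime-order non-symplectic elements of $G$ and then invoke \autoref{lem:prime_order_non-symplectic}. The only cosmetic difference is that the paper argues pointwise via the stabilizers $G_x$ rather than via the decomposition $X^{\mathrm{nf}} = \bigcup_{k=1}^{m-1}\Fix(g^k)$, and your explicit check that the chosen prime-order power remains non-symplectic (which the paper leaves implicit, it being immediate since every nontrivial power of a purely non-symplectic automorphism is non-symplectic) is a welcome addition.
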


    \begin{proof}
        We have to show that $\codim_X (X^\text{nf}) \geq 2$. We may assume that $X^\text{nf} \neq \emptyset$, since otherwise we are done. We now claim that for any $x \in X^\text{nf}$ there exists a non-symplectic automorphism $h \in G$ of prime order such that $x \in \Fix(h)$. Indeed, given $x \in X^\text{nf}$, we have $m \coloneqq |G_x| >1$. Since $G = \langle g \rangle$ is a cyclic group, its subgroup $G_x$ is also a cyclic group, generated by $g^{\ord(g)/m}$. By picking any prime divisor $p$ of $m$, we infer that 
        \[ 
            h \coloneqq g^{\ord(g) / p} = \big(g^{\ord(g) / m}\big)^{m/p} \in \Aut(X) 
        \]
        is an automorphism of prime order $p$ which fixes the point $x$; this proves the claim.
        It follows now that
        \[ 
            X^\text{nf} = \bigcup_h \Fix(h) , 
        \]
        where the union is taken over all prime order non-symplectic automorphisms $h \in G$. As this is clearly a finite union, \autoref{lem:prime_order_non-symplectic} yields $\codim_X (X^\text{nf}) \geq 2$.
    \end{proof}

    \subsection{MMP for symplectic pairs}
    \label{subsection:MMP_symplectic}
    
    In this short subsection we first demonstrate that being primitive symplectic is preserved under both the usual and the equivariant MMP, since this is essential for our purposes. At the end we show that any equivariant MMP starting from an IHS pair terminates, see \autoref{prop:IHS_G-equivariant_termination}. This result is crucial for the proof of \autoref{thm:termination_MMP_Enriques}.
    
    \begin{lem}
        \label{lem:MMP_step_symplectic}
        Let $(X,B)$ be a projective log canonical pair. Assume that $K_X + B$ is not nef and consider a step of a $(K_X+B)$-MMP:
        \begin{center}
            \begin{tikzcd}
                (X, B) \arrow[dr, "\eta" swap] \arrow[rr, "\varphi", dashed] && (X^+, B^+) \arrow[dl, "\eta^+"] \\
                & Z
            \end{tikzcd}
        \end{center}
        where $B^+ = \varphi_* B$. The following statements hold:
        \begin{enumerate}[\normalfont (i)]
            \item If $X$ is a symplectic variety, then $Z$ and $X^+$ are also symplectic varieties.
            
            \item If $X$ is a primitive symplectic variety, then $Z$ and $X^+$ are also primitive symplectic varieties.

            \item If $X$ is a projective IHS manifold and if the given $(K_X+B)$-MMP step is a $B$-flop, then $X^+$ is also a projective IHS manifold.
        \end{enumerate}
    \end{lem}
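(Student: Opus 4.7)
The plan is to treat the divisorial and flipping cases separately and reduce both to \autoref{lem:symplectic_under_maps}. In the divisorial case, $X^+ = Z$ and $\varphi = \eta \colon X \to Z$ is a projective birational morphism from the symplectic variety $X$ to the normal variety $Z$, so part (ii) of that lemma immediately yields that $Z = X^+$ is symplectic. In the flipping case, $\eta \colon X \to Z$ is again a projective birational morphism onto a normal variety, and the same part (ii) gives that $Z$ is symplectic. For $X^+$, I note that $\varphi \colon X \dashrightarrow X^+$ is small (an isomorphism in codimension one), hence in particular a birational contraction, while by the discussion in Subsection \ref{subsection:MMP_step}, $X^+$ is normal and inherits canonical (hence rational) singularities from $X$. \autoref{lem:symplectic_under_maps}(iii) then yields that $X^+$ is symplectic.

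\textbf{Part (ii).} By part (i), both $Z$ and $X^+$ are projective symplectic varieties. The morphism $\eta \colon X \to Z$ is surjective, and the birational map $\varphi \colon X \dashrightarrow X^+$ is dominant. Since $X$ is primitive symplectic by assumption, \autoref{prop:dom_rational_map_from_PSV} applied to each of these maps in turn gives that $Z$ and $X^+$ are primitive symplectic as well.

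\textbf{Part (iii).} This is where the main obstacle lies. From (ii) we already know that $X^+$ is a primitive symplectic variety, and from Subsection \ref{subsection:MMP_step} we also know that $X^+$ is $\Q$-factorial (since $X$ is, being smooth). The map $\varphi$ is a small birational map between $\Q$-factorial projective varieties, and the MMP step is by hypothesis a $B$-flop of the smooth IHS manifold $X$. To conclude, I plan to invoke the classical fact, implicit in the proof of \autoref{thm:termination_MMP_symplectic} and going back to Huybrechts, that any small $\Q$-factorial birational modification of a projective IHS manifold is again a projective IHS manifold; the key geometric input is that the reflexive symplectic form on $X^+$ pulled back to a common smooth resolution agrees with the one coming from $X$, and this forces the exceptional locus of $\varphi$ to be empty on $X^+$ in codimension two. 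Once smoothness of $X^+$ is established, \autoref{rem:irreducible_vs_primitive_symplectic}(i) upgrades $X^+$ from smooth primitive symplectic to smooth irreducible symplectic, and \autoref{prop:basic_prop_ISV}(ii) then confirms simple connectedness, so $X^+$ is a projective IHS manifold, as required.
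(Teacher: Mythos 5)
Parts (i) and (ii) of your proposal coincide with the paper's argument: (i) is exactly \autoref{lem:symplectic_under_maps}(ii)(iii) applied to $\eta$ and to the birational contraction $\varphi$ (using that $X^+$ has rational singularities by the discussion in Subsection \ref{subsection:MMP_step}), and (ii) is \autoref{prop:dom_rational_map_from_PSV} applied to the dominant maps $\eta$ and $\varphi$. No issues there.

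For part (iii) your overall structure is right, but the justification you sketch for the key smoothness claim is not a proof and, as stated, would not go through. The fact you need is \cite[Corollary 1]{Nam06}: two birational projective symplectic varieties with $\Q$-factorial \emph{terminal} singularities are either both smooth or both singular. This is a genuinely nontrivial theorem proved via the deformation theory of $\Q$-factorial terminal symplectic varieties (comparing locally trivial deformations with all deformations); it does not follow from the observation that the reflexive symplectic forms on $X$ and $X^+$ agree on a common resolution — that agreement holds for any crepant small modification and by itself says nothing about smoothness of $X^+$ (and "the exceptional locus of $\varphi$ is empty on $X^+$ in codimension two" is automatic for a small map, hence carries no content). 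The attribution to Huybrechts is also off: Huybrechts' results concern birational maps between two IHS \emph{manifolds}, whereas here the issue is precisely to show that the a priori singular model $X^+$ is smooth. To apply Namikawa's corollary you must also check that $X^+$ is terminal, which follows because $K_X \equiv 0$ makes the flop crepant (\autoref{rem:contraction_from_K-trivial}) so that \cite[Lemma 3.38]{KM98} transfers terminality from the smooth $X$; $\Q$-factoriality of $X^+$ is \cite[Proposition 4.8.20]{Fuj17book}. Once smoothness is in hand, your final upgrade via \autoref{rem:irreducible_vs_primitive_symplectic}(i) and \autoref{prop:basic_prop_ISV}(ii) is exactly what the paper does.
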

    
    \begin{proof}
        Observe that all varieties involved are projective by construction. Part (i) follows from \autoref{lem:symplectic_under_maps}(ii)(iii), taking also the second paragraph of \autoref{subsection:MMP_step} into account. Part (ii) is an immediate consequence of (i) and \autoref{prop:dom_rational_map_from_PSV}. Part (iii) follows from (ii) and \cite[Corollary 1]{Nam06}, taking into account \cite[Lemma 3.38]{KM98}, \cite[Proposition 4.8.20]{Fuj17book} and \autoref{rem:contraction_from_K-trivial}.
    \end{proof}
    
    \begin{lem}
        \label{lem:G-equivariant_MMP_step_symplectic}
        Let $(X,B)$ be a projective log canonical $G$-pair, where $G$ is a finite subgroup of $\Aut(X)$. Assume that $K_X+B$ is not nef and consider a step of a $G$-equiva\-riant $(K_X+B)$-MMP:
        \begin{center}
            \begin{tikzcd}
                (X, B) \arrow[dr, "\eta" swap] \arrow[rr, "\varphi", dashed] && (X^+, B^+) \arrow[dl, "\eta^+"] \\
                & Z
            \end{tikzcd}
        \end{center}
        where $B^+ = \varphi_* B$. The following statements hold:
        \begin{enumerate}[\normalfont (i)]
            \item If $X$ is a symplectic variety, then $Z$ and $X^+$ are also symplectic varieties.
            
            \item If $X$ is a primitive symplectic variety, then $Z$ and $X^+$ are also primitive symplectic varieties.

            \item Assume now that $X$ is a primitive symplectic variety. If $G$ acts non-symplectically (resp.\ freely in codimension one) on $X$, then it also acts non-symplectically (resp.\ freely in codimension one) on both $Z$ and $X^+$.
        \end{enumerate}
    \end{lem}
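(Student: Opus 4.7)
The plan is to deduce (i) and (ii) from the non-equivariant counterpart \autoref{lem:MMP_step_symplectic} with essentially no change, and then to handle (iii) by a one-line citation for the codimension-one statement and by a character-chasing argument on reflexive two-forms for the non-symplectic statement.

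For part (i), I would invoke \autoref{lem:symplectic_under_maps}(ii) on the projective birational morphism $\eta \colon X \to Z$ to conclude that $Z$ is symplectic; in the flipping case I would then use that $\varphi \colon X \dashrightarrow X^+$ is a birational contraction and that, by \autoref{rem:G-equivariant_K-trivial_canonical_sing}(iii), $X^+$ has canonical (in particular rational) singularities, in order to apply \autoref{lem:symplectic_under_maps}(iii); in the divisorial case $X^+ = Z$ and there is nothing further to do. For part (ii) I would combine (i) with \autoref{prop:dom_rational_map_from_PSV}, applied to the dominant rational maps $X \dashrightarrow Z$ and $X \dashrightarrow X^+$ from the primitive symplectic variety $X$.

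The \enquote{freely in codimension one} assertion in (iii) has already been recorded in the discussion of Case A and Case B in Subsection \ref{subsection:MMP_step}: $\varphi$ is an isomorphism in codimension one (and in the divisorial case coincides with $\eta$), while the indeterminacy locus of $\eta^{-1} \colon Z \dashrightarrow X$ has codimension at least two on $Z$; I would simply refer to those observations.

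The substantive new content is the non-symplectic assertion in (iii), which I would prove by exploiting that, thanks to (ii), each of $H^0 \big(X, \Omega_X^{[2]} \big)$, $H^0 \big(Z, \Omega_Z^{[2]} \big)$ and $H^0 \big(X^+, \Omega_{X^+}^{[2]} \big)$ is a one-dimensional $\C$-vector space, and by comparing the $G$-characters on these lines via the natural $G$-equivariant maps between them. Concretely, reflexive pullback by $\eta$ yields a $G$-equivariant linear map $\eta^{[*]} \colon H^0 \big(Z, \Omega_Z^{[2]} \big) \to H^0 \big(X, \Omega_X^{[2]} \big)$, which is nonzero because $\eta$ is an isomorphism over a dense open subset of $Z$ on which the symplectic form of $Z$ is nowhere vanishing; consequently the two $G$-characters must coincide, so non-symplecticity on $X$ forces non-symplecticity on $Z$. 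For $X^+$ in the flipping case, I would argue identically, using either $(\eta^+)^{[*]}$ or the $G$-equivariant isomorphism on reflexive two-forms induced by the codimension-one isomorphism $\varphi \colon X \dashrightarrow X^+$; in the divisorial case $X^+ = Z$ and nothing further is required. The only mildly subtle point is the nonvanishing of $\eta^{[*]}$, but since this is a local check over the open locus where $\eta$ is an isomorphism, I do not expect it to be a genuine obstacle.
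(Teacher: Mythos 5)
Your proposal is correct and follows essentially the same route as the paper: (i) and (ii) via \autoref{lem:symplectic_under_maps} together with \autoref{rem:G-equivariant_K-trivial_canonical_sing}(iii) and \autoref{prop:dom_rational_map_from_PSV}, the codimension-one claim by citing Subsection \ref{subsection:MMP_step}, and the non-symplectic claim by comparing the $G$-action on the one-dimensional spaces of reflexive $2$-forms. The paper phrases this last step by first invoking \autoref{lem:group_acting_nonsymplectically_cyclic} to write $G = \langle g \rangle$ with $g^{[*]}\sigma = \zeta\sigma$ and noting the induced forms agree on dense open subsets, but this is the same character-matching argument you give.
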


    \begin{proof}
        Parts (i) and (ii) are proved by arguing as in the proofs of parts (i) and (ii) of \autoref{lem:MMP_step_symplectic}, respectively, taking also \autoref{rem:G-equivariant_K-trivial_canonical_sing}(iii) into account for the former. 
        Now, regarding part (iii), we have already explained in \autoref{subsection:MMP_step} that if $G$ acts freely in codimension one on $X$, then it also does so on both $Z$ and $X^+$. 
        We thus assume henceforth that $G$ acts non-symplectically on $X$. Then $G = \langle g \rangle$ by \autoref{lem:group_acting_nonsymplectically_cyclic},
        where $g \in \Aut(X)$ is a purely non-symplectic automorphism of order $m \geq 2$. Since $X$ is a primitive symplectic variety with symplectic form $\sigma$, by (ii) we know that both $Z$ and $X^+$ are also primitive symplectic varieties with induced symplectic forms $\sigma_Z$ and $\sigma^+$, respectively, which agree on nonempty, dense, open subsets by construction. Since 
        \[ H^0 \big( X,\Omega_{X}^{[2]} \big) = \C \cdot \sigma , \quad H^0 \big( Z,\Omega_{Z}^{[2]} \big) = \C \cdot \sigma_Z , \quad H^0 \big( X^+,\Omega_{X^+}^{[2]} \big) = \C \cdot \sigma^+ , \]
        and since by assumption we have $g^{[*]} \sigma = \zeta \sigma $, where $\zeta$ is a primitive $m$-th root of unity, we conclude that $G = \langle g \rangle$ acts non-symplectically on both $Z$ and $X^+$.
    \end{proof}
    
    \begin{prop}
        \label{prop:IHS_G-equivariant_termination}
        Let $(X,B)$ be a log canonical $G$-pair, where $X$ is a projective IHS manifold and $G$ is a finite subgroup of $\Aut(X)$. Then any $G$-equivariant $(K_X+B)$-MMP terminates with a $G$-minimal model $(X^m, B^m)$ of $(X,B)$, where $X^m$ is a $G\Q$-factorial primitive symplectic variety and $B^m$ is a nef, effective, $G$-invariant, $\R$-Cartier $\R$-divisor on $X^m$.
    \end{prop}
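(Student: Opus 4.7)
The plan is to combine the existing termination result for IHS pairs, \autoref{thm:termination_MMP_symplectic} of Lehn and Pacienza, with the lifting/termination mechanism provided by \autoref{prop:G-equivariant_termination}, and then to track the geometric properties of the resulting model step by step using \autoref{lem:G-equivariant_MMP_step_symplectic}. Concretely, since $X$ is a projective IHS manifold and $(X,B)$ is log canonical, \autoref{thm:termination_MMP_symplectic} ensures that every usual $(K_X+B)$-MMP terminates. Applying \autoref{prop:G-equivariant_termination} with $(X_1,B_1) = (X,B)$, we then deduce that any $G$-equivariant $(K_X+B)$-MMP also terminates, producing a $G$-minimal model $(X^m, B^m)$ of $(X,B)$.

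It then remains to verify the asserted properties of $(X^m, B^m)$. Since $X$ is a smooth IHS manifold, it is in particular a $G\Q$-factorial primitive symplectic variety. By \autoref{lem:G-equivariant_MMP_step_symplectic}(ii), being primitive symplectic is preserved under one $G$-equivariant MMP step, so an immediate induction on the length of the MMP gives that $X^m$ is primitive symplectic. The $G\Q$-factoriality is preserved at each MMP step (see Cases A and B of Subsection \ref{subsection:MMP_step}), whence $X^m$ is $G\Q$-factorial. Moreover, $B^m$ is the strict transform of $B$ through the MMP and is thus automatically effective, $G$-invariant and $\R$-Cartier by construction.

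The main subtle point will be to upgrade the nefness of $K_{X^m}+B^m$ in the equivariant sense to genuine nefness of $B^m$. By \autoref{rem:G-equivariant_K-trivial_canonical_sing}(iii), $K_{X^m} \equiv 0$, so $K_{X^m} + B^m \equiv B^m$. Since $(X^m, B^m)$ is a $G$-minimal model, the divisor $K_{X^m}+B^m$ is nonnegative on every class of $\NEb(X^m)^G$. Given any integral curve $C$ on $X^m$, the averaged class $\tfrac{1}{|G|} \sum_{g \in G} g_*[C]$ lies in $\NEb(X^m)^G$; the $G$-invariance of $B^m$ and the projection formula give $B^m \cdot g_*[C] = B^m \cdot [C]$ for every $g \in G$, so $B^m$ has nonnegative intersection with $[C]$. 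Hence $B^m$ is nef, completing the proof.
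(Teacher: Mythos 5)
Your proposal is correct and follows essentially the same route as the paper: termination via \autoref{thm:termination_MMP_symplectic} combined with \autoref{prop:G-equivariant_termination}, and the properties of $(X^m,B^m)$ via \autoref{lem:G-equivariant_MMP_step_symplectic}(ii) together with the discussion of the equivariant MMP step in Subsection \ref{subsection:MMP_step}. Your explicit averaging argument for upgrading nefness on $\NEb(X^m)^G$ to genuine nefness of $B^m$ is a welcome elaboration of a point the paper leaves implicit.
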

    
    \begin{proof}
        By \autoref{thm:termination_MMP_symplectic} and \autoref{prop:G-equivariant_termination} we infer that any $G$-equivariant $(K_X+B)$-MMP terminates with a $G$-minimal model $(X^m, B^m)$ of $(X,B)$. The assertions about the variety $X^m$ and the $\R$-divisor $B^m$ follow from \autoref{lem:G-equivariant_MMP_step_symplectic}(ii) and the discussion in \autoref{subsection:MMP_step}.
    \end{proof}

    \section{Enriques varieties}
    \label{section:Enriques_varieties}
    
    \subsection{Definitions and basic properties}
    \label{subsection:def_Enriques}
    
    An \emph{Enriques manifold} is a connected complex manifold $Y$ which is not simply connected and whose universal covering $X$ is an IHS manifold, see \cite[Definition 2.1]{OS11a}. Thus, $Y$ is compact, of even dimension $\dim Y = 2n$, and with finite fundamental group.
    In fact, according to \cite[Proposition 2.4]{OS11a}, $\pi_1(Y)$ is a cyclic group whose order $d \geq 2$, called the \emph{index} of $Y$, is a divisor of $n+1$. 
    By \cite[Corollary 2.7]{OS11a} both the Enriques manifold $Y$ and its universal covering $X$ are necessarily projective. Finally, according to \cite[Proposition 2.8]{OS11a}, the torsion subgroup $\Pic^{\text{tor}}(Y)$ of the Picard group of $Y$ is a finite cyclic group of order $d = |\pi_1(Y)| \geq 2$, generated by the canonical bundle $\omega_Y \in \Pic(Y)$.
    
    Note that there exist Enriques manifolds of index $d \in \{2, 3, 4\}$, see \cite{BNWS11,OS11a}. In particular, Enriques surfaces are Enriques manifolds of dimension $2$ and index $2$. For two higher-dimensional examples of index $2$ we also refer to \autoref{example:PEV_from_Catanese}.
    
    \medskip
    
    In the remainder of this subsection we aim to define and to investigate singular analogs of Enriques manifolds. To this end, we first need to discuss a class of varieties defined by Greb, Guenancia and Kebekus \cite[Remark 13.5]{GGK19}. To quote the authors, one might call these \enquote{Enriques varieties}, but we coin the term \enquote{GGK varieties} instead for reasons that will become apparent below.

    \subsubsection{GGK varieties}
    
    \begin{dfn}
        \label{dfn:GGK_variety}
        A \emph{GGK variety} is a normal projective variety $Y$ with klt singularities and numerically trivial canonical divisor which admits a quasi-\'etale cover $\gamma \colon X \to Y$ such that $X$ is an irreducible symplectic variety.
    \end{dfn}

    
    
    Observe that the class of GGK varieties contains the class of irreducible symplectic varieties. In particular, we have the following result in the smooth case.
    
    \begin{lem}
        \label{lem:smooth_GGK}
        Let $Y$ be a GGK variety and assume that $Y$ is smooth. The following statements hold:
        \begin{enumerate}[\normalfont (i)]
            \item If $\pi_1(Y) = \{ 1 \}$, then $Y$ is an IHS manifold.

            \item If $\pi_1(Y) \neq \{ 1 \}$, then $Y$ is an Enriques manifold.
        \end{enumerate}
    \end{lem}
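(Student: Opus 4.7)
The plan is to reduce both parts to the observation that, under the smoothness hypothesis on $Y$, the given quasi-\'etale cover $\gamma \colon X \to Y$ is forced to coincide (up to isomorphism) with the universal cover of $Y$.

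First, since $Y$ is smooth by assumption, \autoref{rem:Galois_and_quasi-etale}(i) upgrades $\gamma$ from quasi-\'etale to \'etale. In particular $X$ is itself smooth and $\gamma$ is a topological covering map. Next, by \autoref{prop:basic_prop_ISV}(ii) we know that the irreducible symplectic variety $X$ is simply connected, so $\gamma \colon X \to Y$ is (isomorphic to) the universal covering of $Y$. Combined with \autoref{rem:irreducible_vs_primitive_symplectic}(i), which identifies smooth irreducible symplectic varieties with IHS manifolds, this already tells us that the universal cover of $Y$ is an IHS manifold.

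For part (i), the hypothesis $\pi_1(Y) = \{1\}$ forces $\gamma$ to be an isomorphism, so $Y \simeq X$ inherits from $X$ the structure of a smooth irreducible symplectic variety, i.e., an IHS manifold. For part (ii), the hypothesis $\pi_1(Y) \neq \{1\}$ means that $Y$ is not simply connected, while the universal cover $X$ of $Y$ is an IHS manifold by the discussion above; this is exactly the definition of an Enriques manifold recalled at the beginning of Subsection \ref{subsection:def_Enriques}.

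There is no substantial obstacle here: the argument is essentially a bookkeeping of definitions, whose only non-trivial ingredients are the upgrade from quasi-\'etale to \'etale in the smooth setting, the simple connectedness of irreducible symplectic varieties, and the smooth-case equivalence between \enquote{irreducible symplectic} and \enquote{IHS}, all of which are already recorded in the preceding material.
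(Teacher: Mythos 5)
Your proposal is correct and follows essentially the same route as the paper's proof: upgrade the quasi-\'etale cover to an \'etale one via \autoref{rem:Galois_and_quasi-etale}(i), conclude that $X$ is a smooth irreducible symplectic variety and hence an IHS manifold, note that $\pi_1(X)=\{1\}$ makes $\gamma$ the universal covering, and read off both cases from the definition of an Enriques manifold. The only difference is that you spell out the citations (e.g.\ \autoref{rem:irreducible_vs_primitive_symplectic}(i)) that the paper leaves implicit.
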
    

    \begin{proof}
        Let $\gamma \colon X\to Y$ be a quasi-\'etale cover of $Y$ such that $X$ is an irreducible symplectic variety. By \autoref{rem:Galois_and_quasi-etale}(i) and the assumption that $Y$ is smooth we infer that $\gamma$ is \'etale, so $X$ is smooth, 
        and thus an IHS manifold; see \autoref{rem:irreducible_vs_primitive_symplectic}(i). Since $\pi_1(X) = \{ 1 \}$, $\gamma$ is the universal covering of $Y$, and the assertions follow.
    \end{proof}
    
    Our first goal is to identify the complementary class to the one of irreducible symplectic varieties in the (larger) class of GGK varieties, using primarily results from \cite{GKP16a,GKP16b,GGK19}.
    
    \begin{notation}
        We denote by $\Hol^\circ (Z)$ (resp.\ $\Hol(Z)$) the \emph{restricted holonomy group} (resp.\ the \emph{holonomy group}) of a normal projective variety $Z$ with klt singularities and numerically trivial canonical divisor, referring to \cite{GGK19} for further details.
    \end{notation}

    According to \cite[Proposition F]{GGK19}, a normal projective variety $X$ of dimension $2n$ with klt singularities and numerically trivial canonical divisor is \emph{irreducible symplectic} if and only if $\Hol(X)$ is isomorphic to the \emph{unitary symplectic group} $\Sp(n)$. More generally, the following result from \cite{GGK19} characterizes GKK varieties in terms of holonomy.
    
    \begin{prop}
        \label{prop:characterization_GGK_holonomy}
        Let $Y$ be a normal projective variety of dimension $2n$ with klt singularities and numerically trivial canonical divisor. Then $Y$ is a GGK variety if and only if $\Hol^\circ(Y) \simeq \Sp(n)$.
    \end{prop}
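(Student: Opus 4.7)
The plan is to reduce the proposition to the holonomy-theoretic characterization of irreducible symplectic varieties, namely \cite[Proposition F]{GGK19}, which asserts that a normal projective variety $X$ of dimension $2n$ with klt singularities and $K_X\equiv 0$ is irreducible symplectic if and only if $\Hol(X)\simeq\Sp(n)$. The two key external inputs I will need from \cite{GGK19} are: (a) the restricted holonomy group $\Hol^\circ$ is invariant under quasi-\'etale covers between normal projective klt varieties with numerically trivial canonical divisor; and (b) given any such $Y$, there exists a quasi-\'etale cover $\gamma\colon X\to Y$, the \emph{holonomy cover}, for which $\Hol(X)=\Hol^\circ(X)$. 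Both facts are established in \cite{GGK19}; I will cite them as such rather than reprove them.

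For the direct implication, I would start from a GGK variety $Y$ of dimension $2n$ and a quasi-\'etale cover $\gamma\colon X\to Y$ with $X$ irreducible symplectic. Since $\gamma$ is quasi-\'etale we have $\dim X=\dim Y=2n$ and $K_X\sim_\Q 0$ by \autoref{rem:singularities_quasi-etale_cover}(i), so \cite[Proposition F]{GGK19} gives $\Hol(X)\simeq\Sp(n)$. Because $\Sp(n)$ is connected, this forces $\Hol^\circ(X)=\Hol(X)\simeq\Sp(n)$. Invoking the invariance of restricted holonomy under quasi-\'etale covers yields $\Hol^\circ(Y)\simeq\Hol^\circ(X)\simeq\Sp(n)$.

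For the converse, suppose $\Hol^\circ(Y)\simeq\Sp(n)$ and let $\gamma\colon X\to Y$ be the holonomy cover from (b). By construction $\Hol^\circ(X)=\Hol(X)$, and by the invariance of restricted holonomy (applied again to $\gamma$) we obtain
\[
\Hol(X)\;=\;\Hol^\circ(X)\;\simeq\;\Hol^\circ(Y)\;\simeq\;\Sp(n).
\]
Since $\gamma$ is quasi-\'etale, $X$ is again a normal projective klt variety of dimension $2n$ with numerically trivial canonical divisor, so \cite[Proposition F]{GGK19} applies and shows that $X$ is irreducible symplectic. Thus $Y$ is a GGK variety by \autoref{dfn:GGK_variety}.

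The only nontrivial work is to properly invoke the two statements (a) and (b) from \cite{GGK19}; the rest is a tautological combination. The expected main obstacle is just bookkeeping: making sure that the holonomy cover produced in (b) is indeed quasi-\'etale with the codomain $Y$ (not a further blow-up or resolution) and that the singular Bochner-type machinery of \cite{GGK19} applies to the hypotheses we have, namely klt singularities together with $K_Y\equiv 0$ (as opposed to $K_Y\sim_\Q 0$). Both points are addressed in \cite{GGK19}, so the argument goes through by citation.
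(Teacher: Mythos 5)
Your proposal is correct and follows essentially the same route as the paper: the forward direction combines \cite[Proposition F]{GGK19} with the invariance of restricted holonomy under quasi-\'etale covers (\cite[Remark 5.2]{GGK19}), and the converse uses the holonomy cover from \cite[Theorem 12.1]{GGK19} exactly as the paper does. The only cosmetic difference is that you deduce $\Hol^\circ(X)=\Hol(X)$ from the connectedness of $\Sp(n)$, where the paper cites \cite[Proposition F, (F.4)--(F.6)]{GGK19} directly.
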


    \begin{proof}
        Assume first that $Y$ is a GGK variety and consider a quasi-\'etale cover $X \to Y$ of $Y$ such that $X$ is an irreducible symplectic variety. It follows immediately from \cite[Remark 5.2 and Proposition F, (F.4)--(F.6)]{GGK19} that
        \[ \Hol(Y)^\circ = \Hol(X)^\circ = \Hol(X) \simeq \Sp(n) . \]
        
        Assume now that $\Hol^\circ(Y) \simeq \Sp(n)$. By \cite[Theorem 12.1]{GGK19} the tangent sheaf of $Y$ is strongly stable and there exists a quasi-\'etale cover $\gamma \colon X \to Y$ of $Y$ such that 
        \[ \Hol(X) = \Hol^\circ(X) = \Hol^\circ(Y) \simeq \Sp(n) . \]
        Therefore, $X$ is an irreducible symplectic variety by \cite[Proposition F, (F.4)--(F.6)]{GGK19}, and hence $Y$ is a GGK variety by definition.
    \end{proof}
    
    We collect below some basic properties of GGK varieties, which were essentially established in \cite{GKP16b,GGK19}. 
    
    \begin{prop}
        \label{prop:basic_prop_GGK}
        Let $Y$ be a GGK variety of dimension $2n$. The following statements hold:
        \begin{enumerate}[\normalfont (i)]
            \item $\widetilde{q}(Y) = 0$.
            
            \item $\pi_1(Y)$ is finite.
            
            \item For any quasi-\'etale cover $\gamma \colon Y' \to Y$ of $Y$, the variety $Y'$ is also a GGK variety.
            
            \item The tangent sheaf of $Y$ is strongly stable in the sense of \cite[Definition 7.2]{GKP16b}.

            \item $1 \leq \chi(Y, \OO_Y) \leq n+1$.
        \end{enumerate}
    \end{prop}

    \begin{proof}~

        \medskip

        \noindent (i) 
        Follows immediately from \autoref{rem:augmented_irreg}(iii) and \autoref{prop:basic_prop_ISV}(i).
        
        \medskip

        \noindent (ii) 
        Let $X \to Y$ be a quasi-\'etale cover of $Y$ such that $X$ is an irreducible symplectic variety. By \cite[Proposition 1.3]{Campana91} the image of $\pi_1(X)$ in $\pi_1(Y)$ has finite index. Since $\pi_1(X) = \{ 1 \}$ by \autoref{prop:basic_prop_ISV}(ii), we infer that $\pi_1(Y)$ is finite.

        \medskip

        \noindent (iii) 
        Follows from \autoref{prop:characterization_GGK_holonomy}, taking \cite[Reminder 2.14 and Remark 5.2]{GGK19} into account.
        %
        
        \medskip
        
        \noindent (iv) 
        This was shown earlier in the proof of \autoref{prop:characterization_GGK_holonomy}.
        
        \medskip
        
        \noindent (v) 
        By part (iv) and \cite[Theorem 12.2 (12.2.2)]{GGK19} we obtain
        \[
            h^0 \big( Y,\Omega_Y^{[p]} \big) \leq 
            \begin{cases}
                1 , \text{ if } 0 \leq p \leq 2n \text{ and if }p\text{ is even,}\\
                0 , \text{ otherwise} . 
            \end{cases}
        \]
        This, together with \cite[Proposition 6.9]{GKP16b}, yield the assertion.
    \end{proof}
    
    We would like to thank Mirko Mauri for pointing out to us the following result, which gives a practical description of GGK varieties.
    
    \begin{lem}
        \label{lem:GGK_cover}
        If $Y$ is a GGK variety, then there exists a quasi-\'etale cover $\gamma \colon X \to Y$ such that $X$ is an irreducible symplectic variety and $Y\simeq X/G$, where $G\subseteq \Aut(X)$ is a finite cyclic group acting non-symplectically on $X$, unless $Y$ itself is an irreducible symplectic variety and in this case we have $G=\{ \Id_X \}$.
    \end{lem}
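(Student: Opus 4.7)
The plan is a two-step construction: first take a Galois closure to get a Galois quasi-étale cover with irreducible symplectic total space, then quotient out the ``symplectic part'' of the Galois group so the residual group acts faithfully on the one-dimensional space of reflexive $2$-forms (hence is cyclic and non-symplectic).

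If $Y$ is already irreducible symplectic, take $X \coloneqq Y$ and $G \coloneqq \{\Id_X\}$. Otherwise, pick a quasi-étale cover $\gamma_0 \colon X_0 \to Y$ with $X_0$ irreducible symplectic, as supplied by \autoref{dfn:GGK_variety}. Since $\gamma_0$ is étale over a big open subset $U \subseteq Y$ containing $Y_\text{reg}$ (by purity of branch locus), take the Galois closure of $\gamma_0|_U$ and normalize $Y$ in the resulting field extension to obtain a Galois quasi-étale cover $\hat\gamma \colon \hat X \to Y$ with finite group $\hat G \subseteq \Aut(\hat X)$. The induced map $\hat X \to X_0$ is quasi-étale, so \autoref{prop:basic_prop_ISV}(iii) gives that $\hat X$ is itself an irreducible symplectic variety with symplectic form $\hat\sigma$. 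Consider the character $\chi \colon \hat G \to \C^\ast$ obtained from the action of $\hat G$ on $H^0(\hat X, \Omega_{\hat X}^{[2]}) = \C \cdot \hat\sigma$, set $\hat G_s \coloneqq \ker \chi$, and define $X \coloneqq \hat X / \hat G_s$ and $G \coloneqq \hat G / \hat G_s$. Then $\gamma \colon X \to Y$ is a Galois quasi-étale cover with group $G$, and $G$ embeds into $\C^\ast$ via $\chi$, hence is a finite cyclic group each of whose non-identity elements acts non-symplectically.

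The main obstacle is verifying that $X$ is an irreducible symplectic variety. First, $X$ is normal and projective with klt singularities by \autoref{rem:singularities_quasi-etale_cover}(i), hence with rational singularities by \cite[Theorem 5.22]{KM98}; the $\hat G_s$-invariant form $\hat\sigma$ descends to a symplectic form $\sigma_X$ on $X$ (using that $\hat X \to X$ is étale over $X_\text{reg}$), so by the Beauville--Namikawa criterion recalled in Subsection \ref{subsection:def_symplectic}, $X$ is a symplectic variety. For the irreducibility condition, let $W \to X$ be any quasi-étale cover. Take the Galois closure of $W \to X$, form the fiber product with $\hat X$, normalize, and pick a connected component to obtain a variety $\tilde W$ fitting into a tower $\tilde W \to W \to X$ and $\tilde W \to \hat X$, where $\tilde W \to \hat X$ is quasi-étale and $\tilde W \to W$ is Galois quasi-étale with some group $H$. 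By \autoref{prop:basic_prop_ISV}(iii), $\tilde W$ is irreducible symplectic with symplectic form $\sigma_{\tilde W}$ which (being the pullback from $W$) is $H$-invariant. Using \autoref{rem:Galois_and_quasi-etale}(iii), we obtain
\[
    H^0\big(W, \Omega_W^{[k]}\big) \simeq H^0\big(\tilde W, \Omega_{\tilde W}^{[k]}\big)^H ,
\]
which vanishes for odd $k$ and is spanned by the $H$-invariant class $\sigma_{\tilde W}^{k/2}$ for even $k$; descending via $\tilde W \to W$, this identifies $H^0(W, \Omega_W^{[2j]}) = \C \cdot \sigma_W^{j}$, where $\sigma_W$ is the pullback of $\sigma_X$. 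Hence the exterior algebra of reflexive forms on $W$ is generated by $\sigma_W$, proving $X$ is irreducible symplectic.

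Finally, if $Y$ is not irreducible symplectic then $G$ is non-trivial: otherwise $X = Y$, and the previous paragraph would force $Y$ itself to be irreducible symplectic, contradicting the assumption. This completes the dichotomy in the statement.
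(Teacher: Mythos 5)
Your proof is correct and follows the same skeleton as the paper's: pass to a Galois closure $\hat X \to Y$ (so that $\hat X$ is again irreducible symplectic by \autoref{prop:basic_prop_ISV}(iii)), let $\hat G_s$ be the kernel of the character through which $\hat G$ acts on $H^0 \big( \hat X, \Omega_{\hat X}^{[2]} \big) = \C \cdot \hat\sigma$, and set $X \coloneqq \hat X / \hat G_s$ and $G \coloneqq \hat G / \hat G_s \hookrightarrow \C^*$, which is automatically cyclic and acts non-symplectically. The one substantive difference lies in the key step, namely that the symplectic quasi-\'etale quotient $X = \hat X / \hat G_s$ is again an \emph{irreducible} symplectic variety: the paper simply invokes \cite[Proposition 3.16]{BGMM24} for this, whereas you prove it directly (descent of the invariant form plus rational singularities gives symplecticity via the Beauville--Namikawa criterion; for irreducibility you dominate an arbitrary quasi-\'etale cover $W \to X$ by a common Galois quasi-\'etale cover $\tilde W$ of both $W$ and $\hat X$ and compute $H$-invariants). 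This makes the lemma self-contained at the cost of a longer argument, and it is sound. Two points are worth tightening: the existence and quasi-\'etaleness of the Galois closure $\hat X \to Y$ should be attributed to \cite[Theorem 3.7]{GKP16a}, as the paper does; and the assertion that the chosen component $\tilde W$ of the normalized fibre product is \emph{Galois} over $W$ deserves a sentence of justification --- over the common \'etale locus, $\tilde W$ corresponds to the intersection of the two normal subgroups of the fundamental group defining the Galois covers $W' \to X$ and $\hat X \to X$, and this intersection is normal in every subgroup containing it, in particular in the one corresponding to $W$.
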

    
    \begin{proof}
        By definition there exists a quasi-\'etale cover $\gamma\colon X\to Y$ of $Y$ such that $X$ is an irreducible symplectic variety. By the existence of Galois closures, see \cite[Theorem 3.7]{GKP16a}, there exists a Galois quasi-\'etale cover $\widetilde{\gamma}\colon \widetilde{X}\to X$ of $X$ such that the composite map $\mu\coloneqq \gamma \circ \widetilde{\gamma} \colon \widetilde{X}\to Y$ is a Galois quasi-\'etale cover of $Y$. Note that $\widetilde{X}$ is also an irreducible symplectic variety by \autoref{prop:basic_prop_ISV}(iii).
        Denote by $G \subseteq \Aut(\widetilde{X})$ the finite group defining $\mu$, consider the induced surjective group homomorphism $\rho_G\colon G\to \mu_m$ and set $ G_\text{sym} \coloneqq \ker(\rho_G)$, see \autoref{subsection:finite_group_actions}.
        If $\pi_{G_\text{sym}} \colon \widetilde{X}\to X' \simeq \widetilde{X}/{G_\text{sym}}$ is the Galois cover defined by $G_\text{sym}$, then $\mu$ factors through $\pi_{G_\text{sym}}$, i.e., there exists a Galois cover $\pi_{G/G_\text{sym}}\colon X' \simeq \widetilde{X}/{G_\text{sym}} \to Y \simeq \widetilde{X}/G$ defined by the cyclic group $G/{G_\text{sym}}\simeq \mu_m$
        such that the following diagram commutes:
        \begin{center}
            \begin{tikzcd}
            & \widetilde{X} \arrow[ld, "\widetilde \gamma"'] \arrow[rd, "\pi_{G_\text{sym}}"] \arrow[dd, "\mu"]& \\
            X \arrow[rd, "\gamma"'] &  & X' \arrow[ld, "\pi_{G/G_\text{sym}}"] \\
            & Y &                                      
            \end{tikzcd}
        \end{center}
        As $\mu$ is quasi-\'etale, the Galois covers $\pi_{G_\text{sym}}$ and $\pi_{G/{G_\text{sym}}}$ are also quasi-\'etale. Since $\widetilde{X}$ is an irreducible symplectic variety and since $G_\text{sym}$ acts symplectically on $\widetilde{X}$, by \cite[Proposition 3.17]{BGMM25} we conclude that $X' \simeq \widetilde{X}/{G_\text{sym}}$ is also an irreducible symplectic variety.
        Therefore, the map $\pi_{G/{G_{\text{sym}}}} \colon X'\to Y $ is a Galois quasi-\'etale cover defined by a cyclic group that is generated by a purely non-symplectic automorphism $g \in \Aut(X')$ of order $m=| G / G_\text{sym} |$.
    \end{proof}
    
    With the aid of the previous results we can now characterize those GGK varieties that are not irreducible symplectic.
    
    \begin{prop}
        \label{prop:characterization_IEV}
        Let $Y$ be a GGK variety. The following statements are equivalent:
        \begin{enumerate}[\normalfont (i)]
            \item $\Hol^\circ(Y) \subsetneq \Hol(Y) $.

            \item $Y \simeq X/G$, where $X$ is an irreducible symplectic variety and $G \subseteq \Aut(X)$ is a nontrivial finite cyclic group whose action is non-symplectic and free in codimension one on $X$.

            \item $ H^0 \big( Y,\Omega_Y^{[2]} \big) = \{ 0 \}$.
        \end{enumerate}
    \end{prop}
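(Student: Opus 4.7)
The plan is to prove the equivalence through the cycle (i) $\Rightarrow$ (ii) $\Rightarrow$ (iii) $\Rightarrow$ (i). The essential tools are already in place: \autoref{lem:GGK_cover} realizes every GGK variety as a quasi-\'etale cyclic quotient of an irreducible symplectic variety; \autoref{prop:characterization_GGK_holonomy} ensures that $\Hol^\circ(Y) \simeq \Sp(n)$ on any GGK variety, whereas Proposition F of \cite{GGK19} characterizes irreducible symplectic varieties as exactly those for which $\Hol(Y) \simeq \Sp(n)$; and \autoref{rem:Galois_and_quasi-etale}(iii) identifies reflexive forms downstairs with $G$-invariant reflexive forms upstairs.

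For (i) $\Rightarrow$ (ii), I would apply \autoref{lem:GGK_cover} to write $Y \simeq X/G$ with $X$ irreducible symplectic and $G$ either trivial or a finite cyclic group acting non-symplectically on $X$. In the first case $Y$ is itself irreducible symplectic, so Proposition F of \cite{GGK19} gives $\Hol(Y) \simeq \Sp(n) \simeq \Hol^\circ(Y)$, contradicting (i). Hence $G$ is a non-trivial finite cyclic group acting non-symplectically, and its action is free in codimension one by \autoref{rem:Galois_and_quasi-etale}(ii), yielding (ii).

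For (ii) $\Rightarrow$ (iii), \autoref{rem:Galois_and_quasi-etale}(iii) together with the fact that $X$ is an irreducible symplectic variety yields
\[
    H^0\bigl(Y,\Omega_Y^{[2]}\bigr) \simeq H^0\bigl(X,\Omega_X^{[2]}\bigr)^G = (\C \cdot \sigma)^G,
\]
where $\sigma$ is the symplectic form on $X$. By \autoref{lem:group_acting_nonsymplectically_cyclic}, $G = \langle g \rangle$ is generated by a purely non-symplectic automorphism, so $g^{[*]}\sigma = \zeta\sigma$ for some primitive root of unity $\zeta \neq 1$, and the space of $G$-invariants therefore vanishes.

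Finally, for (iii) $\Rightarrow$ (i) I would argue by contraposition: if $\Hol^\circ(Y) = \Hol(Y)$, then \autoref{prop:characterization_GGK_holonomy} forces $\Hol(Y) \simeq \Sp(n)$, so Proposition F of \cite{GGK19} makes $Y$ irreducible symplectic and hence $H^0\bigl(Y,\Omega_Y^{[2]}\bigr) = \C \cdot \sigma_Y \neq \{0\}$. I do not anticipate any genuine obstacle: each implication reduces either to a structural result already available in the paper or to a short invariant-theoretic computation on $\C \cdot \sigma$. The most delicate aspect is the twofold appeal to Proposition F of \cite{GGK19}, which is precisely what links the holonomy-theoretic viewpoint (i) to the algebraic quotient description (ii), with (iii) serving as the clean bridge between them.
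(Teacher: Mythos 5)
Your proposal is correct and follows essentially the same route as the paper: the same cycle (i) $\Rightarrow$ (ii) $\Rightarrow$ (iii) $\Rightarrow$ (i), using \autoref{lem:GGK_cover} together with Proposition F of \cite{GGK19} for the first implication, the $G$-invariance identification of \autoref{rem:Galois_and_quasi-etale}(iii) on $\C \cdot \sigma$ for the second, and \autoref{prop:characterization_GGK_holonomy} plus Proposition F for the third. The only cosmetic differences are that you phrase (i) $\Rightarrow$ (ii) and (iii) $\Rightarrow$ (i) contrapositively where the paper argues directly, and in (ii) $\Rightarrow$ (iii) you compute the invariants via the eigenvalue $\zeta \neq 1$ rather than deriving a contradiction from $h^0 = 1$; these are equivalent.
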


    \begin{proof}~

        \medskip

        \noindent (i) $\implies$ (ii): Since $Y$ is a GGK variety, by assumption and by \cite[Proposition F, (F.4)--(F.6)]{GGK19} we know that $Y$ cannot be an irreducible symplectic variety, so the statement follows now from \autoref{lem:GGK_cover}, taking \autoref{rem:Galois_and_quasi-etale}(ii) into account.
        
        \medskip

        \noindent (ii) $\implies$ (iii): By assumption and by \autoref{rem:Galois_and_quasi-etale}(iv) we obtain
        \[ 
            H^0 \big(Y,\Omega_Y^{[2]} \big) \simeq H^0 \big(X,\Omega_X^{[2]} \big)^G ,
        \]
        whence 
        \[ 
            0 \leq h^0 \big(Y,\Omega_Y^{[2]} \big) \leq h^0 \big(X,\Omega_X^{[2]} \big) = 1 . 
        \]
        Assume now that $h^0 \big( Y,\Omega_Y^{[2]} \big) = 1$. Then the symplectic form $\sigma$ on $X$ satisfies $g^{[*]} \sigma = \sigma$ for every $g \in G$, which contradicts the hypothesis that $G$ acts non-symplectically on $X$. Thus, 
        $ H^0 \big(Y,\Omega_Y^{[2]} \big) = \{ 0 \} $.
        
        \medskip

        \noindent (iii) $\implies$ (i): Since $Y$ is a GGK variety, by \autoref{prop:characterization_GGK_holonomy} we know that $\Hol^\circ(Y) \simeq \Sp(n)$. But $Y$ is not a symplectic variety by assumption, so we obtain $\Hol^\circ(Y) \neq \Hol(Y)$ by \cite[Proposition F, (F.4)--(F.6)]{GGK19}.
    \end{proof}
    
    \subsubsection{Irreducible Enriques varieties}

    Our previous considerations lead naturally to the definition of the following class of singular Enriques varieties.
    
    \begin{dfn}
        \label{dfn:IEV}
        An \emph{irreducible Enriques} variety is a GGK variety which satisfies any of the equivalent conditions of \autoref{prop:characterization_IEV}.
    \end{dfn}
    
    We first claim that Enriques manifolds coincide with smooth irreducible Enriques varieties. Indeed, it follows from \autoref{lem:smooth_GGK} and \autoref{prop:characterization_IEV}(iii) that \autoref{dfn:IEV} reduces to \cite[Definition 2.1]{OS11a} in the smooth case. Conversely, any Enriques manifold $Y$ is an \'etale quotient $Y \simeq X / \pi_1(Y)$, where $X$ is a projective IHS manifold, as the finite cyclic group $\pi_1(Y)$ acts freely on $X$. By the holomorphic Lefschetz fixed point formula we deduce that $\pi_1(Y)$ acts non-symplectically on $X$, see \cite[Subsection 2.2]{BNWS11}. Hence, $Y$ is a smooth irreducible Enriques variety by \autoref{prop:characterization_IEV}(ii).
    
    We present below some examples of singular irreducible Enriques varieties, which also indicate some fundamental differences between Enriques manifolds and their singular analogs. Specifically, in constrast to the smooth case, singular irreducible Enriques varieties may also be simply connected (see \autoref{example:IEV_via_anti-symplectic_involution}, \autoref{example:uniruled_Enriques} and \autoref{example:PEV_from_CCCF}) or uniruled (see \autoref{example:uniruled_Enriques} and \autoref{example:PEV_from_CCCF}) or have trivial canonical divisor (see \autoref{example:IEV_via_anti-symplectic_involution}).
    
    \begin{exa}
        \label{example:IEV_via_anti-symplectic_involution}
        Let $X$ be an IHS manifold of dimension $\dim X = 2n \geq 4$ endowed with an anti-symplectic involution $\tau \in \Aut(X)$. If the fixed locus $\Fix(\tau)$ of $\tau$ is not empty,
        then it is a finite disjoint union of Lagrangian submanifolds of $X$ by \cite[Lemma 1]{Beauville11}. 
        Therefore, the quotient $Y \coloneqq X/ \langle \tau \rangle$ is an irreducible Enriques variety whose singularities are concentrated along the image of $\Fix(\tau)$ in $Y$. Note that $Y$ is simply connected by \autoref{rem:fundamental_group_PEV}(ii), $\Q$-factorial by \cite[Proposition 5.15]{KM98}, 
        and has canonical singularities by \cite[Corollary 2.31(2) and Proposition 5.20(2)]{KM98}.
        In particular, $Y$ is not uniruled.
        In view of \autoref{rem:Galois_and_quasi-etale}(iv), we also observe that if $n$ is even, then $K_Y \sim 0$, whereas if $n$ is odd, then $K_Y \not \sim 0$.
        
        We now reproduce \cite[Example 14.9]{GGK19} to illustrate the case $n=2$ of the above construction. Let $S$ be a K3 surface endowed with an anti-symplectic involution $\tau$. Then $S$ is projective by \cite[Proposition 6(i)]{Beauville83b} and the punctual Hilbert scheme $X \coloneqq S^{[2]}$ is a projective IHS fourfold endowed with an anti-symplectic involution $\tau^{[2]}$ 
        such that $\Fix\big( \tau^{[2]} \big) \neq \emptyset$, since $\tau^{[2]}$ fixes subspaces of $S^{[2]}$ of the form $ \big \{ x , \tau^{[2]}(x) \big \}$, where $x \in S$.
        Denote by $Y$ the associated irreducible Enriques variety $X / \langle \tau^{[2]} \rangle$.
        If $\sigma$ is a holomorphic symplectic form on $X$, then $\big( \tau^{[2]} \big)^* \sigma = -\sigma$, so $\big( \tau^{[2]} \big)^* \sigma^2 = \sigma^2$. By \autoref{rem:Galois_and_quasi-etale}(iv) 
        we deduce that
        \[ 
            h^0 \big(Y,\Omega_Y^{[0]} \big) = h^0 \big(Y,\Omega_Y^{[4]} \big) = 1 \ \text{ and } \ h^0 \big(Y,\Omega_Y^{[1]} \big) = h^0 \big(Y,\Omega_Y^{[2]} \big) = h^0 \big(Y,\Omega_Y^{[3]} \big) = 0 . 
        \]
        In particular, $K_Y \sim 0$. 
        
        Finally, it is worthwhile to mention that, according to \cite[Theorem 3.6]{CGM19} and \cite[Theorem 1.1]{Tak03}, the simply connected variety $Y$ has canonical but not terminal singularities and admits a crepant resolution by a strict Calabi--Yau fourfold in the sense of \cite[Proposition 2]{Beauville83a}.
    \end{exa}

    \begin{exa}
        \label{example:uniruled_Enriques}
        Consider the quotient $Y_{3,0} \coloneqq X_{3,0} / \langle \sigma_{3,0} \rangle$ from \cite[Proposition 4.7]{AS08}, where $X_{3,0} \subset \P^4$ is a K3 surface and $\sigma_{3,0} \in \Aut(X_{3,0})$ is a purely non-symplectic automorphism of order $3$ whose fixed locus consists of $3$ isolated points: 
        $ \Fix(\sigma_{3,0}) = \{ p_1, p_2 , p_3 \} $. The action of $\sigma$ can be locally linearized and diagonalized at each $p_i$ and is given each time by the matrix
        \[
            \begin{pmatrix}
                \zeta_3^2 & 0 \\[0.25em]
                0 & \zeta_3^2 
            \end{pmatrix} 
            \in \GL(2,\C) \setminus \SL(2,\C) , 
        \]
        where $\zeta_3$ is a primitive third root of unity, see \cite[Section 2]{AS08}.
        Note that $Y_{3,0}$ is an irreducible Enriques variety with klt, but not canonical, singularities, see \cite[Proposition 4.18, Theorem 4.20 and Remark 4.21(1)]{KM98}.
        Hence, $Y_{3,0}$ is uniruled. 
        By \autoref{rem:fundamental_group_PEV}(ii) we also infer that $\pi_1 (Y_{3,0}) = \{ 1 \}$, while by \autoref{rem:Galois_and_quasi-etale}(iv) we obtain
        $h^0 \big(Y_{3,0},\Omega_{Y_{3,0}}^{[2]} \big) = 0$, and thus $K_{Y_{3,0}} \not\sim 0$.
        We thank Mirko Mauri for bringing this example to our attention and we refer to \cite{AST11,BrandhorstHofmann23} for further similar examples.
    \end{exa}

    We finally generalize \cite[Proposition 2.6]{OS11a} to the setting of irreducible Enriques varieties.
	
	\begin{prop}
		\label{prop:algebra_reflexive_forms_IEV}
		If $Y \simeq X / G$ is an irreducible Enriques variety of dimension $2n$, where $|G| = d \geq 2$, then for any integer $ p \in \{ 0, \dots, 2n \}$ we have
		\[  
			h^0 \big( Y, \Omega_Y^{[p]} \big) =
			\begin{cases}
				1 , & \text{if } p \text{ is even and } d \mid \dfrac{p}{2} \, , \\[0.4em]
				0 , & \text{otherwise} .
			\end{cases}
		\]
		In particular,
		\[ 
			1 \leq \chi(Y, \OO_Y) \leq n .
		\]
	\end{prop}
	
	\begin{proof}
		By \autoref{prop:characterization_IEV}(ii) there exist an irreducible symplectic variety $X$ and a finite group $G \subseteq \Aut(X)$ acting non-symplectically and freely in codimension one on $X$ such that $Y \simeq X/G$ (as in the statement). In particular, the map $\gamma \colon X \twoheadrightarrow X/G \simeq Y$ is a Galois quasi-\'etale cover of $Y$. By \autoref{lem:group_acting_nonsymplectically_cyclic} we know that $G$ is cyclic, generated by a purely non-symplectic automorphism $g \in \Aut(X)$ of order $d \geq 2$.
		We now distinguish two cases: 

        \medskip
        \emph{Case A}: 
        If $0 \leq p \leq 2n$ is odd, then $H^0 \big( X, \Omega_X^{[p]} \big) = \{ 0 \}$, so by \autoref{rem:Galois_and_quasi-etale}(iv) we also obtain $H^0 \big( Y, \Omega_Y^{[p]} \big) = \{ 0 \}$.

        \medskip
        \emph{Case B}: 
        If $0 \leq p \leq 2n$ is even, then we denote by $\sigma \in H^0 \big( X, \Omega_X^{[2]} \big)$ the (unique up to scalar) symplectic form on $X$ and we recall that $H^0 \big( X, \Omega_X^{[p]} \big) = \C \cdot \sigma^{p/2}$. Since by assumption we have $g^{[*]} \sigma = \zeta_d \, \sigma$, where $\zeta_d$ is a primitive $d$-th root of unity, we deduce that 
		\[ 
			g^{[*]} \, \sigma^{p/2} = \big( g^{[*]} \sigma \big)^{p/2} = \zeta_d^{p/2} \, \sigma^{p/2} , 
		\]
		and hence $\sigma^{p/2}$ is $G$-invariant if and only if $d \mid \dfrac{p}{2}$, so the assertion follows immediately from \autoref{rem:Galois_and_quasi-etale}(iv).
	\end{proof}

    \subsubsection{Primitive Enriques varieties}

    We now introduce another class of singular Enriques varieties. Our definition was originally motivated by our MMP considerations, but it is also a natural generalization of \autoref{dfn:IEV}.
    
    \begin{dfn}
        \label{dfn:PEV}
        A \emph{primitive Enriques variety} $Y$ is the quotient of a primitive symplectic variety $X$ by a finite group $G \subseteq \Aut(X)$ whose action is non-symplectic and free in codimension one.
    \end{dfn}

    Some comments about our \autoref{dfn:PEV} are now in order. 
    The group $G$ is cyclic by \autoref{lem:group_acting_nonsymplectically_cyclic}, and if $\dim X \geq 4$, then $G$ acts automatically freely in codimension one on $X$ by \autoref{prop:group_acting_nonsymplectically_free_codim_one}.
    The reason why we nonetheless kept in \autoref{dfn:PEV} the condition that $G$ acts \enquote{freely in codimension one}  
    is clarified by the following facts in the case of surfaces, which also illustrate why the construction presented in \autoref{example:IEV_via_anti-symplectic_involution} does not have, in general, the desired outcome when $\dim X = 2$. Namely, if $S$ is a K3 surface carrying an anti-symplectic involution $\tau \in \Aut(S)$, then either $\Fix(\tau) = \emptyset$ and the quotient $S / \langle \tau \rangle$ is an Enriques surface, or $\codim_S \big( \Fix(\tau) \big) = 1$ and the quotient $S / \langle \tau \rangle$ is a smooth rational surface; see \cite[Section 3]{AST11}. In the latter case we have the following example: 
    If $f \colon S \to \P^2$ is the double cover of $\P^2$ ramifying along a sextic, then $S$ is a K3 surface, the involution $\tau \in \Aut(S)$ associated with $f$ is anti-symplectic, and the fixed locus of $\tau$ is a curve. In particular, we have $\P^2 \simeq S / \langle \tau \rangle$. However, this quotient is certainly \emph{not} an Enriques surface.
    
    Primitive Enriques varieties have klt singularities and torsion canonical class due to Remark \ref{rem:singularities_quasi-etale_cover}(i). According to \autoref{lem:uniruled_K-trivial}, they are not uniruled if and only if they have canonical singularities. In addition, as we demonstrate below, \autoref{dfn:PEV} reduces to \cite[Definition 2.1]{OS11a} in the smooth case.

    \begin{lem}
        \label{lem:smooth_PEV}
        Let $Y$ be a primitive Enriques variety. If $Y$ is smooth, then it is an Enriques manifold.
    \end{lem}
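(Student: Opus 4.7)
The plan is to unwind the definition of a primitive Enriques variety and combine the purity of the branch locus with the smooth-case characterization of IHS manifolds among primitive symplectic varieties.

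By \autoref{dfn:PEV}, there exists a primitive symplectic variety $X$ and a finite (necessarily nontrivial, since the quotient is to be Enriques rather than symplectic) subgroup $G \subseteq \Aut(X)$ acting non-symplectically and freely in codimension one, together with the quasi-\'etale Galois quotient map $\gamma \colon X \to Y \simeq X/G$. Since $Y$ is smooth by hypothesis, \autoref{rem:Galois_and_quasi-etale}(i) upgrades $\gamma$ from quasi-\'etale to \'etale. As \'etale morphisms preserve smoothness, $X$ is smooth as well.

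At this point $X$ is a smooth primitive symplectic variety. I would then invoke \autoref{rem:irreducible_vs_primitive_symplectic}(i), according to which a smooth primitive symplectic variety is irreducible symplectic, which in the smooth setting is the same as being an IHS manifold. Thus $X$ is a (projective) IHS manifold. Since $\gamma$ is an \'etale Galois cover, \autoref{rem:Galois_and_quasi-etale}(ii) implies that $G$ acts freely on $X$. As $X$ is simply connected (by definition of an IHS manifold), $\gamma \colon X \to Y$ coincides with the universal covering of $Y$, and $G \simeq \pi_1(Y)$.

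To conclude, it remains to check the three conditions defining an Enriques manifold: $Y$ is a connected compact complex (in fact projective, as a finite quotient of a projective variety) manifold; $Y$ is not simply connected, because $G$ is nontrivial (it must contain a non-symplectic automorphism); and the universal cover of $Y$ is the IHS manifold $X$. These three properties together give that $Y$ is an Enriques manifold in the sense of the beginning of Subsection \ref{subsection:def_Enriques}. I do not foresee a genuine obstacle here: every step is a direct application of a lemma or remark already proved in the paper, and the main content is the combination of purity of the branch locus (smoothness of $Y$ makes $\gamma$ \'etale and hence the action free) with the equivalence \enquote{smooth primitive symplectic $=$ IHS}.
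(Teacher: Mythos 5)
Your proof is correct and follows essentially the same route as the paper: smoothness of $Y$ makes $\gamma$ \'etale by purity, hence $X$ is a smooth primitive symplectic variety, i.e.\ an IHS manifold, and $\gamma$ is the universal covering. The only cosmetic difference is at the final step, where you assert that $G$ is nontrivial by fiat, whereas the paper deduces that $\gamma$ cannot be an isomorphism from \autoref{prop:basic_prop_PEV}(iii) (vanishing of $H^0\big(Y,\Omega_Y^{[2]}\big)$), which is the cleaner way to rule out the degenerate case.
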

    
    \begin{proof}
        By definition of a primitive Enriques variety there exists a Galois quasi-\'etale cover $X \to Y$ of $Y \simeq X / G$, where $X$ is a primitive symplectic variety and $G \subseteq \Aut(X)$ is a finite nontrivial subgroup acting non-symplectically on $X$.
        By \autoref{rem:Galois_and_quasi-etale}(i) and the assumption that $Y$ is smooth, we conclude that $\gamma$ is \'etale, and thus $X$ is smooth, 
        so it is an IHS manifold; see \autoref{rem:irreducible_vs_primitive_symplectic}(i). Since $\pi_1(X) = \{ 1 \}$, we infer that $\gamma$ is the universal covering of $Y$, and it cannot be an isomorphism, since $G \neq \{ \Id_X \}$. Therefore, $Y$ is an Enriques manifold.
    \end{proof}

    In what follows we compare Definitions \ref{dfn:IEV} and \ref{dfn:PEV}.
    
    \begin{rem}~
        \label{rem:irreducible_vs_primitive_Enriques}
        \begin{enumerate}[(i)]
            \item In the smooth case, Lemmata \ref{lem:smooth_GGK} and \ref{lem:smooth_PEV} show that being irreducible Enriques is equivalent to being primitive Enriques: these are precisely the Enriques manifolds.
        
            \item In the singular setting, \autoref{rem:irreducible_vs_primitive_symplectic}(ii) and \autoref{prop:characterization_IEV}(ii) imply that irreducible Enriques varieties are primitive Enriques, but the converse does not hold in general; see \autoref{example:PEV_from_Catanese}(b).
        \end{enumerate}
    \end{rem}

    A quasi-\'etale cover of a primitive Enriques variety is not necessarily a primitive symplectic variety, as a quasi-\'etale cover of a primitive symplectic variety is not necessarily a primitive symplectic variety; see Examples \ref{example:symmetric_product_K3s} and \ref{example:PEV_from_Catanese}(b). 
    Similarly, a crepant resolution of a primitive Enriques variety need not be an Enriques manifold, cf.\ \cite[Remark 3.3(2)]{BL21}. For instance, \cite[Section 4]{CGM19} contains several explicit examples of crepant resolutions of irreducible Enriques varieties which are strict Calabi--Yau manifolds; see also \autoref{example:IEV_via_anti-symplectic_involution}. 
    On the other hand, we construct in \autoref{example:PEV_from_Catanese}(b) a crepant resolution of a primitive (but not irreducible) Enriques variety by an Enriques manifold.
    Finally, in view of \autoref{rem:Q-fact_very_sing}, \autoref{example:PEV_from_CCCF} demonstrates that $\Q$-factorial terminal modifications (e.g., crepant resolutions) of primitive Enriques varieties are not primitive Enriques varieties in general, cf.\ \cite[Proposition 11]{Schwald20}.

    \medskip
    
    Recall now that if $Y$ is an Enriques manifold, then $q(Y) = 0$ and $\Pic(Y)$ is finitely generated by \cite[Propositions 2.6 and 2.8]{OS11a}, respectively. These properties remain valid also for primitive Enriques varieties by \autoref{prop:Hodge_duality} and \autoref{rem:Galois_and_quasi-etale}(iv),
    since they have rational singularities by \cite[Theorem 5.22]{KM98}. In the next result we establish further basic properties of primitive Enriques varieties, describing in particular partially their algebra of global reflexive forms, cf.\ \autoref{prop:algebra_reflexive_forms_IEV}.
    
    \begin{prop}
        \label{prop:basic_prop_PEV}
        Let $Y \simeq X / G$ be a primitive Enriques variety of dimension $2n$, where $X$ is a primitive symplectic variety and $G \subseteq \Aut(X)$ is generated by a purely non-symplectic automorphism $g \in \Aut(X)$ of finite order $d \geq 2$ and acts freely in codimension one on $X$. The following statements hold:
        \begin{enumerate}[\normalfont (i)]
            \itemsep 2pt

            \item
            $
                H^0 \big(Y,\Omega_Y^{[1]} \big) = H^0 \big(Y,\Omega_Y^{[2]} \big) = H^0 \big(Y,\Omega_Y^{[2n - 1]} \big) = \{ 0 \} .
            $
            
            \item
            $h^0 \big(Y, \Omega^{[2n-2]}_Y \big) = 1 \iff d \mid n-1$,

            \item
            $h^0 \big(Y, \Omega^{[2n]}_Y \big) = 1 \iff d \mid n$. In this case, $K_Y \sim 0$ and $Y$ has canonical singularities, but it is not smooth.
            
            \item 
            $H^0(Y,\mathscr{T}_Y) = \{ 0 \}$, where $\mathscr{T}_Y$ denotes the tangent sheaf of $Y$.
            
            \item 
            $\Aut(Y)$ is a discrete group.
        \end{enumerate}
    \end{prop}
    
    \begin{proof}~

        \medskip

        \noindent (i) 
        Denote by $\sigma \in H^0 \big( X, \Omega^{[2]}_X \big)$ the (unique up to scalar) symplectic form on $X$ and recall that $g^{[*]} \sigma = \zeta_d \, \sigma$, where $\zeta_d$ is a primitive $d$-th root of unity.
        This observation, together with \autoref{prop:basic_prop_PSV}(ii) and \autoref{rem:Galois_and_quasi-etale}(iv) for $k \in \{1, 2, 2n-1 \}$, yield part (i). 

        \medskip

        \noindent (ii)
        Omitted, as it is similar to the proof of part (iii), which is given below; see also the proof of \autoref{prop:algebra_reflexive_forms_IEV}.

        \medskip

        \noindent (iii)
        We first deal with the first part of the statement, keeping the same notation as in part (i). Specifically, if $d \mid n$, then 
		\[ 
			g^{[*]} \, \sigma^n = \zeta_d^n \, \sigma^n = \sigma^n ,
		\]
		which implies that the (unique up to scalar) volume form $\sigma^n$ on $X$ is $G$-invariant, and hence descends to the quotient $Y \simeq X / G$ by \autoref{rem:Galois_and_quasi-etale}(iv). Therefore, $h^0 \big(Y, \Omega^{[2n]}_Y \big) = 1$. The converse follows similarly using \autoref{rem:Galois_and_quasi-etale}(iv) for $p=2n$.
		
		We now treat the second part of the statement, so we assume that $h^0 \big(Y, \Omega^{[2n]}_Y \big) = 1$. As $K_Y \sim_\Q 0$ by construction and $K_Y \sim G \geq 0$ by assumption, we conclude that $K_Y \sim 0$. Observe now that $Y$ cannot be smooth due to \autoref{lem:smooth_PEV} and \cite[Proposition 2.8]{OS11a}. We next claim that $Y$ has canonical singularities. Indeed, if $\rho \colon W \to Y$ is a resolution of singularities of $Y$, then we may write
		\[ 
			K_W \sim \rho^* K_Y + \sum a_i E_i \sim \sum a_i E_i , 
		\]
		where the $E_i$ are $\rho$-exceptional prime divisors on $W$ and the coefficients satisfy $a_i > -1$, since $Y$ has klt singularities by construction. But since $K_W$ is a Cartier $\Z$-divisor on $W$, the same must be true for $\sum a_i E_i$, which implies that $a_i \geq 0$ for all $i$. This proves the above claim, and completes the proof of part (iii).

        \medskip

        \noindent (iv) 
        Follows from \cite[Lemma 4.6]{BL22} and \cite[Lemma 5.3]{Graf18}.

        \medskip

        \noindent (v)
        Follows from part (iv) and \cite[Lemma 3.4]{MO67}.
    \end{proof}

    Next, we briefly turn out attention to primitive Enriques surfaces. We note in passing that the paper \cite{GPP24} provides a complete classification of primitive symplectic surfaces, so it should also be possible to classify primitive Enriques surfaces. The following remark contains several observations towards this direction.

    \begin{rem}
        Let $Y \simeq X / G$ be a primitive Enriques surface,
        where $X$ is a primitive symplectic surface and $G \subseteq \Aut(X)$ is generated by a purely non-symplectic automorphism of finite order $d \geq 2$ and acts freely in codimension one on $X$; in particular, $X^\mathrm{nf}$ is a finite subset of $X$. By Propositions \ref{prop:Hodge_duality} and \ref{prop:basic_prop_PEV} we obtain 
        \[ 
            q(Y) = 0 \quad \text{ and } \quad h^0(Y, K_Y) = 0 . 
        \]
        The latter shows that $Y$ has strictly torsion canonical divisor $K_Y$ whose order is equal to $d \geq 2$ by \cite[Proposition 5.4]{BCS24}. As explained previously, $Y$ has klt, hence rational, singularities. So we now consider the following two cases:

        \medskip

        \emph{Case A}:
        If $Y$ is smooth, then it is an \emph{Enriques surface} by \autoref{lem:smooth_PEV}. In particular, $\pi_1(Y) \simeq \Z / 2\Z$.

        \medskip

        \emph{Case B}:
        If $Y$ is singular, then taking also \cite[Proposition 4.18, Theorem 4.20 and Remark 4.21(1)]{KM98} into account, we infer that $Y$ is a \emph{logarithmic Enriques surface} in the sense of \cite{Zhang91,Zhang93}. 
        A detailed description of these surfaces via their canonical coverings and the minimal resolutions thereof is given in op.\ cit. 
        We add below some comments for \emph{singular} primitive Enriques surfaces and we also distinguish two subcases to elaborate on their minimal resolution.
        
        The singular Beauville--Bogomolov decomposition theorem \cite[Theorem 1.5]{HoerPet19} yields a quasi-\'etale cover $\widetilde{Y} \to Y$ of $Y$ such that $\widetilde{Y}$ is either an abelian surface or an irreducible symplectic surface. The latter is possible if and only if $Y$ is an irreducible Enriques surface. This follows immediately from \cite[Proposition F, (F.4)--(F.6)]{GGK19} (see also \autoref{prop:characterization_IEV}) and the fact \cite[Lemma 4.7]{GGK19} that the restricted holonomy is invariant under quasi-\'etale covers.

        \medskip

        \emph{Subcase B1}:
        If $Y$ is not smooth and has canonical singularities, then its minimal resolution $W \to Y$ is crepant by assumption and by \cite[Theorem 4-6-2]{Mat02}. It follows now from the Enriques--Kodaira classification of compact complex surfaces (see, for example, \cite[Theorem 1-7-1]{Mat02}) that $W$ is an \emph{Enriques surface}. Indeed, since $K_W \sim_\Q 0$ by construction, $W$ is a minimal smooth projective surface with $\kappa(W) = 0$, $q(W) = 0$ and $p_g(W) = h^0(W,K_W) = 0$. By \cite[Theorem 1.1]{Tak03} we also obtain
        \[ 
            \pi_1(Y) \simeq \pi_1(W) \simeq \Z / 2\Z .
        \]

        \emph{Subcase B2}:
        If $Y$ has klt but not canonical singularities, then it is uniruled, and if we consider its minimal resolution $V \to Y$, then the smooth projective surface $V$ is also uniruled, so $\kappa(V) = - \infty$ by \cite[Corollary 4.12]{Debarre01book}. Hence, $V$ is a rational surface with $q(V) = 0$ and $p_g(V) = h^0(V,K_V) = 0$ by construction and by \cite[Theorem 4-6-2]{Mat02}. It follows that $Y$ itself a singular rational surface, which is simply connected by \cite[Theorem 1.1]{Tak03} and by the birational invariance of the fundamental group for complex manifolds; that is,
        \[ 
            \pi_1(Y) \simeq \pi_1(V) \simeq \pi_1(\mathbb{P}^2) \simeq \{ 1 \} .
        \]
    \end{rem}

    The following slight generalization of the construction presented in \autoref{example:IEV_via_anti-symplectic_involution} yields a possible way to construct some primitive Enriques varieties in higher (even) dimensions.

    \begin{exa}
        \label{example:PEV_via_prime_order_non-symplectic}
        Let $X$ be a primitive symplectic variety of dimension $2n \geq 4$. If $X$ carries a non-symplectic automorphism $g \in \Aut(X)$ of prime order $p$, then the quotient $Y \coloneqq X/ \langle g \rangle$ is a primitive Enriques variety by \autoref{lem:prime_order_non-symplectic}, and by \autoref{prop:basic_prop_PEV}(iii) we have $K_Y \sim 0$ if and only if $p \mid n$. When $p \nmid n$, using \cite[Proposition 5.20(1)]{KM98} we see that the quotient map $X \twoheadrightarrow Y$ is the index-one cover of $Y$.
    \end{exa}
    
    We discuss now the fundamental group of primitive Enriques varieties.
    
    \begin{rem}
        \label{rem:fundamental_group_PEV}
        Let $Y$ be a primitive Enriques variety and consider a Galois quasi-\'etale cover $\gamma \colon X \to Y$ of $Y$ such that $Y\simeq X/G$, where $X$ is a primitive symplectic variety and $G = \langle g \rangle$ is generated by a purely non-symplectic automorphism $g \in \Aut(X)$ of finite order. Recall that $\gamma$ is \'etale if and only if $X^\text{nf} = \emptyset$.
        \begin{enumerate}[(i)]
            \item Assume that $X^\text{nf} = \emptyset$. If $\pi_1(X) = \{ 1 \}$, then $\pi_1(Y) \simeq \pi_1(X/G) \simeq G$.
            
            \item Assume that $X^\text{nf} \neq \emptyset$. If $g$ is of prime order, then $\Fix(g) = X^\text{nf} \neq \emptyset$, so the natural homomorphism $\pi_1(X) \twoheadrightarrow \pi_1(Y)$ is surjective by \cite[Lemma 1.2]{Fujiki83b}. In particular, if $X$ is simply connected (e.g., if it is smooth or if it admits a symplectic resolution), then $Y$ is also simply connected.
        \end{enumerate}
    \end{rem}
    
    According to \cite[Proposition 2.10]{OS11a}, birational Enriques manifolds have isomorphic fundamental groups.
    It is straightforward to check using \cite[Theorem 1.1]{Tak03} that this remains true for birational primitive Enriques varieties as well.

    The following result was inspired by \cite[Proposition 5.11]{BCS24}, and since its proof is essentially the same, we omit it and refer to op.\ cit.\ for the details.
    
    \begin{lem}
        Let $Y$ be a primitive Enriques variety such that $\pi_1(Y) \neq \{ 1 \}$. Assume that the universal covering $X$ of $Y$ is a primitive symplectic variety. If $\pi_1(Y)$ acts non-symplectically on $X$, then $\pi_1(Y)$ is a finite cyclic group.
    \end{lem}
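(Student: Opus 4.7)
The plan is to reduce the statement to \autoref{lem:group_acting_nonsymplectically_cyclic} in two quick steps: first show that $\pi_1(Y)$ is finite, and then verify that it embeds into $\Aut(X)$ as a subgroup acting non-symplectically, at which point that lemma applies directly.

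For finiteness, I would invoke compactness. By \autoref{dfn:PSV}, a primitive symplectic variety is projective, so $X$ is projective and hence compact as a complex analytic space. Since $Y$ is also compact (being a primitive Enriques variety, hence projective) and $X \to Y$ is the universal topological covering, this covering map has finite degree, and that degree equals $|\pi_1(Y)|$. Thus $\pi_1(Y)$ is finite.

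For the embedding $\pi_1(Y) \hookrightarrow \Aut(X)$, the fundamental group acts freely on $X$ by deck transformations, which are biholomorphisms; since $X$ is projective, GAGA promotes them to algebraic automorphisms. This identifies $\pi_1(Y)$ with a finite subgroup of $\Aut(X)$, and by hypothesis this subgroup acts non-symplectically on the primitive symplectic variety $X$. An application of \autoref{lem:group_acting_nonsymplectically_cyclic} then immediately gives that $\pi_1(Y)$ is a cyclic group generated by a purely non-symplectic automorphism of $X$, completing the proof.

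I do not anticipate any substantive obstacle here: the argument is essentially formal once the hypotheses are unpacked, which is presumably why the authors chose to omit the details and refer to \cite[Proposition 5.11]{BCS24}. The only mildly delicate point is to confirm that the topological universal cover of the (possibly singular) normal projective variety $Y$ coincides with the algebraic, projective, primitive symplectic variety $X$ appearing in the statement, together with its natural $\pi_1(Y)$-action by algebraic automorphisms; but this is precisely what is built into the formulation of the lemma.
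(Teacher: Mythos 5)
Your proof is correct and takes essentially the route the paper intends: establish finiteness of $\pi_1(Y)$ from compactness of the covering, identify the deck group with a finite subgroup of $\Aut(X)$ acting non-symplectically, and conclude via the character argument encapsulated in \autoref{lem:group_acting_nonsymplectically_cyclic}. The paper omits its own proof and defers to \cite[Proposition 5.11]{BCS24}, which is essentially this same argument, so no further comparison is needed.
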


    
    We conclude this subsection by examining the behavior of primitive Enriques varieties under birational morphisms. In contrast to the subclass of irreducible Enriques varieties which is not preserved under birational contractions, as \autoref{example:PEV_from_Catanese}(b) demonstrates, 
    the class of primitive Enriques varieties is well-behaved. Specifically:
    
    \begin{prop}
        \label{prop:PEV_closed_under_bir_contraction}
        If $\theta \colon Y\to Y'$ is a birational morphism from a primitive Enriques variety $Y$ to a normal projective variety $Y'$, then $Y'$ is also a primitive Enriques variety.
    \end{prop}

    \begin{proof}
        Since $Y$ is a primitive Enriques variety, there exists a Galois quasi-\'etale cover $\gamma \colon X \to Y$ of $Y \simeq X / G$, where $X$ is a primitive symplectic variety and $G \neq \{ \Id_X \}$ is a finite cyclic subgroup of $\Aut(X)$ acting non-symplectically and freely in codimension one on $X$. Considering the Stein factorization of the composite map $f' \coloneqq \theta \circ \gamma$, we obtain the following commutative diagram:
        \begin{center}
            \begin{tikzcd}[row sep = large, column sep = large]
                X \arrow[r, "f"] \arrow[d, "\gamma", swap]  \arrow[dr,"f'"]& X' \arrow[d, "\gamma'"] \\
                Y \arrow[r, "\theta" swap] & Y' ,
            \end{tikzcd}
        \end{center}
        where $ f \colon X \to X'$ is a projective birational morphism, $\gamma' \colon X' \to Y'$ is a finite surjective morphism, and $X' = \Spec_{Y'} ( f'_* \OO_X )$ by \cite[Proposition 37.53.4(5)]{Stacks}. Then $X'$ is a primitive symplectic variety by \autoref{lem:symplectic_under_maps}(ii) and \autoref{prop:dom_rational_map_from_PSV}, and $\gamma'$ is a quasi-\'etale cover of $Y'$ by \autoref{rem:contraction_from_K-trivial} and \cite[Paragraph 1.41]{Debarre01book}.
        Since $f' \colon X \to Y'$ is $G$-invariant by construction,
        working locally one can check that there exists an induced $G$-action on $\Aut_{\OO_{Y'}} ( f'_* \OO_X )$, and hence an induced $G$-action on $X' = \Spec_{Y'} \big( f'_* \OO_X \big)$ such that $f \colon X \to X'$ is $G$-equivariant and $\OO_{Y'}\simeq (\gamma'_* \OO_{X'})^G$, 
        which implies that $Y' \simeq X'/G$. 
        Now, the same argument as in the proof of \autoref{lem:G-equivariant_MMP_step_symplectic}(iii) 
        shows that the $G$-action on $X'$ is non-symplectic, while arguing as in \autoref{subsection:MMP_step}
        we infer that the $G$-action on $X'$ is free in codimension one. In conclusion, $Y' \simeq X'/G$ is a primitive Enriques variety, as desired.
    \end{proof}

    \subsection{MMP for Enriques pairs}
    \label{subsection:MMP_Enriques}

    We first verify that the class of primitive Enriques varieties is preserved under the operations of the MMP, 
    which is also the case for the class of primitive symplectic varieties.
    
    \begin{lem}
        \label{lem:MMP_step_primitive_Enriques}
        Let $(Y,B_Y)$ be a log canonical pair such that $Y$ is $\Q$-factorial. Assume that $K_Y + B_Y$ is not nef and consider a step of a $(K_Y + B_Y)$-MMP:
        \begin{center}
            \begin{tikzcd}
                (Y, B_Y) \arrow[dr, "\theta" swap] \arrow[rr, "\psi", dashed] && (Y^+, B_Y^+) \arrow[dl, "\theta^+"] \\
                & W
            \end{tikzcd}
        \end{center}
        where $B_Y^+ = \psi_* B_Y$. If $Y$ is a primitive Enriques variety, then both $W$ and $Y^+$ are also primitive Enriques varieties, and $Y^+$ is $\Q$-factorial as well.
    \end{lem}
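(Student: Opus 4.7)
The plan is to lift the given $(K_Y + B_Y)$-MMP step to a $G$-equivariant MMP step on a primitive symplectic cover of $Y$ and then to descend the conclusion by quotienting. Since $Y$ is a primitive Enriques variety, by \autoref{dfn:PEV} we may write $Y \simeq X/G$, where $X$ is a primitive symplectic variety and $G \subseteq \Aut(X)$ is a finite subgroup acting non-symplectically and freely in codimension one on $X$. Denote by $\gamma \colon X \to Y$ the corresponding quasi-\'etale Galois cover and set $B \coloneqq \gamma^* B_Y$. Since $X$ is symplectic we have $K_X \sim 0$, hence by \autoref{rem:singularities_quasi-etale_cover}(i) the divisor $K_Y$ is torsion; in particular, both canonical divisors are numerically trivial. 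Moreover, by \autoref{rem:singularities_quasi-etale_cover}(ii), $(X,B)$ is a log canonical pair with $K_X + B = \gamma^*(K_Y + B_Y)$.

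The next step is to apply \autoref{lem:lifting_MMP_step} to $\gamma$ and to the given $(K_Y + B_Y)$-MMP step (the hypothesis $K_X \equiv 0$ and $K_Y \equiv 0$ is exactly what we verified above). This produces a step $\varphi' \colon X \dashrightarrow X^+$ of a $G$-equivariant $(K_X + B)$-MMP, with contraction $\eta' \colon X \to Z'$ and opposite contraction $(\eta')^+ \colon X^+ \to Z'$, together with quasi-\'etale Galois covers $\mu \colon Z' \to W$ and $\gamma^+ \colon X^+ \to Y^+$ defined by $G$, and such that $(\varphi')_* B$ is the pullback of $B_Y^+$ under $\gamma^+$. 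In particular, the step $\varphi'$ is a $G$-equivariant divisorial contraction or a $G$-equivariant flip according to whether $\psi$ is a divisorial contraction or a flip.

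Finally, I would invoke \autoref{lem:G-equivariant_MMP_step_symplectic}(ii) applied to $(X,B)$, which implies that both $Z'$ and $X^+$ are primitive symplectic varieties. Moreover, \autoref{lem:G-equivariant_MMP_step_symplectic}(iii) ensures that $G$ continues to act non-symplectically and freely in codimension one on both of them. It then follows immediately from \autoref{dfn:PEV} that $W \simeq Z'/G$ and $Y^+ \simeq X^+/G$ are primitive Enriques varieties, as desired.

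No serious obstacle arises: the argument is essentially an assembly of the lifting construction \autoref{lem:lifting_MMP_step} with the stability statement \autoref{lem:G-equivariant_MMP_step_symplectic}, and the only point to verify by hand is that the numerical triviality hypothesis required to invoke the lifting lemma is automatic for a primitive Enriques variety, which we noted above from the torsion of $K_Y$.
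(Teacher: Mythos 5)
Your proposal is correct and follows essentially the same route as the paper's own proof: write $Y \simeq X/G$, lift the MMP step via \autoref{lem:lifting_MMP_step}, and conclude with \autoref{lem:G-equivariant_MMP_step_symplectic}(ii)--(iii). Your explicit verification that $K_X \equiv 0$ and $K_Y \equiv 0$ (needed to invoke the lifting lemma) is a point the paper leaves implicit, but otherwise the arguments coincide.
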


    \begin{proof}
        By assumption there exist a primitive symplectic variety $X$ and a finite subgroup $G$ of $\Aut(X)$ acting non-symplectically and freely in codimension one on $X$ such that $Y \simeq X / G$ and $X$ is $G\Q$-factorial; in particular, the quotient map $\gamma \colon X \to Y$ is a Galois quasi-\'etale cover of $Y$. By \autoref{lem:lifting_MMP_step} there exists a commutative diagram:
        \begin{center}
            \begin{tikzcd}
                (X, B) \arrow[dd, "\gamma" swap] \arrow[rr, "\varphi", dashed] \arrow[dr, "\eta" swap] && (X^+, B^+) \arrow[dl, "\eta^+"] \arrow[dd, "\gamma^+"] \\
                & Z \arrow[dd, "\mu", pos=0.3, swap] \\
                (Y,B_Y) \arrow[rr, "\psi", pos=0.65, dashed, crossing over] \arrow[dr, "\theta" swap] && (Y^+,B_Y^+) \arrow[dl, "\theta^+"] \\
                & W ,
            \end{tikzcd}
        \end{center}
        where 
        \begin{itemize}
            \item $\varphi \colon (X,B) \dashrightarrow (X^+, B^+)$ is a step of a $G$-equivariant $(K_X+B)$-MMP,

            \item $\gamma^+ \colon X^+ \to Y^+ \simeq X^+ / G$ is a Galois quasi-\'etale cover between normal projective varieties with numerically trivial canonical divisor,

            \item $\mu \colon Z \to W \simeq Z / G$ is a Galois quasi-\'etale cover between normal projective varieties,
            
            \item $B = \gamma^* B_Y$ and $B^+ = \varphi_* B = (\gamma^+)^* B_Y^+$.
        \end{itemize}
        In addition, by \autoref{lem:G-equivariant_MMP_step_symplectic} we know that both $Z$ and $X^+$ are primitive symplectic varieties and also that $G$ acts non-symplectically and freely in codimension one on them. The statement now follows, since $W \simeq Z / G$ and $Y^+ \simeq X^+ / G$, taking also the discussion about $(G)\Q$-factoriality from \autoref{subsection:MMP_step} into account.
    \end{proof}

    We next address the termination problem for Enriques pairs. Let $Y_1$ be an Enriques manifold and let $B_{Y_1}$ be an effective $\R$-divisor on $Y_1$ such that $(Y_1,B_{Y_1})$ is a log canonical pair. Denote by $\gamma_1 \colon X_1 \to Y_1$ the universal covering of the Enriques manifold $Y_1$, where $X_1$ is a projective IHS manifold, and set $B_1 \coloneqq (\gamma_1)^* B_{Y_1}$, $(X_1',B_1') \coloneqq (X_1, B_1)$ and $f_1 \coloneqq \Id_{X_1}$. Running any $(K_{Y_1}+B_{Y_1})$-MMP and invoking Propositions \ref{prop:lifting_MMP} and \ref{prop:G-equivariant_termination}, we obtain the following commutative diagram:
    \begin{equation}
        \label{diagram:Enriques_MMP_lifting}
        \begin{tikzcd}[column sep = large, row sep = large, /tikz/column 1/.append style={column sep = 0pt, inner xsep = 0pt},]
        	(X_1, B_1) \hspace{1pt} = 
            & (X_1',B_1') \arrow[d, "\Id_{X_1} \; = \; f_1" swap] \arrow[r, dashed, "\rho_1"] & (X_2',B_2') \arrow[d, "f_2" swap] \arrow[r, dashed, "\rho_2"] & (X_3',B_3') \arrow[d, "f_3" swap] \arrow[r, dashed, "\rho_3"] & \dots 
        	\\
        	& (X_1,B_1) \arrow[d, "\gamma_1" swap] \arrow[r, dashed, "\varphi_1"] & (X_2,B_2) \arrow[d, "\gamma_2" swap] \arrow[r, dashed, "\varphi_2"] & (X_3,B_3) \arrow[d, "\gamma_3" swap] \arrow[r, dashed, "\varphi_3"] & \dots 
        	\\
        	& (Y_1, B_{Y_1}) \arrow[r, dashed, "\psi_1"] & (Y_2, B_{Y_2}) \arrow[r, dashed, "\psi_2"] & (Y_3, B_{Y_3}) \arrow[r, dashed, "\psi_3"] & \dots
        \end{tikzcd}
    \end{equation}
    where 
    \begin{itemize}
        \item the bottom row is the given $(K_{Y_1}+B_{Y_1})$-MMP of $\Q$-factorial primitive Enriques varieties with canonical singularities,
            
        \item the middle row is a $\pi_1(Y)$-equivariant $(K_{X_1} + B_1)$-MMP of $G\Q$-factorial primitive symplective varieties,

        \item the top row is a $(K_{X_1'} + B_1')$-MMP of $\Q$-factorial primitive symplectic varieties,
    \end{itemize}
    and for each $i \geq 1$,
    \begin{itemize}
        \item $\gamma_i \colon (X_i,B_i) \to (Y_i,B_{Y_i})$ is a Galois quasi-\'etale cover of $Y_i$ of degree $|\pi_1(Y_1)|$ such that 
        $ K_{X_i} + B_i \sim_\R (\gamma_i)^* (K_{Y_i} + B_{Y_i}) $,

        \item $f_i \colon (X_i',B_i') \to (X_i,B_i)$ is a projective birational morphism such that 
        $ K_{X_i'} + B_i' \sim_\R (f_i)^* (K_{X_i} + B_i) $.

            
            
    \end{itemize} 
    Bearing the above in mind, we are now ready to prove our main result, \autoref{thm:termination_MMP_Enriques}.
    
    \begin{proof}[\textbf{Proof of \autoref{thm:termination_MMP_Enriques}}]
        Run any $(K_Y+B_Y)$-MMP: 
        \begin{center}
            \begin{tikzcd}
                (Y,B_Y) \coloneqq (Y_1,B_{Y_1}) \arrow[r, dashed] & (Y_2,B_{Y_2}) \arrow[r, dashed] & (Y_3,B_{Y_3}) \arrow[r, dashed] & \dots 
            \end{tikzcd}
        \end{center}
        Consider the universal covering $\pi \colon X \to Y$ of the Enriques manifold $Y \simeq X / \pi_1(Y)$. 
        Set $B \coloneqq \pi^* B_Y$ and observe that $(X,B)$ is a log canonical $\pi_1(Y)$-pair by \autoref{rem:singularities_quasi-etale_cover}(ii). By \autoref{prop:lifting_MMP} the given $(K_Y + B_Y)$-MMP can be lifted to a $\pi_1(Y)$-equivariant $(K_X+B)$-MMP:
        \begin{center}
            \begin{tikzcd}
                (X,B) \coloneqq (X_1,B_1) 
                \arrow[r, dashed] & (X_2,B_2) 
                \arrow[r, dashed] & (X_3,B_3) 
                \arrow[r, dashed] & \dots 
            \end{tikzcd}
        \end{center}
        Since the above MMP terminates by \autoref{prop:IHS_G-equivariant_termination}, the given $(K_Y+B_Y)$-MMP also terminates with a minimal model $(Y',B_{Y'})$ of $(Y,B_Y)$. 
        To complete the proof, it only remains to show that the pair $(Y',B_{Y'})$ satisfies the claimed properties, but they follow immediately from \autoref{lem:MMP_step_primitive_Enriques}, see also \autoref{subsection:MMP_step}, so we are done.
    \end{proof}

    
    In view of \autoref{lem:MMP_step_symplectic}(iii), we would finally like to investigate the following question in the remainder of this subsection.
    
    \begin{question}
        \label{question:flop_of_Enriques_manifold}
        Let $(Y,B_Y)$ be a log canonical pair such that $Y$ is an Enriques manifold. Suppose that $\psi \colon Y \dashrightarrow Y^+$ is the $B_Y$-flop of $Y$. It is true that the primitive Enriques variety $Y^+$ is also smooth, and thus an Enriques manifold?
    \end{question}
   
    Let $X$ be a projective IHS manifold of dimension $\dim X = 2n \geq 4$ and let $P$ be a closed submanifold of $X$ such that $P \simeq \mathbb{P}^n$. An \emph{elementary Mukai flop} of $X$ is a Mukai flop $X\dashrightarrow X'$ along $P$. In the next paragraph we recall its construction from \cite[Example 21.7]{GHJ03book}. 
    
    Let $Z \to X$ be the blow-up of $X$ along $P$ and let $E \subset Z$ be the exceptional divisor. Note that the symplectic form on $X$ induces an isomorphism $\mathcal{N}_{P/X} \to \Omega_P$. The projection $E \to P$ is isomorphic to the projective bundle $\mathbb{P} \big( \mathcal{N}_{P/X} \big) \simeq \mathbb{P} (\Omega_P) \to P $, which is in turn canonically isomorphic to the incidence variety 
    $ \big\{(x,H) \mid x\in H \big\} \subseteq P \times P^\vee$ (as a projective bundle over $P$), where $P^\vee$ is the dual of $P$. In particular, there exists the other projection $E \to P^\vee$. Now, since the incidence variety $E\subseteq P\times P^\vee$ is a hypersurface with normal bundle $\OO_P (1)\boxtimes\OO_{P^\vee} (1)$, it follows from the adjunction formula that the restriction of 
    $\OO_E(E)$ to every fiber of $E\to P^\vee$ is isomorphic to $\OO_{\P^{n-1}} (-1)$. We can now apply the Nakano-Fujiki criterion to conclude that there exists a blow-down $Z\to X'$ to a complex manifold $X'$ such that $E \subset Z$ is the exceptional divisor and $E \to X'$ is the projection $E\to P^\vee \subset X'$. Finally, note that $X'$ is holomorphic symplectic with $h^0 (X', \Omega_{X'}^2) = 1$ and $\pi_1(X') = \{ 1 \}$, since $X$ has the same properties, but $X'$ need not even be K\"ahler.
    In fact, $X'$ is an IHS manifold if and only if it is K\"ahler.
    
    Let $Y$ be an Enriques manifold of dimension $2n \geq 4$ and let $Q$ be a closed submanifold of $Y$ such that $Q \simeq \P^n$. We can similarly define an \emph{elementary Mukai flop} of $Y$ to be a Mukai flop $Y \dashrightarrow Y'$ along $Q$ as constructed in \cite[Section 4, p.\ 1645-1646]{OS11b}; see also the third paragraph of the proof of \autoref{prop:Mukai_flop_EM} below.
    
    In any of the above two cases we say that an elementary Mukai flop is \emph{algebraic} if it is the flop of a small contraction associated to an extremal ray of the Mori cone. We may now provide the following partial answer to \autoref{question:flop_of_Enriques_manifold}, cf.\ \cite[Proposition 4.2]{OS11b}.
    
    \begin{prop}
        \label{prop:Mukai_flop_EM}
        Let $Y$ be an Enriques manifold of dimension $2n$ and let $\gamma \colon X\to Y$ be the universal covering of $Y$. 
        If $\psi \colon Y \dashrightarrow Y^+$ is an algebraic elementary Mukai flop, then $Y^+$ is an Enriques manifold. 
        Moreover, the lifted $\pi_1(Y)$-equivariant flop $\varphi \colon X \dashrightarrow X^+$ as in \autoref{lem:lifting_MMP_step} is a composition of algebraic elementary Mukai flops.
    \end{prop}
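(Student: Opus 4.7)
The plan is to apply \autoref{lem:lifting_MMP_step} and then analyze the structure of the lifted flop over the disjoint preimages of the flopping locus, using simple connectedness of $\P^n$ as the key input.

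Write the algebraic elementary Mukai flop $\psi$ as the flop of a small extremal contraction $\theta \colon Y \to W$ that contracts a submanifold $Q \simeq \P^n$ of $Y$ to a point, with $\mathcal{N}_{Q/Y} \simeq \Omega_Q$. By \autoref{lem:lifting_MMP_step}, there is a $G$-equivariant small projective contraction $\eta \colon X \to Z$ and a $G$-equivariant flop $\varphi \colon X \dashrightarrow X^+$ of $\eta$, with $G \coloneqq \pi_1(Y)$. Since $\gamma$ is \'etale of degree $d \coloneqq |G|$ and $Q \simeq \P^n$ is simply connected, the restriction $\gamma |_{\gamma^{-1}(Q)}$ is a trivial \'etale cover, so
\[
    \gamma^{-1}(Q) = P_1 \sqcup \cdots \sqcup P_d ,
\]
with each component $P_i$ mapping isomorphically onto $Q$, hence $P_i \simeq \P^n$. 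The group $G$ permutes the $P_i$ transitively; the stabilizer of any $P_i$ acts freely on $P_i$ with quotient $Q \simeq \P^n$, and by simple connectedness it must be trivial. Thus $G$ permutes the $P_i$ freely and transitively, and pulling back the isomorphism $\mathcal{N}_{Q/Y} \simeq \Omega_Q$ along $\gamma |_{P_i}$ yields $\mathcal{N}_{P_i / X} \simeq \Omega_{P_i}$ for each $i$.

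The exceptional locus of $\eta$ is then the disjoint union $P_1 \sqcup \cdots \sqcup P_d$, contracted to $d$ distinct points forming a single $G$-orbit in $Z$. Because the $P_i$ are pairwise disjoint and each satisfies the local condition for an elementary Mukai flop, we realize $\varphi$ as a sequence
\[
    X = X_0 \dashrightarrow X_1 \dashrightarrow \cdots \dashrightarrow X_d = X^+ ,
\]
where $X_{i-1} \dashrightarrow X_i$ is the elementary Mukai flop along the proper transform of $P_i$. Each $X_i$ is smooth by the construction of the Mukai flop recalled before the statement, and each step is algebraic because the ambient $\varphi$ is projective (being the lifted flop of the algebraic $\psi$). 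By \autoref{lem:G-equivariant_MMP_step_symplectic}(ii), $X^+$ is a primitive symplectic variety; combined with smoothness and \autoref{rem:irreducible_vs_primitive_symplectic}(i), this means that $X^+$ is a projective IHS manifold.

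It remains to show that $G$ acts freely on $X^+$, which together with $\pi_1(X^+) = \{1\}$ implies that $Y^+ \simeq X^+ / G$ is an Enriques manifold. The $G$-equivariant $\varphi$ restricts to a $G$-equivariant isomorphism between $X \setminus \bigsqcup_i P_i$ and $X^+ \setminus \bigsqcup_i P_i'$, where $P_1', \dots, P_d'$ are the newly introduced projective spaces in $X^+$. On this complement $G$ acts freely because it does so on $X$. On each $P_i'$, the $G$-equivariance of $\varphi$ transports the free and transitive $G$-action on $\{P_1,\dots,P_d\}$ to $\{P_1',\dots,P_d'\}$; in particular the setwise stabilizer of each $P_i'$ in $G$ is trivial, so the pointwise stabilizer in $G$ of any point of $P_i'$ is also trivial. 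The expected main obstacle is the very decomposition of $\varphi$ into a sequence of \emph{algebraic} elementary Mukai flops: if the classes of lines in the various $P_i$ are numerically equivalent in $N_1(X)$, then the $G$-invariant face of $\NEb(X)$ contracted by $\eta$ collapses to a single extremal ray and no intermediate algebraic model exists in the strict MMP sense, so one has to argue by the local nature of the Mukai flop around each disjoint $P_i$, controlled by the normal bundle condition established above, and then stitch the local flops into the globally algebraic map $\varphi$.
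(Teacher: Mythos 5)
Your overall strategy matches the paper's: decompose $\gamma^{-1}(Q)$ into $d$ disjoint copies of $\P^n$ via simple connectedness of $\P^n$, observe that $G = \pi_1(Y)$ permutes them freely, and realize the lifted flop as a composition of elementary Mukai flops along these copies. However, there is a genuine gap precisely at the step you yourself flag as ``the expected main obstacle,'' and your closing sentence does not resolve it. Two things are missing. First, you never identify the MMP-theoretic flop $\varphi \colon X \dashrightarrow X^+$ produced by \autoref{lem:lifting_MMP_step} (which is defined abstractly as the flip of the $G$-equivariant small contraction $\eta \colon X \to Z$) with the geometric blow-up/blow-down construction along $\bigsqcup_i P_i$. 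Your deduction that $X^+$ is smooth, and your description of the exceptional locus of $\varphi^{-1}$ as a disjoint union of dual projective spaces $P_i'$ permuted freely by $G$, both presuppose this identification; applying \autoref{lem:G-equivariant_MMP_step_symplectic}(ii) only gives that $X^+$ is a primitive symplectic variety, not that it is smooth. The paper closes this by first constructing the geometric flop $\rho \colon X \dashrightarrow X'$ explicitly following \cite[Section 4]{OS11b}, then producing $\eta' \colon X' \to Z$ via the Rigidity lemma, and finally invoking the uniqueness of flips \cite[Lemma 5.3]{Pro21} after checking that the strict transforms of a suitable klt boundary are ample over $Z$ on both $X^+$ and $X'$.

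Second, your claim that ``each step is algebraic because the ambient $\varphi$ is projective'' is not a valid inference: projectivity of the composite says nothing about the intermediate birational models, and ``algebraic'' here means by definition that each step is the flop of a small contraction associated to an \emph{extremal ray of the Mori cone}. The paper addresses this head-on by arguing that each line class $[\ell_i] \subseteq P_i$ individually spans an extremal ray of $\NEb(X)$ (using that $[\ell]$ spans an extremal ray of $\NEb(Y)$ and that $\gamma$ is finite), so that the $d$ rays form a $G$-orbit generating the extremal face contracted by $\eta$, and $\rho$ is genuinely a composition of $d$ algebraic elementary Mukai flops. This is exactly the scenario your final sentence worries about (the classes of lines in the various $P_i$ possibly collapsing), and it is left open in your write-up rather than settled.
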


    \begin{proof}
        Set $G \coloneqq \pi_1(Y)$ and $d \coloneqq \deg(\gamma) = |G|$. By assumption the indeterminacy locus $Q$ of $\psi$ is isomorphic to $\mathbb{P}^n$. Then the preimage $P \coloneqq \gamma^{-1} (Q) = \bigcup_{i=1}^d P_i$ is $G$-invariant. 
        We claim that $P_i \simeq \P^n$ for any $i \in \{ 1, \dots, d \}$ and that their union $P$ is disjoint. To prove this claim, we first recall that the base change of an \'etale morphism is still \'etale. Hence, $P \to Q$ is an \'etale cover; in particular, $P$ is a topological covering space of $Q\simeq \P^n$. 
        Since $\P^n$ is simply connected, the only connected covering space of $\P^n$ is $\P^n$ itself. Picking now a connected component $V$ of $\gamma^{-1}(\P^n)$ and considering the restriction of $\gamma$ to $V$, we obtain a connected covering space $\gamma|_V \colon V \to \P^n$, and thus $V \simeq \P^n$. In conclusion, $P$ is a disjoint union $P = \bigsqcup_{i=1}^d P_i$ of $d$ projective $n$-spaces $P_i \simeq \P^n$, as claimed.

        Let $\ell$ be a line in $Q$. By assumption, its class $[\ell]$ spans the extremal ray of $\NEb(Y)$ that defines the flop:
        \begin{center}
            \begin{tikzcd}
                Y \arrow[rd, "\theta"'] \arrow[rr, "\psi", dashed] & & Y^+ \arrow[ld, "\theta^+"] \\
                & W .
            \end{tikzcd}
        \end{center}
        For each $i\in \{1,\dots,d\}$ let $\ell_i \subset X$ be a line in $P_i$. Since $[\ell]$ spans an extremal ray and $\gamma$ is a finite morphism, each class $[\ell_i]$ spans an extremal ray of $\NEb(X)$. 
        These $d$ extremal rays form a $G$-orbit and generate an extremal face $F$ of $\NEb(X)$, which is simultaneously an extremal ray of $\NEb(X)^G$.
    
        We now recall the construction of an elementary Mukai flop of an Enriques manifold from \cite[Section 4, p.\ 1645-1646]{OS11b}. Specifically, we have a commutative diagram:
        \begin{center}
            \begin{tikzcd}[row sep = 2em, column sep = 2em]
                & \widehat{X} \ar[ld,swap,"p"] \ar[rd,"q"] \\
                X \arrow[d, "\gamma" swap, pos=0.4] \arrow[rr, dashed, "\rho"] & & X' \arrow[d, "\gamma'" pos=0.4] \\
                 Y \arrow[rd, "\theta" swap] \arrow[rr, dashed, "\psi"] & & Y^+ \arrow[ld, "\theta^+"] \\
                & W
            \end{tikzcd}
        \end{center}
        where 
        \begin{itemize}
            \item $p$ is the blow-up of $X$ at the disjoint union $P = \bigsqcup_{i=1}^d P_i$ with exceptional divisor $E \coloneqq \bigsqcup_{i=1}^d E_i$, where each $E_i$ is isomorphic to the incidence variety 
            \[ 
                \big\{ (x_i,H_i)\mid x_i\in H_i \big\} \subseteq P_i\times P_i^\vee ,
            \]
            
            \item $q$ is the contraction of $E$ along the second projection,
            
            \item $\rho$ is the Mukai flop of $X$ along $P = \bigsqcup_{i=1}^d P_i$, and
            
            \item $\gamma' \colon X' \to Y^+$ is the universal covering of the Enriques manifold $Y^+$; see below for the justification.
        \end{itemize}
        To verify that $\gamma' \colon X'\to Y^+$ is the universal covering of $Y^+$, 
        we have to check that $X'$ is a projective IHS manifold and that there exists an induced $G$-action on $X'$ which is free. To this end, note first that $X'$ is projective, as $\rho \colon X \dashrightarrow X'$ is a composition of $d$ algebraic elementary Mukai flops corresponding to the $d$ extremal rays spanned by the classes $[\ell_i]$, where $i \in \{ 1, \dots, d \}$. Hence, $X'$ is indeed a projective IHS manifold. Now, since $P = \bigsqcup_{i=1}^d P_i$ is $G$-invariant, $\widehat{X}$ is a $G$-variety
        and the contraction $p \colon \widehat{X} \to X$ is $G$-equivariant,
        and since every automorphism of $\P^n$ has fixed points, we infer that the action of $G$ on $P = \bigsqcup_{i=1}^d P_i$ is given by permuting the factors $P_i$.
        Therefore, there exists a canonically induced free action on $P^\vee = \big( \bigsqcup_{i=1}^d P_i \big)^\vee = \bigsqcup_{i=1}^d P_i^\vee$, and thus the contraction $q \colon \widehat{X} \to X'$ is $G$-equivariant. 
        %
        Hence, $\rho \colon X \dashrightarrow X'$ is a $G$-equivariant map and the action of $G$ on $X'$ is free. 
        Consequently, $\gamma'\colon X'\to Y^+ \simeq X'/G$ is the universal covering of the Enriques manifold $Y^+$ by construction. 

        Next, we deal with the second part of the statement. Let $\eta \colon X \to Z$ be the small contraction of the extremal face $F \subseteq \NEb(X)$. By the Rigidity lemma \cite[Proposition 1.14]{Debarre01book}, the morphism $\eta\circ p\colon \widehat{X}\to Z$ factors through $q \colon \widehat{X} \to X'$ via $\eta' \colon X' \to Z$.
        We obtain thus the following commutative diagram, where every map is $G$-equivariant:
        \begin{center}
            \begin{tikzcd}
                & \widehat{X} \arrow[ld, "p" swap] \arrow[rd, "q"] \\
                X \arrow[rd, "\eta"'] \arrow[rr, "\rho", dashed] & & X' \arrow[ld, "\eta'"] \\
                & Z .
            \end{tikzcd}
        \end{center}
        On the other hand, since the flop $\psi$ corresponds to the extremal ray $\R_{\geq 0} [\ell] \subseteq \NEb(Y)$, by \autoref{lem:lifting_MMP_step} and its proof we also obtain the following commutative diagram:
        \begin{center}
            \begin{tikzcd}
                X \arrow[dd, "\gamma"'] \arrow[rd, "\eta"'] \arrow[rr, "\varphi", dashed] & & X^+ \arrow[dd, "\gamma^+"] \arrow[ld, "\eta^+"] \\
                & Z \arrow[dd, "\mu", pos=0.3, swap] \\
                Y \arrow[rd, "\theta"'] \arrow[rr, "\psi", pos=0.65, dashed, crossing over] & & Y^+ \arrow[ld, "\theta^+"] \\
                & W \simeq Z/G ,
            \end{tikzcd}
        \end{center}
        where 
        \begin{itemize}
            \item $\varphi \colon X \dashrightarrow X^+$ is a $G$-equivariant flop corresponding to the $G$-equivariant extremal ray spanned by
            \[ 
                [\ell]^G \coloneqq \frac{1}{d} \sum_{g\in G} g_* \big( [\ell_1] \big) = \frac{1}{d}\sum_{i=1}^d [\ell_i] ,
            \]
            and

            \item $\gamma^+\colon X^+\to Y^+\simeq X^+/G$ is a Galois quasi-\'etale cover.
        \end{itemize}
        
        Let $B_Y$ be an effective $\Q$-divisor on $Y$ such that the pair $(Y,B_Y)$ is klt and the map $\psi$ is a $(K_Y+B_Y)$-flip. Let $B_{Y^+}$ be the strict transform of $B_Y$ on $Y^+$, set $B_X \coloneqq \gamma^* B_Y$ and $B_{X^+} \coloneqq (\gamma^+)^* B_{Y^+}$. 
        According to \autoref{lem:lifting_MMP_step}, the $G$-equivariant flop $\varphi$ is also a $G$-equivariant $(K_X+B_X)$-flip. In particular,
        \begin{equation}
            \label{eq:ample_X+}
            B_{X^+} \text{ is ample over } Z.
        \end{equation}
        Let $B_{X'}$ be the strict transform of $B_X$ on $X'$ and note that $B_{X'} = (\gamma')^* B_{Y^+}$.
        Since $B_{Y^+}$ is ample over $W$ and since we have the commutative diagram:
        \begin{center}
            \begin{tikzcd}[row sep = normal, column sep = normal, /tikz/column 1/.append style={column sep = 0pt, inner xsep = 0pt}]
                & & X' \arrow[dl, "\eta'"'] \arrow[dd, "\gamma'"] \\
                & Z \arrow[dd, "\mu" swap] \\
                & & X'/G \arrow[dl, "\theta^+"] & \hspace{-2.8em} \simeq \hspace{1pt} Y^+ \\
                W \simeq & Z/G , 
            \end{tikzcd}
        \end{center}
        where the vertical morphisms are finite, we infer that 
        \begin{equation}
            \label{eq:ample_X'}
            B_{X'} = (\gamma')^* B_{Y^+} \text{ is ample over } Z.
        \end{equation}
        By \cite[Lemma 5.3]{Pro21}, \eqref{eq:ample_X+} and \eqref{eq:ample_X'} 
        we conclude that $\varphi$ coincides with $\rho$ up to isomorphism. This yields the second statement.
    \end{proof}

    \subsection{Further examples}
    \label{subsection:examples_Enriques}
    
    We present here some more examples of primitive Enriques varieties, which also illustrate various phenomena that were mentioned earlier. We begin with an example 
    of an irreducible Enriques surface with canonical singularities, which is not simply connected.
        
    \begin{exa}
        \label{example:Catanese}
        Let $X \subset \P^3$ be the Cefal\'u quartic surface from \cite[Theorem 1]{Catanese2023}. It has $16$ singular points of type $A_1$ by \cite[Theorem 1(ii)]{Catanese2023} and its minimal resolution $X_1$ is a K3 surface by \cite[Theorem 1 (ix)]{Catanese2023}, which carries a fixed-point-free anti-symplectic involution $\tau_1 \in \Aut(X_1)$ according to \cite[Theorem 1 (v) and (vii)]{Catanese2023}. Thus, the quotient $Y_1 \coloneqq X_1 / \langle \tau_1 \rangle$ is an Enriques surface.
        %
        %
        
        The K3 surface $X_1$ contains $16$ disjoint $(-2)$-curves 
        $E_1,\dots, E_{16}$, which are obtained by resolving the $16$ singular points of $X$. 
        There are also $16$ disjoint $(-2)$-curves $D_1,\dots,D_{16}$ on $X_1$, called \emph{tropes}, which satisfy $\tau_1(E_i)=D_i$ for every $i \in \{1, \dots, 16\}$; for their precise construction we refer to \cite[Section 7]{Catanese2023}. Therefore, the Enriques surface $Y_1$ contains sixteen $(-2)$-curves $E_i'$ as well, where $E_i' = \gamma_1(E_i)$ and $\gamma_1 \colon X_1 \to Y_1 = X_1 / \langle \tau_1 \rangle$ is the quotient map. In fact, according to \cite[Corollary 30]{Catanese2023}, there exists a set of four pairwise disjoint $(-2)$-curves $ \big\{E_{i_1}',E_{i_2}',E_{i_3}',E_{i_4}' \big\}$ on $Y_1$. By contracting this set and by invoking \autoref{prop:PEV_closed_under_bir_contraction}, we obtain a singular, $\Q$-factorial, primitive Enriques surface $Y_2$. It has canonical singularities by \autoref{rem:contraction_from_K-trivial},
        so it is not uniruled, while \cite[Theorem 1.1]{Tak03} implies that
        \[ 
            \pi_1(Y_2) \simeq \pi_1(Y_1) \simeq \Z / 2\Z . 
        \]
        
        Observe that $Y_2$ can also be described as the quotient $X_2 / \langle \tau_2 \rangle$, where $X_2$ is obtained from $X_1$ by contracting the set of $8$ pairwise disjoint $(-2)$-curves 
        \[ 
            \big\{ E_{i_1},E_{i_2},E_{i_3},E_{i_4}, D_{i_1},D_{i_2},D_{i_3},D_{i_4} \big\} \subset X_1 ,
        \] 
        and $\tau_2$ is the induced involution on $X_2$. By construction, the involution $\tau_2 \in \Aut(X_2)$ is still fixed-point-free on $X_2$, and hence the quotient map $ \gamma_2 \colon X_2 \to Y_2 = X_2 / \langle \tau_2 \rangle$ is \'etale. We thus obtain the following commutative diagram:
        \begin{center}
            \begin{tikzcd}[row sep = 3em, column sep = large, /tikz/column 1/.append style={column sep = 0pt, inner xsep = 0pt}, /tikz/column 7/.append style={column sep = 0pt}]
                & X_1 \arrow[d, "\gamma_1" swap] \arrow[rrrrr, "{\text{contracts } 
                \big\{E_{i_1},E_{i_2},E_{i_3},E_{i_4},D_{i_1},D_{i_2},D_{i_3},D_{i_4} \big\} }"] &  &  &  &  & X_2 \arrow[d, "\gamma_2"]  \\
                Y_1 \hspace{2pt} = 
                & X_1 / \langle \tau_1 \rangle \arrow[rrrrr, "{\text{contracts } \big\{E'_{i_1},E'_{i_2},E'_{i_3},E'_{i_4} \big\} }"] &  &  &  &  & X_2 / \langle \tau_2 \rangle \hspace{-4pt} & = \hspace{2pt} Y_2 .
            \end{tikzcd}
        \end{center}

        Finally, the alternative description of $Y_2$ as the quotient $X_2 / \langle \tau_2 \rangle$ allows us to verify that it is actually an \emph{irreducible} Enriques surface. Indeed, in the terminology of the paper \cite{GPP24}, the map $X_1 \to X_2$ is the contraction of the $(-2)$-curves in the ADE configuration $B = A_1^{\oplus 8}$ on the K3 surface $X_1$, which does not appear in the statement of \cite[Theorem 3.6]{GPP24}. By \cite[Theorem 3.7]{GPP24}, $X_2$ is thus an irreducible symplectic surface, and hence $Y_2 = X_2 / \langle \tau_2 \rangle$ is an irreducible Enriques surface, as asserted.
    \end{exa}

    Building on \autoref{example:Catanese} we give next some examples of higher-dimensional primitive Enriques varieties, including smooth ones. In particular, we construct a primitive, but not irreducible, Enriques variety that admits a crepant resolution by an Enriques manifold.
    
    \begin{exa}
        \label{example:PEV_from_Catanese}
        Let $X_1$ and $\tau_1 \in \Aut(X_1)$ be as in \autoref{example:Catanese}. Let $n \geq 3$ be an odd integer. Consider the punctual Hilbert scheme $X_1^{[n]} \coloneqq \Hilb^n(X_1)$ together with the induced anti-symplectic involution $ \tau_1^{[n]} \in \Aut \big( X_1^{[n]} \big)$ on $X_1^{[n]}$. According to \cite[Proposition 4.1]{OS11a}, $\tau_1^{[n]}$ is fixed-point-free, so the quotient $Y_1^{[n]} \coloneqq X_1^{[n]} / \langle \tau_1^{[n]} \rangle$ is an Enriques manifold of dimension $2n$ and index $d=2$.

        \medskip
        
        \noindent (a) Let $2\delta$ be the class of the diagonal divisor $\Delta^{[n]}\subset X_1^{[n]}$ parametrizing non-reduced subschemes of $X_1$. 
        By \cite[Example 4.2]{HT10b} we know that $\delta^\vee$ generates an extremal ray of $\NEb \big( X_1^{[n]} \big)$ and the corresponding contraction $g \colon X_1^{[n]} \to X_1^{(n)}$, the Hilbert--Chow morphism, is a crepant resolution of singularities of the symmetric product $X_1^{(n)} \coloneqq \Sym^n(X_1)$.
        By \cite[Example 4.11]{HT10b} any smooth rational curve $C \subset X_1$ induces $\mathbb{P}^n \simeq C^{[n]} \subset X_1^{[n]}$ and the lines in this projective $n$-space correspond to pencils of binary forms of degree $n$, e.g., families of subschemes
        \[  
            l = e_1 + \ldots + e_{n-1} + C =
            \{e_1 + \ldots + e_{n-1} + e_n \mid e_n \in C \} ,
        \]
        where the points $e_1, \dots, e_{n-1} \in X_1$ are fixed. Abusing notation, we denote by $[C]$ the class of the divisor on $X_1^{[n]}$
        parametrizing the length-$n$ subschemes of $X_1$ with some support on the $(-2)$-curve $C \subset X_1$. 
        We now claim that the class $\ell \coloneqq [C]^{\vee} - (n-1)\delta^\vee$ of a line $l$ in $C^{[n]}\simeq \P^n$
        generates an extremal ray of $\NEb \big( X_1^{[n]} \big)$, see also \cite[Example 4.7]{HT10b}. Indeed, the $(-2)$-curve $C \subset X_1$ can be contracted via a morphism $X_1 \to X_1'$, where $X_1'$ is a singular K3 surface with a node. This morphism induces a morphism $X_1^{(n)} \to (X_1')^{(n)}$ which contracts $C^{(n)} \subset X_1^{(n)}$. In turn the composite map $X_1^{[n]} \to X_1^{(n)} \to (X_1')^{(n)}$ contracts both the exceptional divisor $E$
        of the Hilbert--Chow morphism and $C^{[n]} \simeq \P^n$, and hence the extremal face generated by $[E]^{\vee}$ and $\ell$. This yields the claim.
            
        Consider now two $(-2)$-curves $E_{i_1}$ and $D_{i_1}$ on $X_1$ 
        and the corresponding two Lagrangian submanifolds $E_{i_1}^{[n]}\simeq \mathbb{P}^n$ and $D_{i_1}^{[n]}\simeq \mathbb{P}^n$ of $X_1^{[n]}$, respectively. 
        Let $l$ be a line in $E_{i_1}^{[n]}$ and let $l'$ be the image of $l$ under $\tau_1^{[n]}$ in $D_{i_1}^{[n]}$. Then $R=\R_{\geq 0}(\ell+\ell')$ is an extremal ray of $\NEb \big( X_1^{[n]} \big)^{\langle \tau_1^{[n]} \rangle}$, where $\ell$ and $\ell'$ denote the classes of the lines $l \subset E_{i_1}^{[n]}$ and $l' \subset D_{i_1}^{[n]}$, respectively.
        Let $\theta \colon X_1^{[n]} \to Z$ be the $\langle \tau_1^{[n]}\rangle$-equivariant flopping contraction of $R$.
        Let $A$ be an integral ample divisor on $Z$, let $H$ be an integral ample divisor on $X_1^{[n]}$, and consider the divisor $D \coloneqq \theta^*(A)-\varepsilon \big( H+\tau_1^{[n]}(H) \big)$ on $X_1^{[n]}$, where $0< \varepsilon \ll 1$. We may assume that the $\langle \tau_1^{[n]} \rangle$-pair $\big( X_1^{[n]},D \big)$ is klt and perform the $\langle \tau_1^{[n]} \rangle$-equivariant $(K_{X_1^{[n]}}+D)$-flip 
        of $R$, which is also a composition of algebraic elementary Mukai flops:
        \begin{center}
            \begin{tikzcd}
                X_1^{[n]} \arrow[rr, dashed, "\varphi"] \arrow[dr, "\theta" swap] && X' \arrow[dl, "\theta'"] \\
                & Z .
            \end{tikzcd}
        \end{center}
        By \autoref{prop:Mukai_flop_EM}, see also the proof of \cite[Proposition 4.2]{OS11b}, we know that $X'$ is a projective IHS manifold and that the induced $\langle \tau_1^{[n]} \rangle$-action on $X'$ remains free, so the quotient $Y' \coloneqq X'/ \langle \tau_1^{[n]} \rangle$ is an Enriques manifold of dimension $2n$ and index $d=2$.

        \medskip
        
        \noindent (b) Consider again the Hilbert--Chow morphism 
        $ g \colon X_1^{[n]} \to X_1^{(n)} $.
        By \autoref{example:symmetric_product_K3s}, $X_1^{(n)}$ is a primitive, but not irreducible, symplectic variety such that
        \[ 
            \widetilde{q} \big( X_1^{(n)} \big) = 0 \quad \text{ and } \quad \pi_1 \big( X_1^{(n)} \big) = \{ 1 \} .
        \]
        Since the induced anti-symplectic involution $\tau_1^{(n)}$ on $X_1^{(n)}$ is fixed-point-free,
        the quotient $Y_1^{(n)} \coloneqq X_1^{(n)} / \langle \tau_1^{(n)} \rangle$ is a singular primitive Enriques variety with torsion, but not trivial, canonical divisor, as $n$ is odd.
        Since the quotient map 
        $X_1^{(n)} \to Y_1^{(n)}$
        is an \'etale double cover, by Remarks \ref{rem:augmented_irreg}(iii) and \ref{rem:fundamental_group_PEV}(i), respectively, we obtain
        \[ 
            \widetilde{q} \big( Y_1^{(n)} \big) = 0 \quad \text{ and } \quad \pi_1 \big( Y_1^{(n)} \big) \simeq \Z / 2\Z .
        \]
        Note also that $Y_1^{(n)}$ is not an irreducible Enriques variety, since the primitive symplectic variety $X_1^{(n)}$ is not irreducible.
        
        Finally, as the morphism $g$ is $(\Z / 2\Z)$-equivariant,
        we get a unique birational morphism 
        \[ 
            h \colon Y_1^{[n]} = X_1^{[n]} / \langle \tau_1^{[n]} \rangle \to Y_1^{(n)} = X_1^{(n)} / \langle \tau_1^{(n)} \rangle ,
        \] 
        such that the following diagram commutes:
        \begin{center}
            \begin{tikzcd}[column sep = huge, row sep = large, /tikz/column 1/.append style={column sep = 0pt, inner xsep = 0pt}]
                & X_1^{[n]} \arrow[d] \arrow[r, "g"] & X_1^{(n)} \arrow[d] \\
                Y_1^{[n]} \hspace{2pt} = 
                & X_1^{[n]} / \langle \tau^{[n]} \rangle \arrow[r, "h"] & X_1^{(n)} / \langle \tau^{(n)} \rangle & \hspace{-6.5em} = \hspace{1pt} Y_1^{(n)} .
            \end{tikzcd}
        \end{center}
        In particular, $h$ is a crepant resolution of singularities of $Y_1^{(n)}$ by \autoref{rem:contraction_from_K-trivial}, so $Y_1^{(n)}$ has canonical singularities,
        and thus it is not uniruled.
    \end{exa}

    Finally, we briefly recall the construction from \cite[Section 4]{BHS25} in order to produce $4$-dimensional examples of primitive Enriques varieties which are simply connected and uniruled, and hence admit $\Q$-factorial terminal modifications whose underlying variety cannot be primitive Enriques according to \autoref{rem:Q-fact_very_sing}.
    
    \begin{exa}
        \label{example:PEV_from_CCCF}
        Given a nodal cubic threefold $C \subset \P^4$, the triple covering of $\P^4$ ramified along $C$ defines a cuspidal cyclic cubic fourfold $Z \subset \P^5$ with one cusp $\varsigma$. The singular locus $\Sigma = F(Z)_\text{sing}$ of its Fano variety of lines $F(Z)$ is a K3 surface.
        Assume henceforth that $Z$ does not contain any planes through $\varsigma$. By \cite[Theorem 4.1(1)(3)]{BHS25} we have the following commutative diagram:
        \begin{center}
            \begin{tikzcd}[row sep = large, column sep = large]
                \Bl_{\Sigma}\big(F(Z)\big) \arrow[d, "\rho" swap] \arrow[rd, "\widetilde{\varphi^{-1}}"] 
                \\
                F(Z) \arrow[r, "\varphi^{-1}" swap, dashed]  & {\Sigma^{[2]}} ,
            \end{tikzcd}
        \end{center}
        where $\rho$ is the blow-up of $F(Z)$ along $\Sigma$ and resolves the indeterminacy locus of the (bi)rational map $\varphi^{-1}$ and $\widetilde{\varphi^{-1}}$ is an isomorphism that maps the exceptional divisor of $\rho$ isomorphically onto a divisor on $\Sigma^{[2]}$, 
        which is contracted to $\Sigma$ by the birational morphism $\varphi \colon \Sigma^{[2]} \to F(Z)$. 
        In particular, $F(Z)$ is a primitive symplectic variety by \autoref{prop:dom_rational_map_from_PSV}, since $\Bl_{\Sigma} \big( F(Z) \big) \simeq \Sigma^{[2]}$ yields a crepant resolution of $F(Z)$ by \cite[Theorem 4.1(2)]{BHS25}.
        
        Now, by \cite[Theorem 4.1(4)]{BHS25} the non-symplectic automorphism $\sigma \in \Aut\big(F(Z)\big)$ of order $3$, which is induced by the covering automorphism $\iota \in \Aut(Z)$, lifts to a non-symplectic automorphism $\widetilde{\sigma} \in \Aut \big( \Bl_\Sigma \big( F(Z) \big) \big)$ of order $3$. By \cite[Remark 4.3]{BHS25} we also know that $\Fix(\sigma) \subseteq F(Z)$ is the surface $F(C)$ 
        and $\Fix(\widetilde{\sigma}) \subseteq \Bl_\Sigma \big( F(Z) \big)$ is the strict transform $\widetilde{F(C)}$ of $F(C)$.
        Therefore, the quotient
        \[ 
            \widetilde{Y} \coloneqq \Bl_\Sigma \big(F(Z)\big) / \langle \widetilde{\sigma} \rangle 
        \]
        is an irreducible Enriques variety of dimension $4$ such that
        \[ 
            K_{\widetilde{Y}} \nsim 0 \ \text{ but } \ 3K_{\widetilde{Y}} \sim 0, \ \text{ and } \ \pi_1 \big( \widetilde{Y} \big) = \{ 1 \} , 
        \]
        see \cite[Proposition 5.4]{BCS24} and \autoref{rem:fundamental_group_PEV}(ii),
        which is singular along the image of $\widetilde{F(C)}$ in $\widetilde{Y}$. In fact, it follows from the Reid--Shepherd-Barron--Tai criterion for quotient singularities that $\widetilde{Y}$ has klt but not canonical singularities, see \cite[\S 6.2.1]{BCS24},
        so it is uniruled. Similarly, the quotient
        \[ 
            Y \coloneqq F(Z) / \langle \sigma \rangle
        \]
        is a primitive Enriques variety of dimension $4$ such that
        \[ 
            K_Y \nsim 0 \ \text{ but } \ 3K_Y \sim 0, \ \text{ and } \ \pi_1 (Y) = \{ 1 \} , 
        \]
        see \cite[Proposition 5.4]{BCS24} and \autoref{rem:fundamental_group_PEV}(ii), 
        which is singular along the image of $F(C) \cup \Sigma$ in $Y$. We finally claim that $Y$ has klt but not canonical singularities as well.
        Indeed, since both the birational morphism $\varphi \colon \Sigma^{[2]} \to F(Z)$ and the isomorphism $\widetilde{\varphi^{-1}} \colon \Bl_\Sigma \big(F(Z)\big) \to \Sigma^{[2]}$ are $(\Z / 3\Z)$-equivariant by \cite[Theorem 4.1(1)(4)]{BHS25}, we conclude that $\rho$ itself is $(\Z / 3\Z)$-equivariant, so we obtain 
        the following commutative diagram:
        \begin{equation}
            \label{diagram:PEVs_from_BHS25}
            \begin{tikzcd}[row sep = large, column sep = large, /tikz/column 1/.append style={column sep = 0pt, inner xsep = 0pt}]
                & \Bl_\Sigma \big(F(Z)\big) \arrow[r, "\rho"] \arrow[d] & F(Z) \arrow[d] \\
                \widetilde{Y} = & \Bl_\Sigma \big(F(Z)\big) / \langle \widetilde{\sigma} \rangle \arrow[r, "h"] \hspace{1em} & \hspace{1em} F(Z) / \langle \sigma \rangle = Y , 
            \end{tikzcd}
        \end{equation}
        where $h$ is a birational morphism. By \autoref{rem:contraction_from_K-trivial}, $h$ is actually crepant, so $Y$ has klt but not canonical singularities, as claimed, and hence it is uniruled.

        We conclude this example with a comment regarding the Picard numbers of all the varieties appearing in \eqref{diagram:PEVs_from_BHS25}. According to \cite[Corollary 4.2]{BHS25} and its proof, we have 
        $\rho \left( \Bl_\Sigma \big(F(Z)\big) \right) = 3$
        and
        $\rho \big( F(Z) \big) = 1$. 
        The latter, together with \autoref{lem:NeronSeveri_GaloisCovering},
        imply that $\rho (Y) = 1$, which in turn yields $\rho \big( \widetilde{Y} \big) = 3$ by \eqref{diagram:PEVs_from_BHS25} and by construction.
    \end{exa}

    \section{Asymptotic theory for Enriques manifolds}
    \label{section:asymptotic_theory_EM}

    We begin this section by recalling first the following concept, which will be needed in Subsections \ref{subsection:volume_function_Enriques} and \ref{subsection:duality}. Given a smooth projective variety $W$ and a pseudoeffective $\R$-divisor $D$ on $W$, Naka\-yama \cite{Nak04book} defined a decomposition 
	\[ D = P_\sigma (D) + N_\sigma (D) , \]  
	known as the \emph{Nakayama--Zariski decomposition} of $D$; see also Boucksom's analytic approach \cite{Boucksom04}. The divisors $P_\sigma(D)$ and $N_\sigma(D)$ are called \emph{the positive part} and \emph{the negative part}, respectively, of the Nakayama--Zariski decomposition of $ D $. Recall also that $ P_\sigma(D) \in \Movb(W) $ by \cite[Lemma III.1.8 and Proposition III.1.14(1)]{Nak04book} and that $ N_\sigma(D) \geq 0 $ by construction.

    \medskip
    
    Let $X$ be an IHS manifold of dimension $2n$. 
    Due to the works of Beauville, Bogomolov and Fujiki, there exists a non-degenerate quadratic form $q_X$ on $H^2(X,\C)$, called the \emph{Beauville--Bogomolov--Fujiki (BBF) form}, which behaves like the intersection form on a surface. There also exists a positive constant $c_X \in \Q_{>0}$, called the \emph{Fujiki constant} of $X$, such that
    \[ 
        c_X \cdot q_X(\alpha)^n= \int_X \alpha^{2n} \ \text{ for all } \alpha \in H^2(X,\C) .
    \]
    For the precise definition and the basic properties of the BBF form we refer to \cite[Sections 8 and 9]{Beauville83a} and \cite[Section 23]{GHJ03book}.
    
    An effective $\R$-divisor $D = \sum_i a_i D_i$ on an IHS manifold $X$ is called \emph{exceptional} if the Gram matrix of its irreducible components $\big( q_X (D_i, D_j) \big)_{i,j}$ is negative definite. In particular, a prime divisor $E$ on $X$ is exceptional if and only if $q_X(E) < 0$. According to \cite[Theorem 4.5]{Boucksom04}, $D$ is exceptional if and only if $D = N_{\sigma} (D)$. Taking this fact into account, we say that an effective $\R$-divisor $D$ on an Enriques manifold $Y$ is \emph{exceptional} if $D = N_\sigma(D)$.

    \subsection{The ample and nef cones of an Enriques manifold}

    Let $X$ be an IHS manifold. The \emph{K\"ahler cone} of $X$ is the open convex cone 
    \[ 
        \mathcal{K}_X \subset H^{1,1}(X,\R) \coloneqq H^{1,1}(X) \cap H^2(X,\R) 
    \]
    of all K\"ahler classes on $X$, and the \emph{positive cone} of $X$ is the connected component $\mathcal{C}_X$ of the set 
    \[
        \big\{ \alpha \in H^{1,1}(X,\R) \mid q_X(\alpha) > 0 \big\}
    \] 
    that contains the K\"ahler cone of $X$. We denote by $\overline{\mathcal{K}}_X$ (resp.\ $\overline{\mathcal{C}}_X$) the closure of $\mathcal{K}_X$ (resp.\ $\mathcal{C}_X$) in $H^{1,1}(X,\R)$. Finally, when $X$ is projective, we set $ \Pos(X) \coloneqq \mathcal{C}_X \cap N^1(X)_\R $ and we denote by $\Posb(X)$ its closure in $N^1(X)_\R$.

    

    We now derive an analog of \cite[Théorème 1.2]{Boucksom01} and \cite[Proposition 3.2]{Huy03}, which characterize ample and nef classes on IHS manifolds, for Enriques manifolds.
    
    \begin{prop}
        \label{prop:positivity_on_Enriques}
        Let $Y$ be an Enriques manifold and let $\pi \colon X \to Y$ be its universal covering. An $\R$-divisor $D$ on $Y$ is ample (resp.\ nef) if and only if $[\pi^*(D)] \in \Pos(X)$ and $D \cdot C > 0$ (resp.\ $[\pi^*(D)] \in \Posb(X)$ and $D \cdot C \geq 0$) for every rational curve $C$ in $Y$.
    \end{prop}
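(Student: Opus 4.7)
The plan is to reduce the statement to Proposition~\ref{prop:positivity_on_IHS} via the universal covering $\pi \colon X \to Y$, which is a finite étale morphism of degree $d = |\pi_1(Y)|$. A key input is that ampleness and nefness descend along finite surjective morphisms, that is, a divisor $D$ on $Y$ is ample (resp.\ nef) if and only if $\pi^* D$ is ample (resp.\ nef) on $X$; this is a standard consequence of the usual criteria for these two notions (see \cite{Laz04}). Combining it with Proposition~\ref{prop:positivity_on_IHS} applied to the IHS manifold $X$, the only remaining task is to translate the condition on rational curves between $X$ and $Y$.

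In the forward direction, assume $D$ is ample (resp.\ nef) on $Y$. Then $\pi^* D$ is ample (resp.\ nef) on $X$, so Proposition~\ref{prop:positivity_on_IHS} gives $[\pi^* D] \in \Pos(X)$ (resp.\ $\Posb(X)$), and the inequality $D \cdot C > 0$ (resp.\ $D \cdot C \geq 0$) for every rational curve $C \subset Y$ is immediate from the positivity of $D$. For the converse, assume the two hypotheses on the right-hand side. By descent, it is enough to prove that $\pi^* D$ is ample (resp.\ nef) on $X$, and by Proposition~\ref{prop:positivity_on_IHS} this reduces to verifying that $\pi^* D \cdot C' > 0$ (resp.\ $\pi^* D \cdot C' \geq 0$) for every rational curve $C' \subset X$, since the positive-cone condition on $[\pi^* D]$ is already part of our assumptions.

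To carry out this last verification, let $\rho' \colon \P^1 \to X$ be a non-constant morphism with image $C'$. Since $\pi$ has finite fibres, the composition $\pi \circ \rho' \colon \P^1 \to Y$ is also non-constant, so $C \coloneqq \pi(C')$ is a rational curve on $Y$. The projection formula then yields
\[
    \pi^* D \cdot C' \,=\, D \cdot \pi_*[C'] \,=\, m \, (D \cdot C),
\]
where $m \coloneqq \deg(C' \to C)$ is a positive integer, and the hypothesis on rational curves in $Y$ gives the desired inequality.

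The only points that require any care are the descent of ampleness and nefness along the finite étale cover $\pi$ and the bijective bookkeeping between rational curves in $Y$ and their lifts in $X$, both of which are standard. No substantial obstacle is expected; the whole argument is essentially an application of Proposition~\ref{prop:positivity_on_IHS} on the universal cover together with the projection formula.
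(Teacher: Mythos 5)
Your proposal is correct and follows essentially the same route as the paper: descend ampleness/nefness along the finite cover, push rational curves forward via the projection formula, and apply \autoref{prop:positivity_on_IHS} on $X$. The only detail the paper makes explicit that you leave implicit is the identification $\Amp(X) = \mathcal{K}_X \cap N^1(X)_\R$ (resp.\ $\Nef(X) = \overline{\mathcal{K}}_X \cap N^1(X)_\R$), needed to pass from the K\"ahler-cone conclusion of \autoref{prop:positivity_on_IHS} to ampleness (resp.\ nefness) of the divisor $\pi^* D$; the paper cites \cite{DP04} for this.
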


    \begin{proof}
        Note that $D$ is ample (resp.\ nef) if and only if $\pi^*(D)$ is ample (resp.\ nef), since $\pi$ is finite and surjective, see \cite[Proposition 1.2.13, Corollary 1.2.28 and Example 1.4.4]{Laz04book_I}. Therefore, if $D$ is ample (resp.\ nef), then $[\pi^*(D)] \in \Pos(X)$ and $D \cdot C_Y > 0$ (resp.\ $[\pi^*(D)] \in \Posb(X)$ and $D \cdot C_Y \geq 0$) for every rational curve $C_Y$ in $Y$. 
         
        Conversely, it suffices to show that $\pi^* D$ is ample (resp.\ nef). To this end, let $C_X$ be a rational curve on $X$ and note that $C_Y \coloneq \pi(C_X)$ is a rational curve on $Y$. We have $D \cdot C_Y > 0$ (resp.\ $D \cdot C_Y \geq 0$) by hypothesis, while by the projection formula we obtain 
        \[ 
            \pi^* D \cdot C_X = D \cdot (d C_Y) > 0 \quad \big( \text{resp. } \pi^* D \cdot C_X = D \cdot (d C_Y) \geq 0 \big) ,
        \] 
        where $d$ is the degree of the restriction $\pi |_{C_X} \colon C_X \to C_Y$. Therefore, by assumption and by 
        \cite[Théorème 1.2]{Boucksom01} (resp.\ \cite[Proposition 3.2]{Huy03}) 
        we deduce that
        \[  
            [\pi^* D] \in \mathcal{K}_X \quad \big( \text{resp.\ } [\pi^* D] \in \overline{\mathcal{K}}_X \big) .
        \]
        Since 
        \[ 
            \Amp(X) = \mathcal{K}_X \cap N^1(X)_\R \quad \big( \text{resp.\ } \Nef(X) = \overline{\mathcal{K}}_X \cap N^1(X)_\R \big),
        \]
        we conclude that $\pi^* D$ is ample (resp.\ nef), so we are done.
    \end{proof}
    

    \subsection{The volume function of an Enriques manifold}
    \label{subsection:volume_function_Enriques}

    In this subsection we will prove \autoref{thm:volume_function_Enriques}, which is an analog of \cite[Theorem 1.6(2)]{Denisi23} for Enriques manifolds. We begin with some reminders about the volume of divisors.
    
    Fix a projective variety $Z$ of dimension $d$. The \emph{volume} of $D \in \Div(Z)$ is defined as
    \[
        \vol_Z (D) \coloneqq \limsup_{k \to \infty} \frac{h^0 \big(Z, \OO_Z(kD) \big)}{k^d/d!} \in \R_{\geq 0} \, .
    \]
    This is an invariant of the Cartier divisor $D$, which measures the asymptotic rate of growth of the dimension of the spaces of global sections $H^0 \big( Z, \OO_Z(mD) \big)$. We have 
    \[ 
        D \, \text{ is big if and only if } \vol_Z(D)>0 . 
    \]

    The notion of volume can be extended to divisors with rational or real coefficients. Specifically, the volume of $D \in \Div_{\Q}(Z) \coloneqq \Div(Z) \otimes_{\Z} \Q$
    is defined by picking an integer $k \geq 1$ such that $kD$ is an integral Cartier divisor and by setting 
    \[ 
        \vol_Z(D) \coloneqq \frac{1}{k^{d}} \vol_Z (kD) .
    \]
    This definition does not depend on the chosen integer $k$ by \cite[Lemma 2.2.38]{Laz04book_I}. More generally, given $D \in \Div_{\R}(Z) \coloneqq \Div(Z)\otimes_{\Z} \R$ and taking into account the fact that two numerically equivalent Cartier divisors have the same volume, see \cite[Proposition 2.2.41]{Laz04book_I}, choose a sequence $\{\xi_k\}_{k=1}^\infty \subset N^1(Z)_\Q$ of $\Q$-divisor classes converging to $[D] \in N^1(Z)_\R$ and set
    \[
        \vol_Z(D) \coloneqq \lim_{k \to \infty} \vol_Z(\xi_k) .
    \]
    This definition is independent of the choice of the sequence $\{\xi_k\}_{k=1}^\infty$ by \cite[Theorem 2.2.44]{Laz04book_I}. 
    
    According to \cite[Corollary 2.2.45]{Laz04book_I}, the function 
    \[ 
        \alpha \in N^1(Z)_\Q \mapsto \vol_Z(\alpha) \in \R_{\geq 0}
    \]
    extends to a continuous function 
    \[ 
        \vol_Z(-) \colon N^1(Z)_\R \to \R_{\geq 0} , \ \alpha \mapsto \vol_Z(\alpha) , 
    \] 
    called the \emph{volume function} of $Z$. We have
    \[
        [D] \in N^1(Z)_\R \, \text{ is big if and only if } \vol_Z(D)>0 . 
    \]

    \begin{dfn}
        \label{dfn:piecewise_polynomial}
        The volume function $\vol_Z(-)$ of a projective variety $Z$ is said to be \emph{piecewise polynomial} if there exists a decomposition $N^1(Z)_\R = \bigsqcup_{i \in I} \Sigma_i$ into subsets $\Sigma_i$ with nonempty interior such that $\vol_Z |_{\Sigma_i}$ is a polynomial function for every $i \in I$.
    \end{dfn}
    %

    Note that the subsets $\Sigma_i \subset \BgCn(Z)$ are neither open nor closed in general; see \cite[Example 5.1]{Denisi23}. In the special case that $Z$ is an Enriques manifold, we will construct a decomposition of its N\'eron--Severi space $N^1(Z)_\R$ as in \autoref{dfn:piecewise_polynomial},
    where each subset $\Sigma_i \subset \BgCn(Z)$ will actually be a convex cone.
    This is the content of our \autoref{thm:volume_function_Enriques} whose proof will be given shortly and will be based on the following two results.

    \begin{lem}
        \label{lem:NZD_EM_vs_IHS}
        Let $Y$ be an Enriques manifold and let $\pi \colon X \to Y$ be its universal covering.
        If 
        \[ 
            \alpha = P_\sigma (\alpha) + N_\sigma(\alpha) 
        \]
        is the Nakayama--Zariski decomposition of $\alpha \in \BgCn(Y)$, then
        \[ 
            \pi^* \alpha = \pi^* P_\sigma(\alpha) + \pi^* N_\sigma(\alpha) .
        \]
        is the Nakayama--Zariski decomposition of its pullback $\pi^* \alpha \in \BgCn(X)$. In particular, $N_\sigma(\alpha)$ (resp.\ $N_\sigma(\pi^*\alpha) = \pi^* N_\sigma(\alpha)$) is an exceptional $\R$-divisor on $Y$ (resp.\ on $X$).
    \end{lem}

    \begin{proof}
        Since $\pi$ is finite and surjective, it follows immediately from \cite[Theorem III.5.16]{Nak04book} that $N_\sigma (\pi^* \alpha) = \pi^* N_\sigma(\alpha)$, which yields the first assertion.
        %
        The \enquote{in particular} part of the statement is settled by \cite[Theorem 3.12(i)]{Boucksom04}.
    \end{proof}
    
    \begin{prop}
        \label{prop:BZC_EM}
        Let $Y$ be an Enriques manifold and let $\pi \colon X \to Y$ be its universal covering. 
        If $S = \{ E_j \}_{j=1}^s $ is the set of the irreducible components of the negative part $N_\sigma(\beta)$ of the Nakayama--Zariski decomposition of some $\beta \in \BgCn(Y)$, then the set
        \[
            \Sigma_S \coloneqq \left\{ \alpha \in \BgCn(Y) \ \Big| \ \Supp \big( N_{\sigma}(\alpha) \big) = \bigcup_{j=1}^s \Supp(E_j) \right\} \neq \emptyset
        \]
        is a convex subcone of $\BgCn(Y)$ with nonempty interior. In particular, if $\alpha_1, \alpha_2 \in \Sigma_S$, then 
        \[ 
            \alpha_1 + \alpha_2 = \big( P_\sigma(\alpha_1) + P_\sigma(\alpha_2) \big) + \big( N_\sigma(\alpha_1) + N_\sigma(\alpha_2) \big) 
        \]
        is the Nakayama--Zariski decomposition of $\alpha_1 + \alpha_2 \in \Sigma_S$.
    \end{prop}
    
    \begin{proof}
        In what follows we use repeatedly the characterization of the Nakayama--Zariski decomposition of pseudoeffective $\R$-divisors on IHS manifolds due to Boucksom \cite[Theorem 4.8]{Boucksom04}. We also denote by $q_X$ the BBF form of the considered IHS manifold $X$ and we set $d \coloneqq | \pi_1(Y) | \geq 2$.
        
        First, we show that the cone $\Sigma_S$
        is convex. To this end, fix $\alpha_1, \alpha_2 \in \Sigma_S$ and consider their Nakayama--Zariski decompositions
        \[ 
            \alpha_1 = P_\sigma(\alpha_1) + N_\sigma(\alpha_1) \quad \text{and} \quad \alpha_2 = P_\sigma(\alpha_2) + N_\sigma(\alpha_2) , 
        \]
        respectively, where
        \begin{equation}
            \label{eq:1_supp_lem6.3}
            \Supp \big( N_{\sigma}(\alpha_1) \big) = \Supp \big( N_{\sigma}(\alpha_2) \big) = \bigcup_{j=1}^s \Supp(E_j) .
        \end{equation}
        By \autoref{lem:NZD_EM_vs_IHS} we know that the Nakayama--Zariski decompositions of $\pi^* (\alpha_1)$ and $\pi^* (\alpha_2)$ are
        \[ 
            \pi^* \alpha_1 = \pi^* P_\sigma(\alpha_1) + \pi^* N_\sigma(\alpha_1) \quad \text{and} \quad \pi^* \alpha_2 = \pi^* P_\sigma(\alpha_2) + \pi^* N_\sigma(\alpha_2) ,
        \]
        respectively, and by \eqref{eq:1_supp_lem6.3} we also deduce that
        \begin{equation}
            \label{eq:2_supp_lem6.3}
            \Supp \big( \pi^* N_{\sigma} (\alpha_1) \big) = \Supp \big( \pi^* N_{\sigma} (\alpha_2) \big) = \bigcup_{j=1}^s \pi^{-1} \big( \Supp (E_j) \big).
        \end{equation}
        
        We now claim that $\pi^* \big( P_\sigma(\alpha_1) + P_\sigma(\alpha_2) \big)$ and $\pi^* \big( N_\sigma(\alpha_1) + N_\sigma(\alpha_2) \big)$ are the positive and the negative part, respectively, of the Nakayama--Zariski decomposition of $\pi^*(\alpha_1 + \alpha_2)$. Indeed, using \eqref{eq:2_supp_lem6.3} and \cite[Theorem 4.8]{Boucksom04}, it is straightforward to verify that the Gram matrix with respect to $q_X$ of the irreducible components of $\pi^*N_\sigma(\alpha_1)+\pi^*N_\sigma(\alpha_2) \geq 0$ is negative definite (if nonzero) and also that $\pi^* P_\sigma(\alpha_1)+\pi^* P_\sigma(\alpha_2)$ is $q_X$-orthogonal to any irreducible component of $\pi^* N_\sigma(\alpha_1)+\pi^*N_\sigma(\alpha_2)$ and satisfies
        $\big[\pi^* P_\sigma(\alpha_1)+\pi^* P_\sigma(\alpha_2) \big] \in \Movb(X) \cap \BgCn(X)$.
        By the uniqueness of the Nakayama--Zariski decomposition in the setting of IHS manifolds, see \cite[Theorem 4.8]{Boucksom04}, we conclude that
        \begin{equation*}
            P_\sigma \big( \pi^* (\alpha_1 + \alpha_2) \big) = \pi^* \big( P_\sigma(\alpha_1) + P_\sigma(\alpha_2) \big)
        \end{equation*}
        and
        \begin{equation}
            \label{eq:3_Nsigma_lem6.3}
            N_\sigma \big( \pi^* (\alpha_1 + \alpha_2) \big) = \pi^* \big( N_\sigma(\alpha_1) + N_\sigma(\alpha_2) \big) ,
        \end{equation}
        as claimed. 
        
        We next claim that $P_\sigma(\alpha_1) + P_\sigma(\alpha_2)$ and $ N_\sigma(\alpha_1) + N_\sigma(\alpha_2)$ are the positive and the negative part, respectively, of the Nakayama--Zariski decomposition of $\alpha_1 + \alpha_2$. Indeed, by \eqref{eq:3_Nsigma_lem6.3} and 
        by \autoref{lem:NZD_EM_vs_IHS} for the class $\alpha_1 + \alpha_2 \in \BgCn(Y)$ we obtain 
        $\pi^* \big( N_\sigma(\alpha_1) +     N_\sigma(\alpha_2) \big) 
        = N_\sigma \big( \pi^* (\alpha_1 + \alpha_2) \big) 
        = \pi^* N_\sigma (\alpha_1 + \alpha_2) , $
        which yields
        \begin{equation}
            \label{eq:4_Nsigma_lem6.3}
            N_\sigma (\alpha_1 + \alpha_2) = N_\sigma(\alpha_1) + N_\sigma(\alpha_2) 
        \end{equation}
        in view of \cite[Chapter 7, Theorem 2.18]{Liu02book}, and hence 
        \[
            P_\sigma (\alpha_1 + \alpha_2) = P_\sigma(\alpha_1) + P_\sigma(\alpha_2) ,
        \]
        as claimed. In particular, \eqref{eq:1_supp_lem6.3} and \eqref{eq:4_Nsigma_lem6.3} imply that $\alpha_1 + \alpha_2 \in \Sigma_S$, as desired.

        Finally, we demonstrate that the interior of $\Sigma_S$ is not empty. In view of \autoref{lem:NZD_EM_vs_IHS}, we begin by considering the Boucksom--Zariski chamber $\Sigma_{S'} \subset \BgCn(X)$ associated with the set $S' = \{ E_i' \}_{i=1}^r$ of the irreducible components of $N_\sigma(\pi^*\beta) = \pi^* \big( N_{\sigma}(\beta) \big)$, whose interior is not empty; 
        we refer to \cite[Section 4]{Denisi23} for the details. We may assume for simplicity that $S$ consists of a single prime divisor $E$, which implies that the $E_i'$ lie in a unique $\pi_1(Y)$-orbit. Given any ample $\Q$-divisor $A$ on $Y$, we now construct a $\pi_1(Y)$-invariant $\Q$-divisor $D_1'$ on $X$ of the form 
        \begin{equation}
            \label{eq:BZC_int_divisor}
            D_1' = \pi^*A + a\sum_{i=1}^r E_i' + b\sum_{i=1}^r E_i' 
        \end{equation}
        which lies in the interior of the Boucksom--Zariski chamber $\Sigma_{S'}$ and the positive and negative parts of whose Nakayama--Zariski decomposition are
        \begin{equation}
            \label{eq:BZC_Psigma_Nsigma}
            P_\sigma(D_1') = \pi^*A + a\sum_i E_i' \quad \text{and} \quad N_\sigma(D_1') = b\sum_i E_i' \, ,
        \end{equation}
        respectively, for some $a, b \in \Q_{>0}$. Specifically, arguing as in the proof of \cite[Lemma 4.15]{Denisi23}, we can construct a big and movable $\Q$-divisor $M \coloneqq \pi^*A + \sum_i a_i E_i'$ on $X$ such that $\mathrm{Null}_{q_X}(M)=S'$, where $\mathrm{Null}_{q_X}(M)$ is the collection of prime divisors $E$ on $X$ such that $q_X(M,E)=0$. Then the $\Q$-divisor $M' \coloneqq \sum_{g\in G} g_*(M) = d \pi^*A + a' \sum_i E_i' $, where $a' = \sum_i a_i \in \Q_{>0}$, is also big and movable. Hence, for any $b \in \Q_{>0}$, the $\Q$-divisor $D_1' \coloneqq \frac{1}{d}M' + b \sum_i E_i'$ satisfies \eqref{eq:BZC_Psigma_Nsigma} with $a = a' / d = \big( \sum_i a_i \big) / d \in \Q_{>0}$ according to \cite[Remark 4.20]{Denisi23},
        it is big and $\pi_1(Y)$-invariant by construction, 
        and additionally it lies in the interior of $\Sigma_{S'}$ by \cite[Corollary 4.19]{Denisi23}.

        Consider next the $\Q$-divisor $D_{1,0} \coloneqq \frac{1}{d} \pi_* (D_1')$ on $Y$, where $D_1'$ is chosen as in \eqref{eq:BZC_int_divisor}. Since $\pi^* (D_{1,0}) = D_1'$,
        we infer that $D_{1,0}$ is big, and it follows now from \autoref{lem:NZD_EM_vs_IHS}, \eqref{eq:BZC_Psigma_Nsigma} and \cite[Theorem III.5.16]{Nak04book} that
        \[
            \Supp N_\sigma (D_{1,0}) = \bigcup_j \Supp E_j .
        \]
        Thus, $D_{1,0} \in \Sigma_S$. Now, pick any effective $\R$-divisor $G$ on $Y$ and consider for any $\varepsilon > 0$ the big $\R$-divisor $D_{1,\varepsilon}=D_{1,0} + \varepsilon G$ on $Y$. Since $D_1'$ lies in the interior of $\Sigma_{S'}$ by construction, $\pi^*(D_{1,\varepsilon}) = D_1'+\varepsilon \pi^*(G)$ also lies in $\Sigma_{S'}$ for every $0 \leq \varepsilon \ll 1$, so
        \[
            \Supp N_\sigma \big( \pi^*(D_{1,\varepsilon}) \big) = \bigcup_i \Supp E_i' \ \text{ for every } \, 0 \leq \varepsilon \ll 1 
        \]
        by definition of the chamber $S'$. Since $N_\sigma (D_{1,\varepsilon}) = \pi_* N_\sigma \big( \pi^*(D_{1,\varepsilon}) \big)$  by \cite[Theorem III.5.16]{Nak04book}, we conclude that 
        \[
            \Supp N_\sigma (D_{1,\varepsilon}) = \bigcup_j \Supp E_j \ \text{ for every } \, 0 \leq \varepsilon \ll 1 .
        \] 
        Thus, $D_{1,\varepsilon} \in \Sigma_S$ for every $0 \leq \varepsilon \ll 1$. In conclusion, the interior of $\Sigma_S$ is not empty.
    \end{proof}
    
    \begin{proof}[\textbf{Proof of \autoref{thm:volume_function_Enriques}}]
        Consider the universal covering $\pi \colon X \to Y$ of $Y$, where $X$ is a projective IHS manifold with BBF form $q_X$, and let $d \geq 2$ be its degree. It follows from \cite[Proposition 2.2.35]{Laz04book_I} that $\vol_Y(-)$ is homogeneous of degree $2n$, and to prove that it is piecewise polynomial, we may restrict to the big cone $\BgCn(Y)$ of $Y$, since the volume function vanishes on its complement in $N^1(Y)_\R$. 
        
        Let $S = \{ E_j \}_j$ be the set of the irreducible components of the negative part of the Nakayama--Zariski decomposition of some big class on $Y$ and consider the set
        \[
            \Sigma_S \coloneqq \left\{ \alpha \in \BgCn(Y) \ \Big| \ \Supp \big( N_{\sigma}(\alpha) \big) = \bigcup_j \Supp(E_j) \right\} ,
        \]
        which is a nonempty convex subcone of $\BgCn(Y)$ with nonempty interior by \autoref{prop:BZC_EM}.
        We will show below that $\vol_Y(-) |_{\Sigma_S}$ is polynomial. Since $\BgCn(Y)$ is the disjoint union 
        of subcones of type $\Sigma_S$, this will imply that $\vol_Y(-)$ is piecewise polynomial, as desired.
        
        Given $\alpha \in \Sigma_S$, consider the Boucksom--Zariski chamber $\Sigma_{S'} \subset \BgCn(X)$ associated with the set $S'$ of the irreducible components of $N_\sigma(\pi^*\alpha) = \pi^* \big( N_{\sigma}(\alpha) \big)$; see \cite[Definition 4.11]{Denisi23} and \autoref{lem:NZD_EM_vs_IHS}. Let $(S')^{\perp}$ be the subspace of $N^1(X)_\R$ of classes that are $q_X$-orthogonal to any element of $S'$ and let $\mathcal{B} = \{B_1,\dots,B_k\}$ be a basis for $S^{\perp}$. By the proof of \cite[Theorem 1.6(2)]{Denisi23} we know that the volume function of $X$ restricted to $\Sigma_{S'}$ is given by the polynomial
        \[
            \vol_{X}(-) \big\vert_{\Sigma_{S'}} = c_X \cdot q_X \left( \sum_{i=1}^k x_i B_i \right)^n ,
        \]
        where the $x_i$ are the coordinates with respect to the basis $\mathcal{B}$.
        Note that $\Sigma_S$ embeds into $\Sigma_{S'}$ via the inclusion $N^1(Y)_\R \hookrightarrow N^1(X)_\R$.
        It follows from \cite[Proposition 2.9(1)]{Kuronya06} and the above relation that the volume function of $Y$ restricted to $\Sigma_S$ is given by the polynomial
        \[
            \vol_{Y}(-) \big\vert_{\Sigma_S} = \frac{c_X}{d} \cdot q_X \left( \sum_{i=1}^k x_i B_i \right)^n .
        \]
        This finishes the proof.
    \end{proof}

    \subsection{Duality for cones of partially ample divisors}
    \label{subsection:duality}

    In this subsection we will prove \autoref{thm:cone_duality_Enriques}, which is an analog of \cite[Theorem B]{DRO25} for Enriques manifolds.
    
    We first recall the definitions of the various asymptotic base loci associated with an $\R$-Cartier $\R$-divisor on a normal projective variety. We refer to \cite{Nak04book,ELMNP06,BBP13,TX25} for their basic properties as well as for more information about them.
    
    \begin{dfn}
        \label{dfn:asymptoticbaseloci}
        Let $Z$ be a normal projective variety and let $D \in \Div_{\R}(Z)$.
        \begin{enumerate}[(i)]
            \item The \emph{stable base locus} of $D$ is defined as 
            \[
            \sbs(D) \coloneqq \bigcap \big\{ \Supp(E) \mid E \text{ is an effective $\R$-divisor such that $E \sim_\R D$} \big\} .
            \]
            
            \item The \emph{augmented base locus} of $D$ is defined as 
            \[
            \abs(D) \coloneqq \bigcap_{D=A+E} \Supp(E) ,
            \]
            where the intersection is taken over all decompositions of the form $D=A+E$ such that $A$ is an ample $\R$-divisor on $Z$ and $E$ is an effective $\R$-Cartier $\R$-divisor on $Z$.
            
            \item The \emph{restricted base locus} of $D$ (also referred to as the \emph{diminished base locus} in the literature) is defined as 
            \[
            \dbs(D) \coloneqq \bigcup_{A} \sbs(D+A),
            \]
            where the union is taken over all ample $\R$-divisors $A$ on $Z$.
        \end{enumerate}
    \end{dfn}
     
    The augmented and the restricted base locus of $D$ depend only on the numerical equivalence class $[D] \in N_1(Z)_\R$ of $D$, but this does not hold in general for the stable base locus of $D$, see \cite[Example 1.1]{ELMNP06}. One readily sees that we have the inclusions 
    $\dbs(D) \subseteq \sbs(D) \subseteq \abs(D)$,
    which are strict in general, even simultaneously; see \cite[Example 3.4]{TX25} for such an example.

    The next result is an analog of \cite[Theorem A(1)]{DRO25}. It estimates the dimension of the augmented and the restricted base loci of $\R$-Cartier $\R$-divisors on Enriques manifolds. It provides tacitly information about the dimension of their stable base locus as well. Indeed, for any \emph{big} $\R$-divisor $D$ on an Enriques manifold $Y$ we have $\dbs(D) = \sbs(D)$ by \cite[Corollary A]{BBP13}; see also \cite[Corollary B(ii)]{TX25}.
    
    \begin{prop}
        \label{prop:abl_Enriques}
        Let $Y$ be an Enriques manifold of dimension $2n$. If $D$ is a big (resp.\ pseudoeffective) $\R$-divisor on $Y$, then the irreducible components of $\abs(D)$ and $\dbs(D)$ have dimension greater than or equal to $n$, whenever $\abs(D) \neq \emptyset$ and $\dbs(D) \neq \emptyset$.
    \end{prop}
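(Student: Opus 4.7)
The plan is to reduce to the analogue for projective IHS manifolds, namely \cite[Theorem A(1)]{DO23}, via the universal covering $\pi \colon X \to Y$; recall that $X$ is a projective IHS manifold of dimension $2n$ and that $\pi$ is a finite \'etale morphism. Since $\pi$ is finite and surjective, the $\R$-divisor $\pi^{*} D$ is big (resp.\ pseudo-effective) whenever $D$ is, by \cite[Proposition 2.2.43 and Example 1.4.4]{Laz04}.

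The key intermediate step is the compatibility of the augmented and restricted base loci with pullback along $\pi$, namely
\[
\pi^{-1}\big(\abs(D)\big) = \abs(\pi^{*} D) \ \text{ and } \ \pi^{-1}\big(\dbs(D)\big) = \dbs(\pi^{*} D) .
\]
These identities are standard for finite surjective morphisms and follow from the definitions, since $\pi^{*}$ preserves ampleness and effectivity, and since the support of the pullback of an effective $\R$-divisor equals the preimage of its support. Applying \cite[Theorem A(1)]{DO23} to $\pi^{*} D$ on $X$, one then infers that every irreducible component of $\abs(\pi^{*} D)$ and of $\dbs(\pi^{*} D)$ has dimension at least $n$.

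Finally, I would descend the dimension bound from $X$ to $Y$ as follows. Let $Z$ be an irreducible component of $\abs(D)$. Since $\pi$ is finite (hence flat) and surjective, the preimage $\pi^{-1}(Z)$ is equidimensional of dimension $\dim Z$, and each of its irreducible components maps surjectively onto $Z$. Let $W$ be such a component and let $W'$ be an irreducible component of $\pi^{-1}\big(\abs(D)\big) = \abs(\pi^{*} D)$ containing $W$. Then $\pi(W')$ is an irreducible closed subset of $\abs(D)$ containing $Z$, so the maximality of $Z$ forces $\pi(W') = Z$, whence $\dim W' = \dim Z = \dim W$ and so $W = W'$. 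Thus $W$ is itself an irreducible component of $\abs(\pi^{*} D)$, and the IHS case yields $\dim Z = \dim W \geq n$. The identical argument applies verbatim to $\dbs(D)$.

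The only delicate point of this strategy is the pullback compatibility of the base loci under the \'etale cover, but this is standard and essentially formal; the remaining descent of the dimension estimate is a routine application of finiteness of $\pi$ and of the maximality of irreducible components.
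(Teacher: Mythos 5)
Your overall strategy coincides with the paper's: pull back along the universal covering $\pi \colon X \to Y$, apply \cite[Theorem A(1)]{DO23} on the IHS manifold $X$, and descend the dimension bound using finiteness of $\pi$. Your descent step (showing that a component of $\pi^{-1}(Z)$ dominating a component $Z$ of $\abs(D)$ is itself an irreducible component of $\abs(\pi^*D)$) is correct and in fact spells out a point the paper leaves implicit.

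Two caveats, however. First, the identity $\pi^{-1}\big(\abs(D)\big) = \abs(\pi^*D)$ does not ``follow from the definitions'': the inclusion $\abs(\pi^*D) \subseteq \pi^{-1}\big(\abs(D)\big)$ is indeed formal (pull back a decomposition $D = A + E$, noting that $\pi^*A$ is ample since $\pi$ is finite), but the reverse inclusion requires descending decompositions of $\pi^*D$ from $X$ to $Y$, and a naive Galois averaging only bounds $\abs(D)$ by the image of the $G$-orbit of $\Supp(E')$, not of $\Supp(E')$ itself. The paper invokes a theorem for exactly this equality, namely \cite[Theorem 1.1]{Gomez22}; you should cite or prove it rather than declare it formal. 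Second, your treatment of $\dbs(D)$ is the genuinely different, and shakier, part. For $D$ pseudo-effective but not big, $\dbs(D)$ is only a countable union of closed subsets and need not be closed, so the pullback identity $\pi^{-1}\big(\dbs(D)\big) = \dbs(\pi^*D)$, the meaning of ``irreducible components'', and the maximality argument all require extra care, and you would additionally need the restricted-base-locus half of \cite[Theorem A(1)]{DO23}. The paper sidesteps all of this by writing $\dbs(D)= \bigcup_{A} \abs(D+A)$ over ample $\R$-divisors $A$ (\cite[Lemma 1.14]{ELMNP06}), so that every irreducible component of $\dbs(D)$ is an irreducible component of $\abs(D+A)$ for some ample $A$ with $D+A$ big; the restricted case then reduces to the augmented one. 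I would adopt that reduction.
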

    
    \begin{proof}
        It suffices to show that the irreducible components of $\abs(D)$, where $D$ is any big and non-ample $\R$-divisor on $Y$, have dimension greater than or equal to $n$. Indeed, by \cite[Lemma 1.14]{ELMNP06} we have 
        \[ \dbs(D)= \bigcup_{A} \abs(D+A) , \] 
        where the union is taken over all ample $\R$-divisors $A$ on $Y$, so if $V$ is an irreducible component of $\dbs(D)$, then it is an irreducible component of $\abs(D+A)$ for some ample $\R$-divisor $A$ on $Y$. 
        
        Assume now that $D$ is a big, non-ample $\R$-divisor on $Y$, so that $\emptyset \subsetneq \abs(D) \subsetneq Y$. Consider the universal covering $\pi \colon X \to Y$ of $Y$, where $X$ is an IHS manifold, and note that the pullback $\pi^*(D)$ is again a big, non-ample $\R$-divisor on $X$ by \cite[Corollary 1.2.28]{Laz04book_I} and \cite[Proposition 2.9(1)]{Kuronya06}. By \cite[Theorem 1.1(1.1.2)]{Gomez25} we obtain
        \begin{equation}
            \label{eq:preimage_of_augmented_base_loci}
            \abs \big( \pi^*(D) \big) = \pi^{-1} \big( \abs(D) \big) . 
        \end{equation}
        Let $V$ be an irreducible component of $\abs(D)$. Then there exists an irreducible component $T$ of $\pi^{-1}(V)$ mapping onto $V$ via $\pi$. 
        By \eqref{eq:preimage_of_augmented_base_loci} and \cite[Theorem A(1)]{DRO25} we know that $\dim T \geq n$, and since $\pi$ is finite, we conclude that $\dim V \geq n$, as asserted.
    \end{proof}
    
    Using the augmented base locus of big divisor classes on a given projective variety, it is possible to characterize various cones in its N\'eron--Severi space. For example, the pseudoeffective cone $\Pseff(Z)$ of a projective variety $Z$ 
    can be viewed as the set of Cartier divisor classes on $Z$ whose augmented base locus has codimension at least one in $Z$; see \cite[Examples 1.7 and 1.18]{ELMNP06}. This construction can be generalized in a natural way as follows.
    
    \begin{dfn}
        \label{dfn:amp_k-cone}
        Let $Z$ be a normal projective variety. For any $1\leq k\leq \dim(Z)$ we denote by $\Amp^k(Z)$ the convex cone in the N\'eron--Severi space $N^1(Z)_\R$ of $Z$ spanned by those Cartier divisor classes $\alpha$ satisfying $\dim \big( \abs(\alpha) \big) \leq k-1$. A class lying in $\Amp^k(Z)$ is said to be \emph{$k$-ample}.
    \end{dfn}

    It is an immediate consequence of \cite[Examples 1.8 and 1.9]{ELMNP06} that $\Amp^k(Z)$ is a convex cone in $N^1(Z)_\R$. Moreover, all $k$-ample classes $\alpha$ on a normal projective variety $Z$ are automatically big by \cite[Example 1.7]{ELMNP06}.
    In particular, $1$-ample classes on $Z$ are ample, as the augmented base locus cannot have isolated points by \cite[Proposition 1.1]{ELMNP09} and we have $\Amp^1(Z) = \Amp(Z)$ by \cite[Example 1.7]{ELMNP06}.

    Our main objective in this subsection is to compute the duals of the cones of $k$-ample classes on Enriques manifolds, following closely the presentation from \cite[Section 4]{DRO25}. To this end, we begin with some reminders.
    
    Let $\zeta \colon Z \dashrightarrow Z'$ be a small map
    between $\Q$-factorial projective varieties. Since the N\'eron--Severi spaces $N^1(Z)_\R$ and $N^1(Z')_\R$ are isomorphic via $\zeta_*$, their dual spaces $N_1(Z)_\R$ and $N_1(Z')_\R$ are also isomorphic. Under this isomorphism, denoted by $\zeta^*_\mathrm{num}$ and called the \emph{numerical pullback of curves} \cite{Araujo10}, any curve class $\gamma' \in N_1(Z')_\R$ can be pulled back to a curve class $\zeta^*_\mathrm{num} (\gamma') \in N_1(Z)_\R$. 
    Note that if $\delta' \in N^1(Z')_\R$ and $\gamma' \in N_1(Z')_\R$, then 
    \begin{equation}
        \label{eq:numerical_pullback_of_curves}
        \zeta^* (\delta') \cdot \zeta^*_\mathrm{num} (\gamma') = \delta' \cdot \gamma' . 
    \end{equation}
    For some more information about the numerical pullback of curves we refer to \cite[Section 4]{Araujo10}.
    
    Given a map $\zeta \colon Z \dashrightarrow Z'$ as above, for any $1 \leq k \leq \dim Z$ we define
    \[ 
        \Mob_{k, \, \zeta}(Z,Z') \subseteq N_1(Z)_\R 
    \]
    to be the image under $\zeta^*_\mathrm{num}$ of the convex cone in $N_1(Z')_\R$ generated by numerical classes of irreducible curves $C'$ in $Z'$ moving in a family that sweeps out the birational image via $\zeta$ of a subvariety of $Z$ of dimension at least $k$.
    In the special case that $\zeta = \Id_Z$ we set 
    \[ 
        \Mob_k(Z) \coloneqq \Mob_{k, \, \Id_Z}(Z,Z) \subseteq N_1(Z)_\R
    \]
    and we call it the \emph{cone of $k$-movable curves} of $Z$.
    
    Now if $f \colon Y \dashrightarrow Y'$ is a birational map between primitive Enriques varieties with terminal singularities, then $f$ is small by \cite[Corollary 3.54]{KM98}.
    We can thus give the following definition.
    
    \begin{dfn}
        \label{dfn:birationally_k-movable_cone}
        Let $Y$ be an Enriques manifold. The \emph{cone of birationally $k$-movable classes} of $Y$ is defined as 
        \[
            \bMob_k(Y) \coloneqq
            \sum_{Y'} \left(\sum_{f \colon Y \dashrightarrow Y'} \Mob_{k,f}(Y,Y')\right) ,
        \]
        where the sum runs over all  $\Q$-factorial primitive Enriques varieties $Y'$ with terminal singularities and all birational maps $f \colon Y \dashrightarrow Y'$. Its closure in $N_1(Y)_\R$ is denoted by $\bMobc_k(Y)$.
    \end{dfn}

    We are finally ready to prove the main result of this section, \autoref{thm:cone_duality_Enriques}.
    
    \begin{proof}[\textbf{Proof of \autoref{thm:cone_duality_Enriques}}]
        The second part of the statement follows readily from \autoref{prop:abl_Enriques}, which shows that 
        \[
            \Amp^k(Y) = \Amp^1(Y) = \Amp(Y) \ \text{ for any } 1 \leq k \leq n ,
        \]
        so it remains to deal with the first part of the statement. We will show equivalently that 
        \[
            \overline{\Amp^k}(Y) = \bMob_k(Y)^\vee .
        \]

        
        \emph{Step 1}: We show that $ \overline{\Amp^k}(Y) \subseteq \bMob_k(Y)^\vee$. 
        
        Pick a class $\beta \in \Amp^k(Y)$ and a generating class $\gamma \in \bMob_k(Y)$. Up to passing to a birational model of $Y$, the curve class $\gamma$ is represented by an irreducible curve $C$ moving in a family that covers a subvariety $S$ of $Y$ with $\dim S \geq k$.
        On the other hand, we have $\dim \abs(\beta) < k$.
        Thus, the curve $C$ can be moved out of the locus $\abs(\beta)$, so 
        $\beta \cdot \gamma > 0$ by \cite[Remark 2.8]{DRO25}. It follows that $u \cdot v \geq 0$ for any $u \in \overline{\Amp^k}(Y)$ and any $v \in \bMob_k(Y)$, which yields the desired inclusion.

        \medskip

        \emph{Step 2}: We show that $ \bMob_k(Y)^\vee \subseteq \overline{\Amp^k}(Y)$. 
        
        It suffices to prove that
        \begin{equation}
            \label{eq:interiors}
            \operatorname{int} \left( \bMob_k(Y)^{\vee} \right) \subseteq \Amp^k(Y).
        \end{equation}
        We first note that
        \begin{equation}
            \label{eq:1_bMob_in_pseff}
            \bMob_k(Y)^{\vee} \subseteq \Pseff(Y),
        \end{equation}
        since we clearly have $\Mob_{2n}(Y) \subseteq \bMob_k(Y)$ and $\Mob_{2n}(Y)^{\vee} = \Pseff(Y)$ by \cite[Theorem 2.2]{BDPP13}.

        \medskip
        
        \emph{Step 2a}: In this step we show that
        \begin{equation}
            \label{eq:2_bMob_in_Movb}
            \bMob_{2n-1}(Y)^{\vee} \subseteq \Movb(Y) . 
        \end{equation}
        
        Since both of these cones are closed, it suffices to show that $\operatorname{int} \big( \bMob_{2n-1}(Y)^{\vee} \big) \subseteq \Movb(Y)$.
        Pick now $\alpha \in \operatorname{int} \big( \bMob_{2n-1}(Y)^{\vee} \big)$, which is a big class by \eqref{eq:1_bMob_in_pseff}, and consider its Nakayama--Zariski decomposition 
        \[ 
            \alpha = P_{\sigma}(\alpha) + N_{\sigma}(\alpha) = P + N .
        \]
        By \cite[Proposition 3.20]{Boucksom04} we have
        \begin{equation}
            \label{eq:3_volume_NZD}
            \vol_Y (P) = \vol_Y (\alpha) > 0 .
        \end{equation}
        Suppose by contradiction that $\alpha \notin \Movb(Y)$.
        We may assume that $(Y,P)$ is klt and run a $(K_Y+P)$-MMP with scaling of an ample divisor, which is a sequence of $P$-flops, as $[P] \in \Movb(Y)$. Since $P$ is big by \eqref{eq:3_volume_NZD}, \cite[Corollary 1.4.2]{BCHM10} implies that this MMP terminates with a minimal model $f \colon (Y,P) \dashrightarrow (Y',P')$, where $Y'$ is a $\Q$-factorial primitive Enriques variety with terminal singularities by \cite[Lemma 3.38]{KM98} and $P'$ is a nef and effective $\R$-divisor on $Y'$. 
        Moreover, by \eqref{diagram:Enriques_MMP_lifting} we obtain:
        \begin{center}
            \begin{tikzcd}
                X' \arrow[r,"h"] & X \arrow[r,"g"] & Y',
            \end{tikzcd}
        \end{center}
        where $X'$ is a projective IHS manifold by \autoref{lem:MMP_step_symplectic}(iii), $X$ is a $\pi_1(Y)\Q$-factorial primitive symplectic variety with terminal singularities,
        $h \colon X' \to X$ is a small crepant resolution, and $g \colon X \to Y'$ is a quasi-\'etale cover of $Y'$ with $d \coloneqq \deg(g) = |\pi_1(Y)| \geq 2$.
        Now, set $N' \coloneqq f_*(N)$ and consider the pullback 
        \[ 
            (g\circ h)^*(\alpha') = (g\circ h)^*(P') + (g\circ h)^*(N') \eqqcolon D .
        \]
        Clearly, $(g\circ h)^*(P')$ is nef.
        Since the volume of $\R$-Cartier $\R$-divisors is invariant under small $\Q$-factorial modifications, by \eqref{eq:3_volume_NZD} we obtain $\vol_{Y'}(P') = \vol_{Y'}(\alpha')$, and together with \cite[Proposition 2.2.43]{Laz04book_I} and \cite[Proposition 2.9(1)]{Kuronya06}, we infer that 
        \[ 
             \vol_{X'} \big( (g\circ h)^*(P') \big) = d \cdot \vol_{Y'}(P') = d \cdot \vol_{Y'}(\alpha') = \vol_{X'}(D) . 
        \] 
        Consequently, $(g\circ h)^*(P')$ (resp.\ $(g\circ h)^*(N')$) is the positive (resp.\ negative) part of the Nakayama--Zariski decomposition of $D$ by \cite[Remark 2.3]{DNFT17}. In particular, $D$ is not movable, and thus there exists an irreducible component $E$ of $(g\circ h)^*(N')$ such that $q_{X'}(D,E)<0$. Note that the prime divisor $E$ on the projective IHS manifold $X'$ is exceptional, as $q_{X'}(E) < 0$.
        Hence, by \cite[Proposition 3.1 and Corollary 3.6(1)]{Markman13} and \cite[Theorem 1]{Kaw91}, 
        the ray in $\NEb(X')$ spanned by
        \[ 
            [E]^{\vee} \coloneqq -2 \frac{q_{X'}(E,-)}{q_{X'}(E)} \in H_2(X', \Z)
        \]
        contains the class of a rational curve $C \subset X'$, which moves in a family sweeping out $E$ and satisfies $E \cdot C < 0$. In particular, $C$ cannot be contracted by $h$, 
        and $[C] \in \Mob_{2n-1}(X')$. Moreover, we have 
        \begin{equation}
            \label{eq:4_negative_intersection}
            D \cdot C = [E]^{\vee} (D) = -2 \frac{q_{X'}(E,D)}{q_{X'}(E)} < 0 . 
        \end{equation}
        Consider now the rational curve $C' = (g \circ h) (C)$ on $Y'$. Since $C'$ moves in a family sweeping out the prime divisor $(g\circ h)_* E$ in $Y'$, we have $[C'] \in \Mob_{2n-1}(Y')$, which yields $f^*_\mathrm{num} \big( [C] \big) \in \bMob_{2n-1}(Y)$. Since $\alpha' \cdot C' < 0$ by \eqref{eq:4_negative_intersection} and by the projection formula, by \eqref{eq:numerical_pullback_of_curves} we obtain $\alpha \cdot f^*_\mathrm{num} \big( [C] \big) < 0$, which contradicts our assumption that $\alpha \in \bMob_{2n-1}(Y)^{\vee}$ and establishes thus \eqref{eq:2_bMob_in_Movb}.

        \medskip
        
        \emph{Step 2b}: In this step we show \eqref{eq:interiors}, completing thus the proof.
        
        Arguing by contradiction, we suppose that $D$ is a big $\R$-divisor on $Y$ whose class lies in $\operatorname{int} \big( \bMob_k(Y)^{\vee} \big)$, but not in $\Amp^k(Y)$. Then there exists an irreducible component $V$ of $\abs(D)$ such that $\dim(V) \geq k$. As $[D] \in \operatorname{int} \big( \Movb(Y) \big)$ by \eqref{eq:2_bMob_in_Movb}, we can run a $(K_Y+D)$-MMP with scaling of an ample divisor $\psi\colon Y\dashrightarrow Y''$ such that $Y''$ is a terminal and $\Q$-factorial primitive Enriques variety over which the strict transform $V''$ of $V$ can be contracted; see \cite[Lemma 4.1]{TX25} and the proof of \cite[Theorem A(ii)]{TX25}. In particular, there exists a class $\gamma \in \NE(Y)$ such that $\psi_*(\gamma)$ is represented by an irreducible curve that moves in a family sweeping out $V''$, which satisfies $\psi_* \big( [D] \big) \cdot \psi_*(\gamma) \leq 0$.
        But then $\gamma \in \bMob_k(Y)$ by construction and $[D] \cdot \gamma \leq 0$ by \eqref{eq:numerical_pullback_of_curves}, which contradicts $[D] \in \operatorname{int} \big( \bMob_k(Y)^{\vee} \big)$.
    \end{proof}

    
	\bibliographystyle{amsalpha}
	\bibliography{BibliographyForPapers}

\newcommand{\etalchar}[1]{$^{#1}$}
\providecommand{\bysame}{\leavevmode\hbox to3em{\hrulefill}\thinspace}
\providecommand{\MR}{\relax\ifhmode\unskip\space\fi MR }
\providecommand{\MRhref}[2]{%
  \href{http://www.ams.org/mathscinet-getitem?mr=#1}{#2}
}
\providecommand{\href}[2]{#2}
\begin{thebibliography}{BGMM25}

\bibitem[Ane21]{Anella21b}
F.~Anella, \emph{Rational curves on genus-one fibrations}, Manuscripta Math. \textbf{166} (2021), no.~3-4, 489--502.

\bibitem[Ara10]{Araujo10}
C.~Araujo, \emph{The cone of pseudo-effective divisors of log varieties after {B}atyrev}, Math. Z. \textbf{264} (2010), no.~1, 179--193.

\bibitem[AS08]{AS08}
M.~Artebani and A.~Sarti, \emph{Non-symplectic automorphisms of order 3 on {$K3$} surfaces}, Math. Ann. \textbf{342} (2008), no.~4, 903--921.

\bibitem[AST11]{AST11}
M.~Artebani, A.~Sarti, and S.~Taki, \emph{{$K3$} surfaces with non-symplectic automorphisms of prime order}, Math. Z. \textbf{268} (2011), no.~1-2, 507--533.

\bibitem[BBP13]{BBP13}
S.~Boucksom, A.~Broustet, and G.~Pacienza, \emph{Uniruledness of stable base loci of adjoint linear systems via {M}ori theory}, Math. Z. \textbf{275} (2013), no.~1-2, 499--507.

\bibitem[BCG{\etalchar{+}}26]{BCGPSV26}
E.~Brakkee, C.~Camere, A.~Grossi, L.~Pertusi, G.~Sacc\`a, and S.~Viktorova, \emph{Irreducible symplectic varieties via relative {P}rym varieties}, Adv. Math. \textbf{490} (2026), Paper No. 110826.

\bibitem[BCHM10]{BCHM10}
C.~Birkar, P.~Cascini, C.~D. Hacon, and J.~M\textsuperscript{c}Kernan, \emph{Existence of minimal models for varieties of log general type}, J. Amer. Math. Soc. \textbf{23} (2010), no.~2, 405--468.

\bibitem[BCS24]{BCS24}
S.~Boissi\`ere, C.~Camere, and A.~Sarti, \emph{Logarithmic {E}nriques varieties}, arXiv:2409.09160\setbox0=\hbox{2024}.

\bibitem[BDPP13]{BDPP13}
S.~Boucksom, J.-P. Demailly, M.~P{\u{a}}un, and Th. Peternell, \emph{The pseudo-effective cone of a compact {K}\"ahler manifold and varieties of negative {K}odaira dimension}, J. Algebraic Geom. \textbf{22} (2013), no.~2, 201--248.

\bibitem[Bea83a]{Beauville83b}
A.~Beauville, \emph{Some remarks on {K}\"{a}hler manifolds with {$c\sb{1}=0$}}, Classification of algebraic and analytic manifolds ({K}atata, 1982), Progr. Math., vol.~39, Birkh\"{a}user Boston, Boston, MA, 1983, pp.~1--26.

\bibitem[Bea83b]{Beauville83a}
\bysame, \emph{Vari\'{e}t\'{e}s {K}\"{a}hleriennes dont la premi\`ere classe de {C}hern est nulle}, J. Differential Geom. \textbf{18} (1983), no.~4, 755--782.

\bibitem[Bea00]{Beauville00a}
\bysame, \emph{Symplectic singularities}, Invent. Math. \textbf{139} (2000), no.~3, 541--549.

\bibitem[Bea11]{Beauville11}
\bysame, \emph{Antisymplectic involutions of holomorphic symplectic manifolds}, J. Topol. \textbf{4} (2011), no.~2, 300--304.

\bibitem[BGMM25]{BGMM25}
V.~Bertini, A.~Grossi, M.~Mauri, and E.~Mazzon, \emph{Terminalizations of quotients of compact hyperk\"{a}hler manifolds by induced symplectic automorphisms}, \'Epijournal G\'eom. Alg\'ebrique \textbf{9} (2025), Article No. 14.

\bibitem[BH23]{BrandhorstHofmann23}
S.~Brandhorst and T.~Hofmann, \emph{Finite subgroups of automorphisms of {$K3$} surfaces}, Forum Math. Sigma \textbf{11} (2023), Paper No. e54.

\bibitem[BHS25]{BHS25}
S.~Boissi\`ere, T.~Heckel, and A.~Sarti, \emph{The {F}ano variety of lines of a cuspidal cyclic cubic fourfold}, Ann. Sc. Norm. Super. Pisa Cl. Sci. (5) \textbf{26} (2025), no.~4, 1929--1946.

\bibitem[Bir12]{Bir12a}
C.~Birkar, \emph{Existence of log canonical flips and a special {LMMP}}, Publ. Math. Inst. Hautes \'Etudes Sci. \textbf{115} (2012), no.~1, 325--368.

\bibitem[BKS04]{BKS04}
Th. Bauer, A.~K{\"u}ronya, and T.~Szemberg, \emph{Zariski chambers, volumes, and stable base loci}, J. Reine Angew. Math. \textbf{576} (2004), 209--233.

\bibitem[BL21]{BL21}
B.~Bakker and C.~Lehn, \emph{A global {T}orelli theorem for singular symplectic varieties}, J. Eur. Math. Soc. (JEMS) \textbf{23} (2021), no.~3, 949--994.

\bibitem[BL22]{BL22}
\bysame, \emph{The global moduli theory of symplectic varieties}, J. Reine Angew. Math. \textbf{790} (2022), 223--265.

\bibitem[BNWS11]{BNWS11}
S.~Boissi\`ere, M.~Nieper-Wi{\ss}kirchen, and A.~Sarti, \emph{Higher dimensional {E}nriques varieties and automorphisms of generalized {K}ummer varieties}, J. Math. Pures Appl. (9) \textbf{95} (2011), no.~5, 553--563.

\bibitem[Bou01]{Boucksom01}
S.~Boucksom, \emph{Le c\^{o}ne k\"{a}hl\'{e}rien d'une vari\'{e}t\'{e} hyperk\"{a}hl\'{e}rienne}, C. R. Acad. Sci. Paris S\'{e}r. I Math. \textbf{333} (2001), no.~10, 935--938.

\bibitem[Bou04]{Boucksom04}
\bysame, \emph{Divisorial {Z}ariski decompositions on compact complex manifolds}, Ann. Sci. \'{E}cole Norm. Sup. (4) \textbf{37} (2004), no.~1, 45--76.

\bibitem[Cam91]{Campana91}
F.~Campana, \emph{On twistor spaces of the class {$\mathscr C$}}, J. Differential Geom. \textbf{33} (1991), no.~2, 541--549.

\bibitem[Cat23]{Catanese2023}
F.~Catanese, \emph{Kummer quartic surfaces, strict self-duality, and more}, The art of doing algebraic geometry, Trends Math., Birkh\"auser/Springer, Cham, 2023, pp.~55--92.

\bibitem[CGM19]{CGM19}
C.~Camere, A.~Garbagnati, and G.~Mongardi, \emph{Calabi--{Y}au quotients of hyperk\"ahler four-folds}, Canad. J. Math. \textbf{71} (2019), no.~1, 45--92.

\bibitem[CT23]{ChenTsak23}
G.~Chen and N.~Tsakanikas, \emph{On the termination of flips for log canonical generalized pairs}, Acta Math. Sin. (Engl. Ser.) \textbf{39} (2023), no.~6, 967--994.

\bibitem[Deb01]{Debarre01book}
O.~Debarre, \emph{Higher-dimensional algebraic geometry}, Universitext, Springer-Verlag, New York, 2001.

\bibitem[Den23]{Denisi23}
F.~A. Denisi, \emph{Boucksom-{Z}ariski and {W}eyl chambers on irreducible holomorphic symplectic manifolds}, Int. Math. Res. Not. IMRN (2023), no.~20, 17156--17204.

\bibitem[DNFT17]{DNFT17}
E.~Di~Nezza, E.~Floris, and S.~Trapani, \emph{Divisorial {Z}ariski decomposition and some properties of full mass currents}, Ann. Sc. Norm. Super. Pisa Cl. Sci. (5) \textbf{17} (2017), no.~4, 1383--1396.

\bibitem[DRO25]{DRO25}
F.~A. Denisi and {\'A}.~D. R{\'\i}os~Ortiz, \emph{Asymptotic base loci on hyper-{K}{\"a}hler manifolds}, Commun. Contemp. Math. \textbf{27} (2025), no.~4, Paper No. 2450026.

\bibitem[Dru18]{Druel18}
S.~Druel, \emph{A decomposition theorem for singular spaces with trivial canonical class of dimension at most five}, Invent. Math. \textbf{211} (2018), no.~1, 245--296.

\bibitem[ELM{\etalchar{+}}06]{ELMNP06}
L.~Ein, R.~Lazarsfeld, M.~Musta{\c{t}}{\u{a}}, M.~Nakamaye, and M.~Popa, \emph{Asymptotic invariants of base loci}, Ann. Inst. Fourier \textbf{56} (2006), no.~6, 1701--1734.

\bibitem[ELM{\etalchar{+}}09]{ELMNP09}
\bysame, \emph{Restricted volumes and base loci of linear series}, Amer. J. Math. \textbf{131} (2009), no.~3, 607--651.

\bibitem[Fu06]{Fu06}
B.~Fu, \emph{A survey on symplectic singularities and symplectic resolutions}, Ann. Math. Blaise Pascal \textbf{13} (2006), no.~2, 209--236.

\bibitem[Fuj83]{Fujiki83b}
A.~Fujiki, \emph{On primitively symplectic compact {K}\"{a}hler {$V$}-manifolds of dimension four}, Classification of algebraic and analytic manifolds ({K}atata, 1982), Progr. Math., vol.~39, Birkh\"{a}user Boston, Boston, MA, 1983, pp.~71--250.

\bibitem[Fuj17]{Fuj17book}
O.~Fujino, \emph{Foundations of the minimal model program}, MSJ Memoirs, vol.~35, Mathematical Society of Japan, Tokyo, 2017.

\bibitem[GGK19]{GGK19}
D.~Greb, H.~Guenancia, and S.~Kebekus, \emph{Klt varieties with trivial canonical class: holonomy, differential forms, and fundamental groups}, Geom. Topol. \textbf{23} (2019), no.~4, 2051--2124.

\bibitem[GHJ03]{GHJ03book}
M.~Gross, D.~Huybrechts, and D.~Joyce, \emph{Calabi--{Y}au manifolds and related geometries}, Universitext, Springer-Verlag, Berlin, 2003.

\bibitem[GKP16a]{GKP16a}
D.~Greb, S.~Kebekus, and Th. Peternell, \emph{{\'E}tale fundamental groups of {K}awamata log terminal spaces, flat sheaves, and quotients of abelian varieties}, Duke Math. J. \textbf{165} (2016), no.~10, 1965--2004.

\bibitem[GKP16b]{GKP16b}
\bysame, \emph{Singular spaces with trivial canonical class}, Minimal models and extremal rays ({K}yoto, 2011), Adv. Stud. Pure Math., vol.~70, Mathematical Society of Japan, Tokyo, 2016, pp.~67--113.

\bibitem[Gom25]{Gomez25}
T.~Gomez, \emph{The stable and augmented base locus under finite morphisms}, Manuscripta Math. \textbf{176} (2025), no.~4, Paper No. 43.

\bibitem[GPP24]{GPP24}
A.~Garbagnati, M.~Penegini, and A.~Perego, \emph{Singular symplectic surfaces}, arXiv:2407.21173\setbox0=\hbox{2024}.

\bibitem[Gra18]{Graf18}
P.~Graf, \emph{Algebraic approximation of {K}{\"a}hler threefolds of {K}odaira dimension zero}, Math. Ann. \textbf{371} (2018), no.~1, 487--516.

\bibitem[HH20]{HH20}
K.~Hashizume and Z.-Y. Hu, \emph{On minimal model theory for log abundant lc pairs}, J. Reine Angew. Math. \textbf{767} (2020), 109--159.

\bibitem[HP19]{HoerPet19}
A.~H\"{o}ring and Th. Peternell, \emph{Algebraic integrability of foliations with numerically trivial canonical bundle}, Invent. Math. \textbf{216} (2019), no.~2, 395--419.

\bibitem[HT10]{HT10b}
B.~Hassett and Y.~Tschinkel, \emph{Intersection numbers of extremal rays on holomorphic symplectic varieties}, Asian J. Math. \textbf{14} (2010), no.~3, 303--322.

\bibitem[Huy03]{Huy03}
D.~Huybrechts, \emph{The {K}\"{a}hler cone of a compact hyperk\"{a}hler manifold}, Math. Ann. \textbf{326} (2003), no.~3, 499--513.

\bibitem[Kaw91]{Kaw91}
Y.~Kawamata, \emph{On the length of an extremal rational curve}, Invent. Math. \textbf{105} (1991), no.~3, 609--611.

\bibitem[Keb13]{Kebekus13a}
S.~Kebekus, \emph{Pull-back morphisms for reflexive differential forms}, Adv. Math. \textbf{245} (2013), 78--112.

\bibitem[KM98]{KM98}
J.~Koll{\'a}r and S.~Mori, \emph{Birational geometry of algebraic varieties}, Cambridge Tracts in Math., vol. 134, Cambridge University Press, Cambridge, 1998.

\bibitem[K{\"u}r06]{Kuronya06}
A.~K{\"u}ronya, \emph{Asymptotic cohomological functions on projective varieties}, Amer. J. Math. \textbf{128} (2006), no.~6, 1475--1519.

\bibitem[Laz04]{Laz04book_I}
R.~Lazarsfeld, \emph{Positivity in algebraic geometry. {I}}, Ergeb. Math. Grenzgeb. (3), vol.~48, Springer-Verlag, Berlin, 2004.

\bibitem[Liu02]{Liu02book}
Q.~Liu, \emph{Algebraic geometry and arithmetic curves}, Oxford Graduate Texts in Mathematics, vol.~6, Oxford University Press, Oxford, 2002.

\bibitem[LMP{\etalchar{+}}23]{LMPTX23}
V.~Lazi\'c, S.-i. Matsumura, Th. Peternell, N.~Tsakanikas, and Z.~Xie, \emph{The nonvanishing problem for varieties with nef anticanonical bundle}, Doc. Math. \textbf{28} (2023), no.~6, 1393--1440.

\bibitem[LP16]{LehnPac16}
C.~Lehn and G.~Pacienza, \emph{Deformations of singular symplectic varieties and termination of the log minimal model program}, Algebr. Geom. \textbf{3} (2016), no.~4, 392--406.

\bibitem[Mar13]{Markman13}
E.~Markman, \emph{Prime exceptional divisors on holomorphic symplectic varieties and monodromy reflections}, Kyoto J. Math. \textbf{53} (2013), no.~2, 345--403.

\bibitem[Mat02]{Mat02}
K.~Matsuki, \emph{Introduction to the {M}ori {P}rogram}, Universitext, Springer--Verlag, New York, 2002.

\bibitem[MO67]{MO67}
H.~Matsumura and F.~Oort, \emph{Representability of group functors, and automorphisms of algebraic schemes}, Invent. Math. \textbf{4} (1967), 1--25.

\bibitem[MZ13]{MZ13}
D.~Matsushita and D.-Q. Zhang, \emph{Zariski {$F$}-decomposition and {L}agrangian fibration on hyperk\"{a}hler manifolds}, Math. Res. Lett. \textbf{20} (2013), no.~5, 951--959.

\bibitem[Nak04]{Nak04book}
N.~Nakayama, \emph{Zariski-decomposition and abundance}, MSJ Memoirs, vol.~14, Mathematical Society of Japan, Tokyo, 2004.

\bibitem[Nam01]{Nam01b}
Y.~Namikawa, \emph{Extension of 2-forms and symplectic varieties}, J. Reine Angew. Math. \textbf{539} (2001), 123--147.

\bibitem[Nam06]{Nam06}
\bysame, \emph{On deformations of {$\mathbb{Q}$}-factorial symplectic varieties}, J. Reine Angew. Math. \textbf{599} (2006), 97--110.

\bibitem[OS11a]{OS11a}
K.~Oguiso and S.~Schr\"{o}er, \emph{Enriques manifolds}, J. Reine Angew. Math. \textbf{661} (2011), 215--235.

\bibitem[OS11b]{OS11b}
\bysame, \emph{Periods of {E}nriques manifolds}, Pure Appl. Math. Q. \textbf{7} (2011), no.~4, 1631--1656.

\bibitem[Pay06]{Payne06}
S.~Payne, \emph{Stable base loci, movable curves, and small modifications, for toric varieties}, Math. Z. \textbf{253} (2006), no.~2, 421--431.

\bibitem[Per20]{Perego20}
A.~Perego, \emph{Examples of irreducible symplectic varieties}, Birational geometry and moduli spaces, Springer INdAM Ser., vol.~39, Springer, Cham, 2020, pp.~151--172.

\bibitem[Pro21]{Pro21}
Yu.~G. Prokhorov, \emph{Equivariant minimal model program}, Uspekhi Mat. Nauk \textbf{76} (2021), no.~3(459), 93--182.

\bibitem[Rei80]{Reid80}
M.~Reid, \emph{Canonical 3-folds}, Journ\'ees de {G}\'eom\'etrie {A}lg\'ebrique d'Angers (A.~Beauville, ed.), Sijthoof and Nordhoof, 1980, pp.~273--310.

\bibitem[Sch20]{Schwald20}
M.~Schwald, \emph{Fujiki relations and fibrations of irreducible symplectic varieties}, \'Epijournal G\'eom. Alg\'ebrique \textbf{4} (2020), Article No. 7.

\bibitem[Sch22]{Schwald22}
\bysame, \emph{On the definition of irreducible holomorphic symplectic manifolds and their singular analogs}, Int. Math. Res. Not. IMRN (2022), no.~15, 11864--11877.

\bibitem[{Sta}ck]{Stacks}
{Stacks Project Authors}, \emph{{S}tacks {P}roject}, \verb|http://stacks.math.columbia.edu|\setbox0=\hbox{ck}.

\bibitem[Tak03]{Tak03}
S.~Takayama, \emph{Local simple connectedness of resolutions of log-terminal singularities}, Internat. J. Math. \textbf{14} (2003), no.~8, 825--836.

\bibitem[TX24]{TX24}
N.~Tsakanikas and L.~Xie, \emph{Remarks on the existence of minimal models of log canonical generalized pairs}, Math. Z. \textbf{307} (2024), Article No. 20.

\bibitem[TX25]{TX25}
N.~Tsakanikas and Z.~Xie, \emph{Comparison and uniruledness of asymptotic base loci}, Michigan Math. J. Advance Publication (2025), 1--30.

\bibitem[Voi07]{Voisin07book_I}
C.~Voisin, \emph{Hodge theory and complex algebraic geometry. {I}}, {E}nglish ed., Cambridge Studies in Advanced Mathematics, vol.~76, Cambridge University Press, Cambridge, 2007.

\bibitem[Zha91]{Zhang91}
D.-Q. Zhang, \emph{Logarithmic {E}nriques surfaces}, J. Math. Kyoto Univ. \textbf{31} (1991), no.~2, 419--466.

\bibitem[Zha93]{Zhang93}
\bysame, \emph{Logarithmic {E}nriques surfaces. {II}}, J. Math. Kyoto Univ. \textbf{33} (1993), no.~2, 357--397.

\end{thebibliography}
 
\end{document}